\def\cal{\mathcal}
\newtheorem{theorem}[subsection]{Theorem}
\newtheorem{proposition}[subsection]{Proposition}
\newtheorem{corollary}[subsection]{Corollary}
\newtheorem{lemma}[subsection]{Lemma}
\newtheorem{definition}[subsubsection]{Definition}
\newtheorem{remark}[subsubsection]{Remark}
\newtheorem{remarks}[subsubsection]{Remarks}
\def\ep{\noindent{\hfill $\Box$}}
\newcommand{\Comment}[1]{{\small\em
#1}{\color{red}}} 
\renewcommand{\Comment}[1]{}
\begin{document}
\numberwithin{equation}{subsection}

%\runningheads{ Mary Rees}{Persistent Markov partitions}  
\begin{center}
\title{Persistent Markov partitions for rational maps}
\author{Mary Rees}
\address{ Dept. of Math. Sci., University of Liverpool, Mathematics Building, Peach St., Liverpool L69 7ZL, U.K.,}
\email {maryrees@liv.ac.uk}
\end{center}
\maketitle

\begin{abstract}
A construction is given of  Markov partitions for some rational maps, which persist over regions of parameter space, not confined to single hyperbolic components. The set on which the Markov partition exists, and its boundary, are analysed.
\end{abstract}

\Comment{\color{blue} {It is difficult to know where to put this acknowledgement and likely to be premature in any case. But I would like to thank the referee(s) and editor(s) for their assistance and extraordinary patience during the preparation of this paper.

In view of the number of comments, I am using this annotated version of the resubmission as a covering letter. My responses to comments are in red throughout the first section of this paper. I very much regret that I have not had time to look at later sections at all. Section 1 has simply taken all my time.
 In the process of responding to comments, some new lemmas have been introduced, namely \ref{1.18}, \ref{1.17}, \ref{1.19}. In addition an important new argument has been added in at the end of \ref{1.9}. \ref{1.18} is part of the explanation of an argument used towards the main construction in \ref{1.10}. \ref{1.17} and \ref{1.19} lead to some simplifications, but were really  introduced to correct obscurities -- probably errors -- in the main construction.}}

\section{Construction of partitions}\label{1}

The first result of this paper is a construction of Markov partitions for some rational maps, including some non-hyperbolic rational maps  (Theorem \ref{1.1}). Of course, results of this type have been around for many decades. We comment on this below. There is considerable freedom in the construction. In particular, the construction can be made so that the partition varies isotopically to a partition for all maps in a sufficiently small neighbourhood of the original one (Lemma \ref{2.1}). So the partition is not specific, like the Yoccoz puzzle, and also less specific than other partitions which have been developed to exploit the ideas on analysis of dynamical planes and parameter space, which were pioneered using the Yoccoz puzzle. We then investigate the boundary of the set of rational maps for which the partition exists in section \ref{2}, in particular in Theorem \ref{2.2}. We also explore the set in which the partition does exist, in section \ref{3}, in particular in Theorem \ref{3.2}. We show how parameter space is partitioned, using a partition which is related to the Markov partitions of dynamical planes --- in much the usual manner --- and show that all the sets in the partition are nonempty. We are able to apply some of the results of \cite{Roesch} in our setting, in particular in the analysis of dynamical planes. The main tool used in the results about the partitioning the subset of parameter space admitting a fixed Markov partition is the $\lambda $-lemma \cite{M-S-S}.

It is natural to start our study with hyperbolic rational maps. For some integer $N$ which depends on $f$, the iterate $f^N$ of a hyperbolic map $f$ is expanding on the Julia set $J=J(f)$ with respect to the spherical metric. 
The full expanding property does not hold for a parabolic rational map on its Julia set, but a minor adjustment of it does. Given any closed subset of the Julia set disjoint from the parabolic orbits, the map $f^N$ is still expanding with respect to the spherical metric, for a suitable $N$.

We shall use the spherical metric throughout the proof, unless otherwise stated, when referring to distance or diameter. Also,  $d(\cdot ,\cdot )$ is the spherical metric, unless otherwise stated. Similarly, expansion and magnitude of derivative are with respect to the spherical metric. 

\Comment{\color{red}{This has been added in here, and consequently references to the spherical metric have been removed at various places elsewhere.}}

Throughout this paper, an {\em{arc}} is a homeomorphic image of $[0,1]$. The {\em{endpoints}} of the arc are the images of $0$ and $1$. An {\em{open arc }}  is a homeomorphic image of $(0,1)$. 

\Comment{\color{red} {Again this has been added in here -- recommended  in Comment (35)}}

 We shall use the following definition of Markov partition for a rational map $f:\overline{\mathbb C}\to \overline{\mathbb C}$.

\begin{definition}
A {\em{Markov partition for $f$}} is a set ${\cal{P}}=\{ P_1,\cdots P_n\} $ such that:
\begin{itemize}
\item $\overline{{\rm{int}}(P_i)}=P_i$;
\item $P_i$ and $P_j$ have disjoint interiors if $i\ne j$;
\item $\bigcup _{i=1}^nP_i=\overline{\mathbb C}$;
\item each $P_i$ is a union of connected components of $f^{-1}(P_j)$ for varying $j$. 
\end{itemize}
\end{definition}

Our first theorem applies to a familiar ``easy'' class of rational maps. In particular, we assume that every critical orbit is attracted to an attractive or parabolic periodic orbit. The most important property of the Markov partitions yielded by this theorem, however, is that the set of rational maps for which they exist is open -- and if this open set contains a rational map with at least one parabolic periodic point, the open set is not contained in a single hyperbolic component.
 
\begin{theorem}\label{1.1}  Let $f:\overline{\mathbb C}\to \overline{\mathbb C}$ be a rational map such that every critical point is in the Fatou set, and such that the closure of any Fatou component is a closed topological disc, and all of these are disjoint. Let $F_0$ be the union of the periodic Fatou components. Let $Z$ be a finite forward invariant set which includes all parabolic points. Let $G_0\subset \overline{\mathbb C}$ be a graph  such that the following hold. 
\begin{itemize}
\item $G_0$ is connected and has finitely many vertices and edges.
\item $G_0$ is piecewise $C^1$, that is, the closure of each edge is a piecewise $C^1$ arc.

\item  All components of $\overline{\mathbb C}\setminus G_0$ are topological discs, as are the closures of these components.
\item $G_0\cap (F_0\cup Z)=\emptyset $, any component of $\overline{\mathbb C}\setminus G_0$ contains at most  one component of $F_0\cup Z$. 
\item $G_0$ is trivalent, that is, exactly three edges meet at each vertex.
\item The closures of any two components of\  \ $\overline{\mathbb C}\setminus G_0$ intersect in at most a single component, which, if it exists, must be either an edge together with the endpoints of this edge, or a single vertex, by the previous conditions. 
\end{itemize}

Then there exists $G'\subset \overline{\mathbb C}\setminus (F_0\cup Z)$ which is ambient isotopic to $G_0$ such that $G'\subset f^{-N}(G')$ for some $N$. Moreover, given any $\varepsilon >0$, by choosing $N$ sufficiently large,   the isotopy of $\overline{\mathbb C}$ which maps $G_0$ to $G'$ can be chosen to be a composition of one isotopy which is the identity outside finitely many components of  the Fatou set, and a second which maps  all points a spherical distance $\le \varepsilon $, and with both isotopies fixing  $F_0\cup Z$. 
\Comment{\color{red}{Comment (1)``which is ambient'' added before ``isotopic''. I think that any two isotopic  finite graphs in a
compact surface --- such as $\overline{\mathbb C}$ --- are also ambient isotopic. But I have not yet found a reference for this. 

Comment (2)The phrase ``in $\overline{\mathbb C}\setminus (F_0\cup Z)$has been removed and replaced by ``and to fix $F_0\cup Z$'' at the end. ``$G$ and'' has been removed. The property of the isotopy has been weakened from moving all points a spherical distance $\le \varepsilon$, to compensate for removing the property of $G_0$ of having at most one component of intersection with each component of the Fatou set.}}

Moreover, there is a  connected graph $G$
 with finitely many vertices and edges, with $G\subset f^{-1}(G)$, and such that the following hold, where $\varepsilon '>0$ can be taken arbitrarily small given $\varepsilon $ and $\alpha _0$.

 \Comment{\color{red}{Comment (3) The full stop has been removed after $\varepsilon $ above. 
 
 $\delta _0$ has been renamed $\alpha _0$ as $\delta _0$ has been used many times with a different meaning in other places.
  
 The property that all vertices of $G$ are bivalent or trivalent has been removed. The proof certainly does not show this. Fortunately it does not seem to be necessary for the results in the later sections
 
 In the last property, $\varepsilon _1$ has been replaced by $\varepsilon '$.}}
 
 \begin{itemize}
 \item[1.] $G$ is in the $\varepsilon '$-neighbourhood of  $\cup _{i=0}^{N-1}f^{-i}(G')$.
 \item[2.] Any simple closed loop of $G'$ bounding a disc  of diameter $\ge \alpha _0$ on both sides is within $\varepsilon '$ of a closed loop of $G$. 
 
 \end{itemize}
 Hence ${\cal{P}}=\{\overline{U}:U\mbox{ is a component of }\overline{\mathbb C}\setminus G\} $ is a Markov partition for $f$, such that each set in the partition contains at most one periodic Fatou component.  \end{theorem}

For quite some time, I thought that there was no general result of this type in the literature, that is, no general result giving the existence of such a graph and related Markov partition for a map $f$ with expanding properties.  To some extent, this is true. One would expect to have a result of this type for smooth expanding maps of compact Riemannian manifolds,  for which the derivative has norm greater than one with respect to the Riemannian metric. I shall call such maps {\em{expanding local diffeomorphisms}}. Of course, an expanding map of a compact metric space is never invertible. Also, a rational map is never expanding on the whole Riemann sphere, unless one allows the metric to have singularities --- because of the critical points of the map. A hyperbolic rational map is an expanding local diffeomorphism on a neighbourhood of the Julia set, but such a neighbourhood is not forward invariant. The invertible analogue of expanding local diffeomorphisms is Axiom A diffeomorphisms. There is, of course, an extensive literature on these, dating from the 1960's and '70's.  The existence of  Markov partitions for Axiom A diffeomorphisms was proved by Rufus Bowen \cite{B1}, who developed the whole theory of describing invertible hyperbolic systems in terms of their symbolic dynamics in a remarkable series of papers. Bowen's results are in all dimensions. The construction of the sets in these Markov partitions is quite general, and the sets are not shown to have nice properties. In fact, results appear to be in the opposite direction: \cite{B2}, for example, showing that boundaries of Markov partitions of Anosov toral diffeomorphisms of the three-torus are never smooth  -- a relatively mild, but interesting pathology, which, in itself, has generated an extensive literature. 

The existence of Markov partitions for expanding maps of compact metric spaces appears as Theorem 4.5.2 in the recent book by Przytycki and Urbanski \cite{P-U}.  But there is no statement, there, about topological properties of the sets in the partition. I only learnt relatively recently (from Feliks Pzrytycki, among others) about the work of F.T. Farrell and L.E. Jones on expanding local diffeomorphisms, in particular about their result in dimension two \cite{F-J}. Their result is a  version of the statement in Theorem \ref{1.1} -- more general in some respects -- about an invariant graph $G$ for $f^N$ for a suitably large $N$. In the Farrell-Jones set-up, $f$ is an expanding local diffeomorphism of a compact two-manifold. Unaware of their result, the first version of this paper included my own proof of the theorem above -- which of course has different hypotheses from the Farrell-Jones result. Other such results have also been obtained relatively recently in other contexts, for example by Bonk and Meyer in \cite{B-M}, where Theorem 1.2 states that the $n$'th iterate $F=f^n$ of an expanding Thurston map $f$ admits an invariant Jordan curve, if $n$ is sufficiently large, and consequently, by Corollary 1.5, $F$ admits cellular Markov partitions of a certain type. My proof makes an assumption of conformality which is not in the Farrell-Jones result, and part of the proof of \ref{1.5} is rather different from that of Farrell-Jones. I also claimed a proof for $f$, rather than $f^N$. There is no such result in \cite{F-J}. If $G'\subset \overline{\mathbb C}$ is a graph  satisfying $G'\subset f^{-N}(G')$, then the set $G^0=\bigcup _{i=0}^{N-1}f^{-i}(G')$ satisfies $G^0\subset f^{-1}(G^0)$. But $G^0$ might not be a graph with finitely many edges and vertices. In the first version of this paper a proof was given that $G^0$ was, nevertheless, such a graph. However, on seeing the result, Mario Bonk and others warned that the method of proof did not appear to take account of counter-examples in similar contexts, and was likely to be flawed - as indeed it was. 

The statement now proved is not that $G^0$ itself is a graph with finitely many vertices and edges, but that there is such a graph $G$ with $G\subset f^{-1}(G)$, arbitrarily close to $G^0$ in the Hausdorff metric, and with closed loops arbitrarily close to closed loops in $G^0$ bounding discs of diameter bounded from $0$.

As a corollary of Theorem \ref{1.1}, we have the following.

\begin{corollary}\label{1.2} Let $f:\overline{\mathbb C}\to \overline{\mathbb C}$ be a rational map with connected Julia set $J$, such that the forward orbit of each critical point is attracted to an attractive or parabolic periodic orbit, and such that the closure of any Fatou component is a closed topological disc, and all of these are disjoint. Then there exists a  connected graph $G\subset \overline{\mathbb C}$ such that the following hold.
\begin{enumerate}
\item  $G\subset f^{-1}G$. 
\item $G$ does not intersect the closure  of any periodic Fatou component.
\item Any component of $\overline{\mathbb C}\setminus G$ contains at most one periodic Fatou  component of $f$.
  \end{enumerate}
 In particular, the set of closures of components of  $\overline{\mathbb C}\setminus G$ is a Markov partition for $f$. \end{corollary}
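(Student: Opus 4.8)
The plan is to read the corollary off from Theorem \ref{1.1} after choosing the auxiliary data carefully, together with one short forward-invariance argument for part (2); essentially all of the real work goes into building an admissible initial graph. Since the forward orbit of every critical point is attracted to an attractive or parabolic cycle, every critical point lies in the Fatou set, and the hypothesis that the closures of Fatou components are pairwise disjoint closed discs is given; as $J$ is connected every Fatou component is simply connected, hence a Jordan domain. Moreover $f$ has only finitely many parabolic cycles, and by Sullivan's theorem every Fatou component is eventually mapped onto one of the finitely many periodic ones. I would take $Z$ to be the union of the parabolic periodic orbits (a finite forward invariant set containing all parabolic points) and $F_0$ the union of the periodic Fatou components, so that $\overline{F_0}$ is a finite disjoint union of closed topological discs $\overline{W_i}$ and every component of $F_0\cup Z$ lies in one of these.

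Next I would construct $G_0$. Begin with a triangulation of $\overline{\mathbb C}$ in which the finitely many components of $F_0\cup Z$ lie in the interiors of distinct faces and miss the $1$-skeleton; in a simplicial decomposition any two faces already meet in at most one edge. Pass to the $1$-skeleton; split the high-valence vertices into trees of trivalent vertices, and, if more faces are wanted, refine some faces by inserting an arc between two new trivalent vertices that subdivide two boundary edges --- carrying out each move so as to preserve the property that the closures of two faces meet in at most one (closed) edge or one vertex; finally smooth the edges to be piecewise $C^1$ and put $G_0$ in general position with respect to $J$ so that it meets every Fatou component in at most one arc. The result is a connected piecewise $C^1$ trivalent graph disjoint from $\overline{F_0}\cup Z$ satisfying the four bullet hypotheses of Theorem \ref{1.1}; fix a neighbourhood $U$ of $G_0$ with $\overline U\cap\overline{F_0}=\emptyset$.

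Applying Theorem \ref{1.1} with these $Z$, $F_0$, $G_0$ and $U$ yields a graph $G'\subset U$ (hence $G'\cap\overline{F_0}=\emptyset$) and a finite connected graph $G\subset\bigcup_{i\ge0}f^{-i}(G')$ with $G\subset f^{-1}(G)$ for which the closures of the components of $\overline{\mathbb C}\setminus G$ form a Markov partition, each piece contains at most one periodic Fatou component, and each piece has quasi-circle boundary. This gives (1), (4), (5) and the last assertion verbatim, and (3) follows from (5): a component $V$ of $\overline{\mathbb C}\setminus G$ with Jordan-curve boundary lies in one of the two discs that curve bounds and is open, closed and nonempty in it, hence equals it. For (2): if $W$ is a periodic Fatou component then so is $f^i(W)$ and $f^i(\overline W)=\overline{f^i(W)}$, so $f^{-i}(G')$ can meet $\overline W$ only if $G'$ meets the closure of some periodic Fatou component; since $G'\cap\overline{F_0}=\emptyset$ this never happens, whence $G\cap\overline{F_0}=\emptyset$. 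For an arbitrary Fatou component $W$, choose $n$ with $W_i:=f^n(W)$ periodic; then $f^n(\overline W)=\overline{W_i}$, and since $G\subset f^{-1}(G)$ forces $f(G)\subset G$ we get $f^n(G\cap\overline W)\subset G\cap\overline{W_i}=\emptyset$, so $G\cap\overline W=\emptyset$. That is (2).

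The step I expect to be the genuine obstacle is the construction of $G_0$: reconciling trivalence, piecewise smoothness, disjointness from $\overline{F_0}\cup Z$, the ``at most one arc of intersection with each Fatou component'' condition, and --- most delicately --- the requirement that the closures of any two complementary faces meet in at most one closed edge or one vertex, all at once. Once a suitable $G_0$ is in hand, everything else is either a direct reading of Theorem \ref{1.1} or the short argument for (2) above.
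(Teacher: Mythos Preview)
Your argument is correct. The route differs from the paper's chiefly in how property (2) is obtained.

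The paper does not use the inclusion $G\subset\bigcup_{i\ge 0}f^{-i}(G')$ directly for (2); instead it invokes the combinatorial crossing bound (condition 1 in Theorem~\ref{1.1}). It chooses $G_0$ already disjoint from every Fatou component closure, then takes $U$ small and $N$ large enough that any path from $U$ to the closure of a periodic Fatou component must cross at least three components of $\overline{\mathbb C}\setminus f^{-N}(G')$; since every point of $G$ can be joined to $f^{-i}(G')$ through at most two such components, $G$ cannot reach $\overline{F_0}$. Your argument is more elementary: from $G'\subset U$ and $\overline U\cap\overline{F_0}=\emptyset$ you get $f^{-i}(G')\cap\overline{F_0}=\emptyset$ for all $i$ (periodic Fatou components map to periodic Fatou components), hence $G\cap\overline{F_0}=\emptyset$, and then $f(G)\subset G$ extends this to arbitrary Fatou components. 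You thereby bypass the crossing bound entirely and also handle non-periodic Fatou components explicitly, which the paper's short proof does not spell out. The paper's route would matter if one wanted a quantitative separation of $G$ from $\overline{F_0}$; for the bare disjointness asserted in (2), yours is cleaner.

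One minor remark: in your derivation of (3) from (5) you implicitly use that $\partial V$ is a single Jordan curve. This is exactly what Theorem~\ref{1.1} gives (the boundary of each partition piece is a quasi-circle), so the step is fine, but it is worth saying that connectedness of $\partial V$ is what is being used, since a priori a complementary component of a planar graph can have several boundary circles.
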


\noindent{\em{Proof.}} We can choose the graph $G_0$ of Theorem \ref{1.1} to satisfy the conditions of \ref{1.1} and also to not intersect the closure of any periodic Fatou component, and to separate periodic Fatou components. Theorem \ref{1.1} then gives Property 1 above.  By taking $\varepsilon >0$ sufficiently small, we can ensure that $G'$ also does not intersect the closure of any periodic Fatou component, and separates them. Then the same is true for $f^{-i}(G')$ for each 
$0\le i<N$. By choosing $\varepsilon '>0$ sufficiently small that $f^{-i}(G')$ does not intersect the $2\varepsilon '$-neighbourhood of any periodic Fatou component, Property 1 of \ref{1.1} gives Property 2 of this corollary. Then choosing $\delta _0$ such that every periodic Fatou component contains a disc of diameter $\ge \delta_0$, Property 2 of \ref{1.1} gives that every closed loop in $G'$ which separates some periodic Fatou component from all others is approximated by a similar loop in $G$. This gives Property 3 above.

\Comment{\color{red}{COMMENT
Correction made in response to referee's comment (4), yes, $\delta _0$ needs to give a lower bound on the diameter of discs in periodic Fatou components rather than an upper bound on diameter.

 $\varepsilon _1$ replaced by $\varepsilon '$.
 
More detail given on obtaining property 3.}}
\ep

The first step in the proof of \ref{1.1} is a lemma about the existence of subgraphs -- which, as already stated, parallels methods in Farrell-Jones, section 1 of \cite{F-J}. 

\begin{lemma}\label{1.3}
Let $f$, $F_0$, $Z$ and $G_0$ be as in \ref{1.1}. Let $\delta _1>0$ be given. Let $F(G_0)$ denote the union of $G_0$ and all sets $\overline{F}$ such that $F$ is a Fatou component intersected by $G_0$ of diameter $\ge \delta _1/2$. 
Then the following holds for $\delta $ sufficiently small given $\delta _1$. Let $\Gamma $ be another graph which has the same properties as $G_0$, and such that every component of $\overline{\mathbb C}\setminus \Gamma $ that has nontrivial intersection with the $2\delta _1$-neighbourhood of $F(G_0)$
 is either contained in the $\delta $-neighbourhood of some  Fatou component whose closure is  intersected by $G_0$, or has diameter $<\delta $. Then there is a subgraph $G_1$ of $\Gamma $ which is in the $\delta _1$-neighbourhood of $F(G_0)$, such that $G_1$ can be isotoped to $G_0$ by an isotopy $\varphi _t$ ($t\in [0,1]$) which is the identity outside this neighbourhood, and such that  the diameter of any set $\{\varphi _t(x):t\in [0,1]\}$ is $\le 2M_0$, where $M_0$ is the maximum diameter of any Fatou component.
\end{lemma}

\Comment{\color{red}{Following the comment (5) in the referee's report, the  occurrence of ``within a'' in line 5 of the statement of Lemma 1.3 has been replaced by ``that has nontrivial intersection with the'' 
    Then ``within the'' has been replaced by ``contained in the ''. The bound on the lengths of isotopy paths has been added.}}

\begin{remark} Many of the vertices of $G_1$ are likely to be bivalent rather than trivalent, but these are the only types which occur.
\end{remark}

\noindent{\em{Proof.}} \Comment{\color{red}{This proof has been quite extensively rewritten in response to comments (6) to (12). One important change is that the attempt to start by isotoping $G_0$ into the Julia set has been abandoned. As the referee rightly pointed out, this is incompatible with making the image of $G_0$ under the isotopy piecewise $C^1$. 
}}
 
 The idea of the proof is to choose the graph $G_1$ in the union of boundaries of components of $\overline{\mathbb C}\setminus \Gamma $ which intersect $G_0$. Care is needed at vertices of $G_0$ and at intersections of $G_0$ with Fatou components. 

We can assume without loss of generality that $\delta _1$ is sufficiently small that $4\delta _1$-neighbourhoods of vertices of $G_0$ are disjoint, and that any Fatou component which contains a vertex of $G_0$ has diameter $\ge 4\delta _1$.  Choose $\delta _2>0$ such that  the endpoints of any arc on $G_0$ of diameter $\ge \delta _1/2$ are distance $\ge 4\delta _2$ apart. Similarly choose $\delta _3>0$ and then $\delta _4>0$ such that  the endpoints of any arc on $G_0$ of  diameter $\ge \delta _i/2$ are distance $\ge 4\delta _{i+1}$ apart, and the distance between any two distinct Fatou components of diameter $\ge \delta _i/2$ is $\ge 4\delta _{i+1}$ for $i=2$, $3$.   So we have $\delta _{i+1}\le \delta _i/8$ for $i=1$, $2$, $3$. 

Let $\mathcal{C}_0$ be the set of components of intersection $\gamma $ of $G_0$ with the $\delta _3$-neighbourhood of some Fatou component of diameter $\ge \delta _2$ which also intersect the $\delta _3/2$ neighbourhood of this component, We will construct  an isotopy $\psi _t$ ($t\in [0,1/2]$)  which is the identity outside the $\delta _3/2$-neighbourhood of Fatou components of diameter $\ge \delta _2$ but moves all components of $\bigcup \mathcal{C}_0$ outside the $\delta $-neighbourhoods of these Fatou components, where $\delta $ is yet to be chosen.  We  will denote by $G_0'$ the image of $G_0$ under this isotopy. There is a bound $N_0$, where $N_0$ is bounded in terms of $\delta _4$ and $\delta _2$,  on the number of sets in $\mathcal{C}_0$ intersecting the $\delta _3/2$-neighbourhood  of some  Fatou component $V$ of diameter $\ge \delta _2$. By isotoping one set  $\gamma $ in  $\mathcal{C}_0$ intersecting $V$ at a time, we can ensure that for a suitable $\delta _j$ for $j\le N_0+4$, for $\gamma \in{\mathcal{C}}_0$  for some $3\le i\le N_0+2$,  the image of  $\gamma $ is contained in the $\delta _i$ neighbourhood of $V$  and is disjoint from the $2\delta _{i+1}$ neighbourhood and does not intersect the $\delta _{i+2}$-neighbourhood of  any Fatou component of diameter $\ge \delta _{i+1}$ and there is just one such  $\gamma $ for each $i$ and $V$. We call $G_{0,i}'$ the union, over all $V$, of images of these sets $\gamma $. We can also ensure that the $\delta _j$ are decreasing sufficiently fast that the endpoints of any arc of $G_{0,i}'$ of diameter $\ge \delta _{i+1}/2$ are distance $\ge 4\delta _{i+2}$ apart. In particular, $8\delta _{j+1}\le \delta _j$ for all $j\le N_0+3$.  So for each vertex $v$ of $G_{0,i}'$. there is an open neighbourhood $U_v$ of a connected  $G_v\subset G_{0,i}'$ containing  $v$ such that $U_v$ contains the $\delta _{i+2}$-neighbourhood of $G_v$, the points of $G_v\cap \partial U_v$ are distance $\le \delta _{i+1}$ and $\ge \delta _{i+1}/2$ from $v$ and the sets $U_v$ are all disjoint. The diameters of path $\{\psi _t(x):x\in [0,1/2]\} $ is $\le M_0+\delta _3$. 

Now we assume that $\delta <\delta _i/2$ for all $i\le N_0+4$. 

Now we isotope $G_0'$ to a subgraph of $\Gamma $. First we consider points of 
$$G_0'\setminus \bigcup _{3\le i\le N_0+2}G_{0,i}'.$$ We choose a finite subset $G_{0,1}''$ of 
$$(G_0'\setminus \bigcup _{3\le i\le N_0+2}G_{0,i}')\cap \Gamma$$
 such that if two points $x$ and $y$ in $G_{0,1}''$ are not separated in $G_0'$ by other points of $G_{0,1}''$ or by  $\bigcup _{3\le i\le N_0+2}G_{0,i}'$ then the following hold.
\begin{itemize}
\item There are components  $U_1$ and $U_2$ of $\overline{\mathbb C}\setminus \Gamma $  such that $\overline{U_1}\cap \overline{U_2}\ne \emptyset $ and $x\in\partial U_1$ and $y\in \partial U_2$.
\item If $U_1=U_2$, there is no other point of  $G_{0,1}''$ in $\partial U_1=\partial U_2$. If $U_1\ne U_2$ there is no other point of $G_{0,1}''$ in $\partial U_1\cup \partial U_2$ except when $x$ and $y$ are separated by a vertex  $v$ of $G_0'$. In this case, a third point $z$ is allowed in $\partial U_1\cup \partial U_2$ such that there is one of  $x$, $y$ and $z$  each of the arms of $G_0'$ attached to $v$. At most one of the three points $x$, $y$ and $z$ is in $\partial U_1\cap \partial U_2$. 
\end{itemize}
We  choose $G_{0,1}''$ by successively removing points from an initial  choice of the set, as necessary. Then the arc of $G_0'$ between two points of $G_{0,1}''$ which are not separated by other points of $G_{0,1}''$ or by $\bigcup _{i\ge 3}G_{0,i}'$ has diameter $\le \delta _1/2$. We can join two adjacent points $x$ and $y$ as above by an arc in $\partial U_1\cup \partial U_2\subset \Gamma$. Since $U_i $ has diameter $\le \delta _2+2\delta _3$, the diameter of $U_1\cup U_2$ is $<3\delta _2$, whether or not $U_1=U_2$.  So the arc of $\Gamma $ between $x$ and $y$ has diameter $<3\delta _2<\delta _1/2$. The arc in $G_0'$ between $x$ and $y$, which is also in $G_0$, has diameter $\le \delta _1/2$, and an isotopy between the arcs in $G_0$ and $\Gamma $ can be chosen to be the identity outside a $\delta _1$ neighbourhood of $G_0$. In the case when there is a vertex $v$ between $x$ and $y$ and hence there is a third point $z$ on the other edge of $G_0$, the arcs from $y$ and $z$ to $x$ in $\Gamma $, can be chosen to meet at a vertex of $\Gamma $ and continue on the same arc to $x$. In this case $x$, $y$ and $z$ can be in the closures of two or three distinct components of $\overline{\mathbb C}\setminus \Gamma $. 

Similarly we can find a set of points $G_{0,i}''\subset G_{0,i}'$ for $3\le i\le N_0+2$ such that similar properties hold for points $x$, $y\in G_{0,i}''$ in the same component of $G_{0,i}'$ which are not separated in $G_{0,i}'$ by any other points in $G_{0,i}''$. The components $U_1$ and $U_2$ of $\overline{\mathbb C}\setminus \Gamma $ containing $x$ and $y$ then have diameter $\le \delta _{i+1}+2\delta _{i+2}$, and so the diameter of $U_1\cup U_2$ is, once again, $<3\delta _i$, whether or not $U_1=U_2$. So the arc of $G_{0,i}'$ between $x$ and $y$ has diameter $<\delta _{i}/2$, and an isotopy from the arc  of  $G_{0,i}'$ between $x$ and $y$ to the arc of  $\Gamma $ can be chosen to be the identity outside the $\delta _i$-neighbourhood of $G_{0,i}'$. We construct the isotopy mapping $G_{0,i}'$ into $\Gamma $ on arcs between $x$ and $y$ containing a vertex of $G_0'$ in the same way as in the case $i=1$. 

So we can construct  an isotopy $\{ \xi _t:t\in [0,1/2]\} $ which maps $G_0'$ to $\Gamma $ and such that the diameter of any set $\{ \xi _t(x):t\in [0,1/2]\} $ is $\le \delta _1$. Combining this with the isotopy $\psi _t$ which maps $G_0$ to $G_0'$ and which is the identity outside the $\delta _3/2$-neighbourhood of the union of Fatou components of diameter $\ge \delta _2$, we obtain the required isotopy $\varphi _t$ ($t\in [0,1]$), which is the identity outside the $\delta _1$-neighbourhood of $F(G_0)$, and such that the diameter of any path $\varphi _t(x):t\in [0,1]\} $ is $\le M_0+\delta _3+\delta _1<2M_0$, assuming as we may do that  $\delta _1<M_0/2$.

\ep

We will prove Theorem \ref{1.1} using Lemma \ref{1.4}. A homeomorphism $h$ of $\overline{\mathbb C}$ is {\em{piecewise $C^1$}} if there is a partition of $\overline{\mathbb C}$ into sets with piecewise $C^1$ boundary (the boundary is a finite union of closed $C^1$ arcs) such that $h$ is $C^1$ restricted to each set in the partition. 

\begin{lemma}\label{1.4} Let $f$, $Z$, $G_0$ be as in \ref{1.1} to \ref{1.3}. As in \ref{1.3}, let $F(G_0)$ be the union of $G_0$ and the closures of any components of the Fatou set of $f$ which are intersected by $G_0$. Let $U$ be a closed  neighbourhood of $F(G_0)$ with $C^1$ boundary such that for some $M$,
the diameter of any component of  $\mbox{int}(F(G_0))$ is at most $M$ times the distance of any point on the boundary of the component to  $\partial U$. 

\Comment{\color{red}{Comment (13): missing space after comma added in. Comment (14): redundant $i$ removed }}
 
 Let $\varepsilon >0$ and $0<\lambda <1$ be given. Then for all sufficiently large $N$, depending on $G_0$, $M$, $\varepsilon $ and $\lambda $, there are a graph $G_1$, a neighbourhood $U_1$ of $G_1$, a constant $C_1$, a piecewise $C^1$ homeomorphism $h$ of $\overline{\mathbb C}$ and and $h$ such that the following hold.
 \begin{itemize}
 \item $G_1\subset f^{-N}(G_0)$ and $G_1$ is contained in the $\varepsilon $-neighbourhood of $F(G_0)$.
\item  $h$ is  the identity outside  $U$, is isotopic to the identity via an isotopy $\theta _t$ ($t\in [0,1]$), the diameter of $\{ \theta _t(x):t\in [0,1]\} $ is $\le C_1$ for all $x\in \overline{\mathbb C}$,  and $h(G_0)=G_1$.
\item $f^N(U_1)\subset U$, and $g=f^N\circ h$ is expanding on $h^{-1}(U_1)$, and $f^N$ is expanding on $U_1$  both with expansion constant $\ge \lambda ^{-1}$.
\end{itemize}
\end{lemma}

\Comment{\color{red}{ Statement modified to include existence and properties of $U_1$, and to include information about the diameter of the isotopy between the identity and $h_0$

Comment (14): redundant $U$ removed.

Comment (15), concerning the start of the proof. I think actually $B_1$ and $B_2$ do not need to be made more precise.The example given of $B_2$ being a periodic Fatou component and $S$ a local inverse of $f^n$ mapping $B_2$ to $f^i(B_2)$ for some $i\ge 0$ does not apply because such a local inverse cannot be univalent. }}

\noindent{\em{Proof.}}  If $N$ is sufficiently large given $\delta $, then every component of $f^{-N}(\overline{\mathbb C}\setminus G_0)$ either  has   diameter $<\delta $, or is within the $\delta $-neighbourhood of some Fatou component. This is simply because, if $B_1$ is any closed set, and $S$ is any univalent local inverse of $f^n$ defined on an open set $B_2$ containing $B_1$, then the diameter of $S(B_1)$ tends to zero uniformly with $n$, independent of $S$.  The proof is from Brolin \cite{Br}. If the diameter does not tend to $0$ then by Montel's Theorem there is an open neighbourhood $B_3$ of $B_1$ with $\overline{B_3}\subset B_2$ and a subsequence $S_{n_i}$ such that $S_{n_i}B_3$ converges to a set bounded from $0$ and such that $f^{n_i-n_{i-1}}\circ S_{n_i} =S_{n_{i-1}}$. This is only possible if $S_{n_i}B_3$ converges to a subset of the full orbit of a Siegel disk or Herman ring, neither of which exists, under our assumptions on $f$. In fact, our assumptions ensure that we can take $B_2$ to be any open set which is disjoint from the closures of the critical forward orbits. In particular, we can take $B_2$ to be a sufficiently small neighbourhood of the closure $B_1$ of any  component of $\overline{\mathbb C}\setminus G_0$ which does not contain a periodic Fatou component. We can also take $B_1$ to be any closed simply-connected set in $\overline{W_1\setminus W_2}$ for any component $W_1$ of  $\overline{\mathbb C}\setminus G_0$ and periodic Fatou component $W_2$ with $W_2\subset W_1$, and in the complement of a neighbourhood of the set of parabolic points. We now assume that $N=N_0k_0$ for some $N_0$ sufficiently large, and with $k_0$ sufficiently large given $N_0$, in senses to be specified later.
 It follows from the fact that $G_0$ satisfies the properties of \ref{1.3}, that $f^{-N_0}(G_0)$ satisfies the properties of $\Gamma $ of \ref{1.3} if $\delta $ is sufficiently small given $\delta _1$. So, for $\delta _1<\varepsilon /2$, we choose 
$$G_{1,N_0}\subset f^{-N_0}(G_0)\cap B_{\delta _1}(G_0)$$ 
as in \ref{1.3}. In particular, $G_{1,N_0}$ is isotopic to $G_0$, and the isotopy can be performed within a $\delta _1$-neighbourhood of $F(G_0)$. We assume that $\delta _1$ is sufficiently small that the $2\delta _1$- neighbourhood of $F(G_0)$ is  contained in $U$.  We assume without loss of generality that $U$ is  a tubular neigbourhood of $G_0$ with piecewise $C^1$ boundary.  We then define $G_{1,iN_0}$ for $2\le i\le k_0$, and an isotopy of $G_{1,(i-1)N_0}$ to $G_{1,iN_0}$ inductively by: $G_{1,iN_0}\subset f^{-N_0}(G_{1,(i-1)N_0})$ is the image of $G_{1,(i-1)N_0}$ for the isotopy which is the lift under $f^{N_0}$ of the isotopy between $G_{1,(i-2)N_0}$ and $G_{1,(i-1)N_0}$. Then $G_1=G_{1,N}=G_{1,k_0N_0}$.

 It remains to choose $U_1$ and the piecewise $C^1$ homeomorphism   $h$ of $\overline{\mathbb C}$ mapping $G_0$ to $G_1$ such that $h$ is a piecewise $C^1$ homeomorphism, and we have the required expanding properties of $f^N\circ h$. For this, it suffices to bound  the derivative of $h$ on $h^{-1}(U_{1,N})$ from $0$, independently of $N$, because the minimum of the derivative of $f^N$ on $f^{-N}(U)$ tends to $\infty $ with $N$. We will choose $h$ to be the identity outside $U$. 
 
 Choose a finite   partition ${\mathcal{R}}_0$ of   $\overline{U}$ into topological rectangles with piecewise $C^1$ boundary such that each rectangle is a square, up to bounded distortion, has two edges in $U$, and intersects $G_0$ in a single arc in an edge of $G_0$. 
We have $G_{1,iN_0}\subset f^{-iN_0}(G_0)$. The interior of the union of the rectangles in ${\mathcal{R}}_0$ is of course the set $U$.  We also write $U=U_{1,0}$. Let $U_{1,iN_0}$ be the union  of components of $f^{-iN_0}(R)$, for $R\in {\mathcal{R}}_0$, which  intersect $G_{1, iN_0}$, including those which intersect $G_{1,iN_0}$ in just a single point,  adjacent  to a component intersecting $G_{iN_0}$ in an edge. 

\Comment{\color{red}{``but add in'' corrected to ``including''
 and ``to an adjacent' corrected to ``adjacent to a''. $G_{iN_0}$ corrected to $G_{1,iN_0}$ a few times. Further corrections made in response to comment (16). As for comment (17), the proof has been extensively rewritten. I am sorry I have  not provided diagrams. I simply have not had time, despite working on this pretty much full-time since February and close to that since December. 
}}

There might be some sets in $f^{-iN_0}(\mathcal{R}_0)$, which intersect $G_{1,iN_0}$ in a single point which is a vertex of $f^{-iN_0}(G_0)$ but not a vertex of $G_{1,iN_0}$. All the sets in $f^{-iN}({\mathcal{R}}_{0})$ which intersect $G_{1,iN_0}$ in a nontrivial arc in an  edge are then topological rectangles, if we regard the edges of the rectangle  as the two components  of intersection with  $\partial U_{1,iN_0}$, and the two complementary  components in the boundary. We call these sets of $f^{-iN_0}(\mathcal{R}_0)$ {\em{first type sets}} and the others are {\em{second type sets.}} A second type set $R$ intersects $G_{1,iN_0}$ in a single point in $\partial R$ which is a vertex of $f^{-iN_0}(G_{1,0})$ but not a vertex of $G_{1,iN_0}$. The set $\partial R\cap \partial U_{1,iN_0}$ is connected. 
 We can assume that second type sets in   $f^{-iN_0}(\mathcal{R}_0)$ are always adjacent to rectangles of the first type. and then add each set $R$ of the second type to an adjacent one of the first type, say $R'$. We also combine the set $\partial R\cap \partial U_{1,iN_0}$ to the adjacent component of $\partial R' \cap \partial U_{1,iN_0}$ so that  the combined set is a topological rectangle with two edges in $\partial U_{1,iN_0}$. We write  ${\mathcal{R}}_{iN_0}$ for this set of topological rectangles, each of which maps forward under $f^{iN_0}$ to either a topological rectangle in ${\mathcal{R}}_0$ or a union of two topological rectangles in ${\mathcal{R}}_0$ whose boundaries intersect in an arc including a vertex of $G_0=G_{1,0}$. Thus the number of possibilities for the images under $f^{iN_0}$ of rectangles in ${\mathcal{R}}_{iN_0}$ is bounded in terms of the number of sets in ${\mathcal{R}}_0$.  If $N_0$ is sufficiently large, we have $U_{1,N_0}\subset \mbox{int}(U_{1,0})$, and then $U_{1,(i+1)N_0}\subset \mbox{int}(U_{1,iN_0})$ for all $0\le i<k_0$. By construction,  $U_{1,iN_0}$ is a closed  neighbourhood of $G_{1,jN_0}$ for all $j\ge i$. In particular, $U_1=U_{1,N}$ is a neighbourhood of $G_1=G_{1,N}$. 

\Comment{\color{red} {$U_j$ changed to $U_{1,j}$ as we need $U_1=U_{1,N}$. Below $U_{iN_0}$ is corrected to $U_{1,iN_0}$ a few times. }}

 To construct $h$, we first construct two foliations ${\mathcal{F}}_0$ and ${\mathcal{F}}_1$ of $U_{1,0}=U$. The map $h$ will then map leaves of ${\mathcal{F}}_0$ to leaves of ${\mathcal{F}}_1$. For ${\mathcal{F}}_0$, for each rectangle $R$, there will be a piecewise $C^1$ homeomorphism from $[a(R),b(R)]\times [-1,1]$ to $R$, with derivative bounded and bounded from $0$, where it is defined, such that the leaves of the foliation are the images of the sets $\{ x\} \times [-1,1]$, and the arc of an edge of $G_0$ in $R$ is the image of $[a(R),b(R)]\times \{ 0\}$, and $R\cap \partial U$ is the image of $[a(R),b(R)]\times \{ 1,-1\} $. Thus each leaf of ${\mathcal{F}}_0$ in $R$ crosses $G_0$ exactly once. It is clear that we can construct ${\mathcal{F}}_0$ because it just depends on the homeomorphisms between rectangles in $U$ and $[a(R),b(R)]\times [-1,1]$. It is convenient to choose $a(R)$, $b(R)$ and the homeomorphism between the topological and geometric rectangles as follows. For each edge $e$ of $G_0$, there is a finite number $n(e)$ of topological rectangles in ${\mathcal {R}}_0$ containing $e$. Number these rectangles $R_i(e)$ for $1\le i\le n(e)$ in the order in which they are placed along $e$. Choose a  component $\partial _1R_i(e)$ of $\partial R_i(e)\cap \partial U_{1,0}$ so that $\partial _1R_i(e)$ is  in the same component of $\partial U_{1,0}$ for all $i\le n(e)$. Then for $R=R_i(e)$ for some $i$, let $b(R)-a(R)$ be the length of $\partial _1R$ in the spherical metric and let $a(R_1(e))=0$ and $a(R_{i+1}(e))=b(R_i(e))$ for $1\le i<n(e)$. Then choose the piecewise $C^1$ homeomorphism from $R$ to $[a(R),b(R)]\times [-1,1]$ to map spherical  length on $\partial _1R$ to Euclidean length on $[a(R),b(R)]$ and to have a uniform bound on derivative. 
 
 We make a similar parametrisation of the sets in ${\mathcal{R}}_{iN_0}$, all of which are topological rectangles. Each one intersects in its interior a single arc from a single edge of $G_{1,iN_0}$ which is  ambient isotopic to an edge $e$ of $G_0=G_{1,0}$. We write $R_j(e,i)$, for $1\le j\le n(e,i)$ for the rectangles in ${\mathcal{R}}_{iN_0}$ which intersect $G_{1,iN_0}$ in an arc from the edge which is ambient isotopic to $e\subset G_0$. We choose the component $\partial _1R$ of $\partial R\cap \partial U_{1,iN_0}$,  for $R=R_j(e,i)$, so that $\partial _1R$ is in the component of $\partial U_{1,iN_0}$ which is not separated from $\partial _1R'$ by $G_{1,iN_0}$, for $R'=R_k(e)\in{\mathcal{R}}_0$ for any $k$. Then for $R=R(e,i)$, we let $b(R)-a(R)$ be the spherical length of $\partial _1R$ and we choose a piecewise $C^1$ homeomorphism from $R$ to $[a(R),b(R)]\times [-c(R),c(R)]$ where $c(R)$ is the length of one of the  components of $\partial R\cap U_{1,iN_0}$. It does not matter which one because the lengths of the two components differ by at most a bounded multiplicative constant. We also choose the homeomorphism to map $G_{1,iN_0}$ to $[a(R),b(R)]\times \{ 0\} $ We can choose this piecewise $C^1$ homeomorphism to have distortion bounded independently of $i$ and $j$, because $f^{iN_0}$ is univalent from a neighbourhood of $R_j(e,i)$ onto the neighbourhood of either a rectangle of ${\mathcal{R}}_0$ or the union of two rectangles of ${\mathcal{R}}_0$. 
 
 So now we consider ${\mathcal{F}}_1$.  Each  leaf of ${\mathcal{F}}_1$ in $U$ will cross $G_1$ exactly once, and will also cross $\partial U_{1,iN_0}$ exactly twice for each $0\le i\le k_0$, once on each side of $G_1$. 
 Each leaf of ${\mathcal{F}}_1$ will have the same two endpoints as some leaf of ${\mathcal{F}}_0$. Intersections of leaves with $\partial U_{1,iN_0}$ will be transverse for each $1\le i\le k_0$. For each edge $e$ of $G_0$ and each $i<k_0$, leaf segments from $x\in \partial _1R_k(e,i)$ in $U_{1,iN_0}\setminus U_{1,(i+1)N_0}$ will end at $\tau (x)\in \partial _1R_\ell (e,i+1)$ for some $\ell \le n(e,i+1)$. The endpoint $\tau (x)$ is determined from $x$ by using the parametrisations of $\bigcup _{k\le n(e,i)}\partial_1R_k(e,i)$ and $\bigcup_{\ell \le n(e,i+1)}\partial _1R_\ell (e,i+1)$ by $[a_1(e,i),b_{n(e,i)}(e,i)]$ and $[a_1(e,i+1),b_{n(e,i+1)}(e,i+1)]$ respectively. We choose the map $\tau $ to be of the form $t\mapsto \beta t$ which respect to this parametrisation. Thus, $\tau $ multiplies length by a constant depending only on $e$ and $i$. We take the same parametrisation on the other components of $\partial R_k(e,i)\cap \partial U_{1,iN_0}$ and $\partial R_\ell (e,i+1)\cap\partial U_{1,(i+1)N_0}$ and again use the map $t\mapsto \beta t$ to determine the endpoints of the leaf segments of leaves in ${\mathcal{F}}_1$ in this component of $U_{1,iN_0}\setminus U_{1,(i+1)N_0}$. This time the map does not multiply length by a constant, but does do so up to bounded distortion. We then foliate each component of $U_{1,iN_0}\setminus U_{1,(i+1)N_0}$ by leaf segments with these endpoints. For adjacent edges $e$ and $e'$, where $\partial R_{n(e,i)}(e,i)\cap \partial R_1(e'i)\ne \emptyset $ (or similarly with $e$ and $e'$ interchanged) then we need to make sure that we choose the segment joining the point $\partial _1R_{n(e,i)}(e,i)\cap \partial _1R_1(e',i)\cap \partial U_{iN_0}$ to  $\partial _1R_{n(e,i+1)}(e,i+1)\cap \partial _1R_1(e',i+1)\cap \partial U_{(i+1)N_0}$ extends $C^1$  continuously on both sides in  the component of  $U_{1,iN_0}\setminus U_{1,(i+1)N_0}$, and similarly on the other components of   $\partial R_{n(e,i)}(e,i)\cap \partial R_1(e',i)\cap \partial U_{iN_0}$ and 
$\partial R_{n(e,i+1)}(e,i+1)\cap \partial R_1(e',i+1)\cap \partial U_{(i+1)N_0}$. In $U_{1,k_0N_0}$ the leaves of ${\mathcal{F}}_1$ in $R$ are simply the images of the lines $\{x\}\times[-c(R),c(R)]$ in the rectangles.  Because all rectangles in ${\mathcal{R}}_{iN_0}$ and ${\mathcal{R}}_{(i+1)N_0}$ map forward under $f^{iN_0}$  to either rectangles or the union of two rectangles in ${\mathcal{R}}_{0}$ and ${\mathcal{R}}_{N_0}$, we can choose the leaves of $\mathcal{F}_1$ to  be piecewise $C^1$ with bounded derivative independent of $i$. Moreover, the length of leaves of ${\mathcal{F}}_1$ is bounded independently of $k_0$.

  Since $h$ is to be the identity on $\partial U$,  we know exactly which leaf  of ${\mathcal{F}}_1$ is the image under $h$ of any given leaf in $L\in {\mathcal{F}}_0$. We also choose $h$ to map the point $L\cap G_0$ to $h(L)\cap G_{1,k_0N_0}$. Now we claim that the derivative of $h:G_0\to G_{1,k_0N_0}$ is bounded from $0$ independently of $k_0$. This follows from the lower bound of the derivative of the map  along leaves of ${\mathcal{F}}_1$ from $\partial _1R$ to $\partial _1R'$ for $R\in{\mathcal{R}}_0$ and $R'\in{\mathcal{R}}_{k_0N_0}$ for $R=R_j(e)$ and $R'=R_\ell (e,k_0)$ for some $k_0$. Whenever there are such leaves they have been chosen so that the map along leaves from $\partial _1R$ to $\partial _1R'$ multiplies length by a constant depending only on lengths of $\bigcup _{i\le n(e)}R_i(e)$ and $\bigcup_{i\le n(e,k_0)}R_i(e,k_0)$. Since the first of these has length bounded above and below and the second has length bounded below, and the rectangles in ${\mathcal{R}}_{k_0N_0}$ map under $f^{k_0N_0}$ to rectangles or unions of two rectangles in ${\mathcal{R}}_0$ with bounded distortion, we obtain the required lower bound on  the derivative of $h:G_0=G_{1,0}\to G_1=G_{1,k_0N_0}$. To obtain the  required lower bound of the derivative of $h$ on $h^{-1}(U_1)$, we simply choose the segments of leaves of ${\mathcal{F}}_0$ containing $G_0$ which map to $U_1$ to be sufficiently short that the derivative of $h$ along these segments is $\ge 1$: we have this freedom in choosing $h^{-1}(U_1)$ and $h$ The bound on the diameter of leaves of ${\mathcal{F}}_0$ and ${\mathcal{F}}_1$, independent of $k_0$, gives the bound on the diameter of  $\{ \theta _t(x):t\in [0,1]\} $ for a suitable $C_1$ and isotopy $\theta _t$ between the identity and $h$..

 % \ep

\subsection{Proof of Theorem \ref{1.1} for some $N$.}\label{1.5}

Let $G_0$ and $G_1$ be the graphs as in Lemma \ref{1.4}, and let $U_1=U_{1,N}=U_{1,k_0N_0}$, and  $h$ be as in Lemma \ref{1.4}, and $U=U_0$, so that $ f^N\circ h=g$ is expanding on $G_0$, and $h$ is  the identity outside $U$ and isotopic to the identity on $\overline{\mathbb C}$ via an isotopy which is the identity outside $U$. More generally, let $U_{1,iN_0}$ be as in \ref{1.4}  and write $U_n=U_{1,nN}$, remembering that $N=k_0N_0$. So $U_{n+1}\subset U_n$ for all $n$ and $f^N(U_{n+1})=U_n$. We also choose a neighbourhood $U_n'$ of $U_n$ for $n\ge 1$ with $U_n'\subset U_{n-1}$ as follows. Recall froom \ref{1.4} that $U_1$ is a finite union of topological rectangles and that $U_n$ is  covered by rectangles of the form $SR$ where $S$ is a univalent  local inverse of  $f^{(n-1)N}$  on $R$, and $R$ is one of the rectangles in the finite union. We then let $U_1'$ be a finite union of  open balls $B$ covering $U_1$ such that $U_1'\subset U_0$ and such that any local inverse $S$ of $f^{(n-1)}N$ on $B$ is univalent, and let $U_n'$ be the union of such balls $SB$ such that $SB\cap U_n\ne \emptyset $. We thus have 
$$U_n\subset U_n'\subset U_{n-1}.$$
Recall that $d$ is the spherical metric on $\overline{\mathbb C}$. Now we are going to define homeomorphisms $h_n$ on $\overline{\mathbb C}$ such that $h_n$ is the identity outside $U$ and such that 
\begin{equation}\label{1.5.3}\mbox{Max}\{ d(h_n(x),x):x\in \overline{\mathbb C}\} \le C_1\lambda ^n,\end{equation}
where $C_1$ is as in \ref{1.4}, 
\begin{equation}\label{1.5.4}h_n\circ f^N=f^N\circ h_{n+1}\mbox{ on }U_{n+1}\mbox{ for }n\ge 0,\end{equation}
\begin{equation}\label{1.5.5}h_n=\mbox{ identity\ \ outside\ \ } U_{n}'\mbox{ for }n\ge 1.\end{equation}

\Comment{\color{red}{Comment(18) $U_n$ should be $U_{1,nk_0N_0}=U_{1,N}$ (since $k_0N_0=N$) where $U_{1,iN_0}$ is as in Lemma 1.4 -- renamed from $U_{iN_0}$ in version 5. This has now been made clear.
 
 Comment (19) $h_{n+1}$ is now defined on $U_{n+1}$ by $f^N\circ h_{n+1}=h_n\circ f^N$ on $U_{n+1}\subset f^{-N}(U_n$, and by being isotopic to the identity on $U_{n+1}$, using that $h_n$ is isotopic to the identity on $U_n$. The claim that $h_n$ can be the identity outside $U_n$ is dropped, although it is chosen to be the identity outside $U_{n-1}$ for $n\ge 1$.}} 

 Write   $h=h_0$. Then $h_0$ is the identity outside $U=U_0$ and isotopic to the identity. Now let $n\ge 0$ and suppose that $h_{n}$ has been defined satisfying the hypotheses. In particular $h_n$ is the identity outside  $U_n'$. Now $f^N:f^{-N}(U_{n-1})\to (U_{n-1})$ is a covering map.  Define  $\widetilde{h}_n:f^{-N}(U_{n-1})\to f^{-N}(U_{n-1})$ by the properties 
 \begin{equation}\label{1.5.7}f^N\circ \widetilde{h}_n=h_{n}\circ f^N\mbox{ on }f^{-N}(U_{n-1})\end{equation}
  and $\widetilde{h}_n$ is isotopic to the identity on $f^{-N}(U_{n-1})$. This implies that $\widetilde{h}_n=\mbox{identity}$ on $\partial f^{-N}(U_{n-1})$. As $\lambda ^{-1}>1$ is the expansion constant of $f^N$ on $U_1$, we have 
\begin{equation}\label{1.5.6}d(x,\widetilde{h}_n(x))\le \lambda d(f^N(x),h_{n}(f^N(x)))\le C_1\lambda ^{n+1}\end{equation}
for all $x\in f^{-N}(U)$, for $C_1$ as in \ref{1.4}. Also, by the inductive hypothesis, $\widetilde{h}_n$ is the identity outside the $f^{-N}(U_n')\subset f^{-N}(U_{n-1})$.  So $\widetilde{h}_n$ is equal to the identity on $\partial U_{n+1}'\cap \partial f^{-N}(U_{n}')$. So  we can define a  homeomorphism $h_{n+1}$ by $h_{n+1}=\widetilde{h}_n$  on $U_n$ except on $SB\setminus U_{n+1}$ where $SB$ is one of the balls covering $U_{n+1}'$ which intersects $\partial U_{n+1}\cap \mbox{int}(f^{-N}(U_n))$. Then we can  extend $h_{n+1}$ from $U_{n+1}$ over such balls so that   (\ref{1.5.3}) to  (\ref{1.5.5}) hold with $n+1$ replacing $n$, using (\ref{1.5.6}) and (\ref{1.5.7})  

\Comment{\color{red}{Comment (20) agreed: $G_n=h_n(G_{n-1})$ replaced by $G_{n+1}=h_n(G_{n})$. Comment (21): the rest of this proof has now been rewritten.}}
 
 Write $f^N\circ h_0=g$ and $G_n=G_{1,nN}$ where $G_{1,iN_0}$ is as in \ref{1.4}. Then  $G_{n+1}=h_n(G_{n})$.  Let 
   $\varphi _n=h_n\circ \cdots h_0$  for all $n\ge 0$. Then
   $$d(\varphi _n(x),\varphi _{n-1}(x))\le C_1\lambda ^n$$
   for all $x\in \mathbb C$. It follows that $\varphi _n$ converges uniformly on $\overline{\mathbb C}$ to a continuous map $\varphi :{\overline{\mathbb C}}\to {\overline{\mathbb C}}$. The set $G'=\varphi (G_0)$ is then the required graph with $G'\subset f^{-N}(G')$, provided that $\varphi $ is a homeomorphism. We have 
  \begin{equation}\label{1.5.9}\varphi _n^{-1}(U_n)=\varphi _{n-1}^{-1}(h_n^{-1}(U_n))\subset \varphi _{n-1}^{-1}(U_n')\subset \varphi _{n-1}^{-1}(U_{n-1}).\end{equation}
  Using (\ref{1.5.9}) for  $i$ replacing $n$ for each $k\le i\le n$ and also using (\ref{1.5.4}) with $i$ replacing $n$, for $k-1\le i\le n-1$, we have, for each $0\le k\le n-1$,
 
  \begin{equation}\label{1.5.8}f^N\circ \varphi _n=h_{n-1}\circ \cdots \circ h_k\circ f^N\circ \varphi _k\mbox{ on }\varphi _n^{-1}(U_n)\end{equation}
  This gives  
\begin{equation}\label{1.5.1}f^N\circ \varphi _n=\varphi _{n-1}\circ g\mbox{ on }\varphi _{n}^{-1}(U_n),\end{equation}
 for all $n\ge 1$. Hence 
 \begin{equation}\label{1.5.9}f^{kN}\circ \varphi _n=\varphi _{n-k}\circ g^k\mbox{ on }\varphi _n^{-1}(U_n).\end{equation}
 In particular
 \begin{equation}\label{1.5.10}f^{nN}\circ \varphi _n=h\circ g^n\mbox{ on }\varphi _n^{-1}(U_n).\end{equation}
 and
 $$g^n(\varphi _n^{-1}(U_n))=h^{-1}(U_0)=U_0$$
 Since  by \ref{1.4} any local inverse of $g$ on $U_0$ mapping into $U_0$ has contraction constant $\lambda $,  we deduce that
$$\bigcap _k\varphi _k^{-1}(U_{k})=G'$$
Taking limits in (\ref{1.5.1}), we obtain
 $$f^N\circ \varphi =\varphi \circ g\mbox{ on }G_0.$$

 Now since $\varphi $ is the limit of the homeomorphisms $\varphi _n$, it follows that $\varphi $ is a monotone map, that is, $\varphi ^{-1}(x)$ is connected for all $x$. (This is because we can find a sequence $\varepsilon _n$ decreasing to $0$ such that $\varphi_n^{-1}(\{ y:d(x,y)\le \varepsilon _n\} )$ is a decreasing sequence of connected sets which is equal to $\varphi ^{-1}(x)$.) We have $h_n=\mbox{identity}$ outside $U_{n-1}$ for $n\ge 1$ and hence $\varphi_n=\varphi _k=\varphi $ outside $\varphi _k^{-1}(U_{k})$ for all $n\ge k$, and $\varphi $ is a homeomorphism except possibly  on $\bigcap _k\varphi _k^{-1}(U_{k})=G'$. So if $\varphi $ is not a homeomorphism then $\varphi ^{-1}(x)$, for some $x\in G'$, is a nontrivial connected set in $G_0$. Then $\varphi ^{-1}(f^{nN}(x))=g^{n}(\varphi ^{-1}(x))$ is nontrivial connected for each $n\ge 0$ and since $g$ is expanding on $G_0$ we obtain that $\varphi $ is constant on a subgraph of  $G'$ which separates $F(G_0)$, an obvious contradiction.

\subsection{Nested sequences of arcs.}\label{1.6}

We are now ready to start studying intersections between $f^{-i}(G')$ and $f^{-j}(G')$ for $0\le i,j<N$ and $i\ne j$.  Since $f^i:f^{-i}(G')\to G'$ is a finite  covering and $G'$ is a finite graph, $f^{-i}(G')$ is a finite graph  for all $i\ge 0$.

\Comment{\color{red}{Comment(35): Definitions of arc  added in in the introduction. Comment (36) is a general comment about the nature of $f^{-i}(G')$, Hopefully the nature of $f^{-i}(G')$ has been clarified above. }}

Fix $\varepsilon _0$ suitably small that the distance between any two vertices of $f^{-k}(G')$ is $\ge 4\varepsilon _0$, for $0\le k<N$.  Let $0\le i,j<N$, with $i\ne j$. A sequence of arcs $\gamma _n\subset f^{-j}(G')$ ($n\ge n_0$), with $\gamma _n$ of diameter $\le \varepsilon _0$ for all $n$, such that $\gamma _n$ is disjoint from $f^{-i}(G')$ apart from having both endpoints in $f^{-j}(G')\cap \eta $, where $\eta \subset f^{-i}(G')$ has diameter $\le \varepsilon _0$,
 and $\gamma _{n+1}$ is inside the topological disc bounded by $\gamma _n$ and $\eta $, is called a {\em{nested sequence of arcs for $f^{-i}(G')$ and $f^{-j}(G')$}}.
 
\Comment{\color{red}{The definition above of nested sequence of arcs has been rewritten. Comment (23)  $f^{-j}(G')$ changed to $f^{-j}(G')\cap\eta $  Restrictions on $\varepsilon _0$ made more precise. Comments (22), (25): ``is'' added in (twice)  Comment (24) $f{-j}(G')$ corrected to $f^{-j}(G')$. }}

More generally we will talk about a {\em{nested sequence of arcs for $(f^{-i}(G'),f^{-\ell }(G'))$ and $f^{-j}(G')$}} if  $\gamma _n\subset f^{-j}(G')$ is disjoint from $f^{-i}(G')\cup f^{-\ell }(G')$ apart from having one endpoint in  $f^{-j}(G')\cap f^{-i}(G')$ and the other in $f^{-j}(G')\cap f^{-\ell }(G')$
  and $\gamma _{n+1}$ is inside the topological disc bounded by $\gamma _n$ and $\eta $, where $\gamma _n$ has diameter $\le \varepsilon _0$ for all $n$,  $\eta =\eta ^1\cup \eta ^2$ has diameter $\le \varepsilon _0$ and where $\eta ^1\subset f^{-i}(G')$ and $\eta ^2\subset f^{-\ell }(G')$ are disjoint apart from a common endpoint in $f^{-i}(G')\cap f^{-\ell }(G')$. 

\Comment{ \color{red}{ Similar corrections to previous paragraph. $\gamma _n$ expanded to $\gamma _n\subset f^{-j}(G')$, $f^{-j}(G')\cap f^{-\ell }(G')$ corrected  from $f^{-\ell }(G')$, $f^{-i}(G')\cap f^{-\ell }(G')$ corrected  from  $f^{-i}(G')\cup f^{-\ell }(G')$. Condition on diameter of $\gamma _n$ added in.}}

Locally, and abstractly, it is not difficult to construct nested sequences of arcs, and it might well be possible to make the constructions in our context. Our strategy will be to isolate nested sequences before replacement to make another graph.  First we need some analysis of them. 
 
\begin{lemma}\label{1.7}Fix $0\le i,j<N$ with $i\ne j$  and any sufficiently small $\varepsilon >0$. 
\begin{enumerate} 
\item There is a finite set $Y_1$ of periodic points (under $f$) in $\bigcup _{i=0}^{N-1}f^{-i}(G')$ such that the following holds. Let $0\le i,j<N$ with $i\ne j $ and let $\gamma $ be any arc with  $\mbox{int}(\gamma )\subset f^{-j}(G')\setminus f^{-i}(G')$  and at least one endpoint of $\gamma $, $x_1$, is in $f^{-i}(G')\cap f^{-j}(G')$. Then there is $n>0$ and  such that $f^{n}(x_1)\in Y_1$. 

\Comment{\color{red}{COMMENT Comment (26) of the report is clearly regarded as important. It looked all right to me but I think there must have been some ambiguity about what was written. So I have replaced the finite set $Y$, which appears in the proof, by $Y_0$ in this statement. $Y_0$ is the set of periodic orbits in the forward orbit of the set $Y$ of  eventually periodic points. }}

\item There is a finite set ${\mathcal{C}}$ of pairs $(\gamma ',\{ x_1',x_2'\} )$ of arcs $\gamma '$ with endpoints $x_1'$ and $x_2'$  such that the following hold. Suppose that  $\gamma $ is an arc of  diameter $< \varepsilon$ with   $\mbox{int}(\gamma )\subset f^{-j}(G')\setminus f^{-i}(G')$  and with endpoints $x_1$ and $x_2$. Then there is $n>0$ and $(\gamma ',\{ x_1',x_2'\})\in{\mathcal{C}}$ such that $f^{nN}(\gamma )=\gamma '$ and $\{ f^{nN}(x_1),f^{nN}(x_2)\} =\{ x_1',x_2'\} $.  

Consequently, if $\gamma _n$ ($n\ge n_0$) is any nested sequence of arcs, and $\gamma _n$ has endpoints $x_{n,1}$ and $x_{n,2}$ and $\gamma _n$ has diameter $\le \varepsilon $ for $n\ge n_0$, then  there are $n$ and $n+k$ with $n_0\le  n<n+k\le n_0+ \#({\mathcal{C}})$ and $m$ and $\ell >0$ and $(\gamma ',\{ x_1',x_2'\} )\in {\mathcal{C}}$ with $f^{mN}(\gamma _n)=f^{(m+\ell )N}(\gamma _{n+k})=\gamma '$,  and $f^{mN}(x_{n,t})=f^{(m+\ell )N}(x_{n+k,t})=x_t'$ for $t=1$, $2$, and some ordering  of endpoints such that $x_{n+k,1}$ is nearer to $x_{n,1}$ than the other endpoint of $\gamma _n$.

\Comment{\color{red}{
Added in in the last paragraph that $x_{n,1}$ and $x_{n,2}$ are the endpoints of $\gamma _n$.

Corrections in punctuation in the last sentence. 

Comment (27): I don't think I want to replace ``with'' by ``apart from''  

Comment(28): $(\gamma ', x_1',x_2')$ replaced by $(\gamma ',\{ x_1',x_2'\})$ 

Comment (29) $n\le n_0$ replaced by $n\ge n_0$

 Comment (30)$(\gamma ',x')$ changed to  $(\gamma ',\{ x_1',x_2'\} )$. }}

Moreover, either $f^{mN}(\gamma _p)$ ($p\ge n$) is a nested sequence of arcs for $f^{-i}(G')$ and $f^{-j}(G')$, or $f^{mN}(\gamma _n)$ is in a small neighbourhood of a vertex of $f^{-i}(G')$.

\Comment{\color{red}{Comment (31) ``of'' added after ``vertex''. Also $f^{mN}(\gamma )$ is corrected to $f^{mN}(\gamma _n)$.}}

\item Similarly, for fixed distinct $0<i,\ell,j<N$  and $\varepsilon >0$, there is a finite set ${\mathcal{C}}'$ of pairs 
$(\gamma',\{ x_1',x_2'\} )$ of arcs $\gamma '$ with endpoints $x_1'$ and $x_2'$ such that the following hold.  If $\gamma _n$ is any nested sequence of arcs for $(f^{-i}(G'),f^{-\ell }(G'))$ and
 $f^{-j}(G')$, and $\gamma _n$ has diameter $\le \varepsilon $ for $n\ge n_0$, then there are $n$ and $n+k$ with $n_0\le  n<n+k\le n_0+\#({\mathcal{C}}')$ and $m$ and $\ell >0$ and $(\gamma ',\{ x_1',x_2'\})\in {\mathcal{C}}'$ with $f^{mN}(\gamma _n)=f^{(m+\ell)N}(\gamma _{n+k})=\gamma '$ and $f^{mN}(x_{n,t})=f^{(m+\ell)N}(x_{n+k,t})=x_t'$, where $x_{n,1}'$, $x_{n+k,1}'$ and $x_1'$ are the respective endpoints in $f^{-i}(G')$ and $x_{n,2}'$, $x_{n+k,2}'$ and $x_2'$ are the respective endpoints in $f^{-\ell }(G')$. 

  \Comment{\color{red}{comment (34)$x_{n+k,1})$ replaced by $x_{n+k,1}'$ Comment(32) $(\gamma ', x_1',x_2')$ replaced by $(\gamma ',\{ x_1',x_2'\})$  and ${\mathcal{C}}$ replaced by ${\mathcal{C}}'$, twice. Comment(33) 
$f^{m+\ell}$replaced by $f^{(m+\ell)N}$.}}

\end{enumerate}

 \end{lemma}
\begin{remark}
\item Note that the statements of 2 and  3 are effectively  about {\em{finite}} nested sequences. It is possible that the set of limits of {\em{infinite}} nested sequences is uncountable. 
\end{remark}

\noindent{\em{Proof.}} 

\noindent  We choose a constant $C$ so that the maximum of the spherical derivative of $f^N$ is $\le C$. We choose $\varepsilon $ such that the distance between any two vertices of $f^{-i}(G')$ is $\ge 100C\varepsilon $, for $0\le i\le 2N$, and similarly for the minimum distance between any two edges of  $f^{-i}(G')$ which do not have a common vertex.  

\Comment{\color{red}{Restrictions on $\varepsilon $ made more precise -- and taking into account comment (50).}}

Let  $Y$ be  the set of all endpoints, in $f^{-i}(G')\cap f^{-j}(G')$, of arcs of diameter $\ge \varepsilon $ and $\le C\varepsilon $ in  $f^{-j}(G')\setminus f^{-i}(G')$  but with at least one endpoint in $f^{-i}(G')\cap f^{-j}(G')$. The set $Y$ is finite,  because there is a lower bound, depending on $G'$, on the Hausdorff distance between two such disjoint arcs.  

 The set ${\mathcal{C}}$ contains all  $(\gamma ,\{ x_1,x_2\} )$  such that $\gamma $ is an arc  in $f^{-j}(G')\setminus f^{-i}(G')$ apart from endpoints $x_1$ and $x_2$, with  endpoint $x_1\in f^{-i}(G')$, and  spherical diameter $\ge \varepsilon $ and $\le C\varepsilon $. The set of such arcs is finite because each edge of $f^{-j}(G')$ is an arc.  The remaining pairs  
 $(\gamma ,\{x_1,x_2\} )$ in ${\mathcal{C}}$ are of the form $\gamma =\gamma _1\cup \gamma _3\cup \gamma _2$ (arcs in this order) such that:
 
\begin{itemize}
\item  the diameter of $\gamma $ is $\ge \varepsilon $ and $\le C\varepsilon $ ;
\item  $\gamma _1$ is either a single vertex of $f^{-i}(G')$  or  is an arc  in $f^{-j}(G')\setminus f^{-i}(G')$  apart from {\em{both}} endpoints in $f^{-i}(G')$ in different edges of $f^{-i}(G')$ which meet at a vertex $x$ of $f^{-i}(G')$ and similarly for $\gamma _2$;
\item $x_k$ is the endpoint of $\gamma $ which is also an endpoint of $\gamma _k$ for $k=1$ and $2$;

\Comment{\color{red} {Comment (39): $i$ replaced by $k$ as suggested. Relevant to comment (38) and also (41): ``near $x$'' replaced by which meet at a vertex $x$ of $f^{-i}(G')$ }}

\item $\gamma _3$  does not intersect the edges of $f^{-i}(G')$ (one or two edges) containing $x_1$ and $x_2$ --  unless $x_1=\gamma _1=x$, in which case $\gamma _3$ does not intersect the interior of the edge of $f^{-i}(G')$ containing $x_2$, and similarly if $x_2=\gamma _2$

\Comment{\color{red}{Comment (40): ``edges'' expanded to ``edges of $f^{-i}(G')$''. 
Comment (38) and (41): reference to $x$ has now been removed from this item and is in the item above it (second item). Since the larger  arc $\gamma $ has diameter $\le C\varepsilon $ and $\gamma _1$ an $\gamma _2$ contain points in different edges of $f^{-i}(G')$ meeting at the vertex $x$, $\gamma _3$ is automatically going to be in a small neighbourhood of the vertex $x$. 
``$\gamma _3\cap f^{-i}(G')$'' has been replaced by ``$\gamma _3$''.  
Comment (42) The last item has been expanded to take account of the case when $\gamma _1$ of $\gamma _2$ is a trivial arc at a vertex of $f^{-i}(G')$ and similarly for $\gamma _2$.}}\end{itemize}  

Once again this set of arcs on  $f^{-j}(G')$ is finite, because any two such arcs $\gamma _1$ are either equal or disjoint apart from endpoints, and similarly for $\gamma _2$, and the decomposition of $\gamma $ as $\gamma _1\cup \gamma _3\cup \gamma _2$ ( $\gamma _i$ in this order along $\gamma $) is canonical. 

\Comment{\color{red}{Comment (37): it is now emphasised that the decomposition $\gamma =\gamma _1\cup \gamma _3\cup \gamma _2$ is in this order . In the last sentence above the order has been corrected to this. Comment (43)``be'' and ``apart'' added in on the line below.}}

Now let  $\gamma $ be an arc in $f^{-j}(G')\setminus f^{-i}(G')$ apart  from, possibly,  the endpoints, 
 and the endpoint $x_1$ is in $f^{-i}(G')\cap f^{-j}(G')$. Since we are only interested in $x_1$ for the moment we can reduce $\gamma $ if necessary and assume that it has diameter $\le \varepsilon $. We want to show that $f^{nN}(x_1)\in Y$ for some $n>0$. Since this includes the case $x_1\in Y$, this will show that points in $Y$ are eventually periodic and will prove 1.  It suffices to show that there is $n>0$ such that  $f^{nN}(\gamma )$ contains an arc with endpoint at $f^{nN}(x_1)$ but otherwise in $f^{-j}(G')\setminus f^{-i}(G')$ and of diameter $\ge \varepsilon $. 
 
 \Comment{\color{red}{Comment (44) $f^{nN}(x)$ replaced by $f^{nN}(x_1)$. Definition of $Y_1$ added in.   }}

 Either there is $n>0$ such that  
 \begin{equation}\label{1.7.1}f^{rN}(\gamma )\setminus \{ f^{rN}(x_1)\} \subset f^{-j}(G')\setminus f^{-i}(G')\end{equation}
for  $0\le r\le n$ and $f^{nN}(\gamma )$ has diameter $\ge \varepsilon $ and $\le C\varepsilon $,  or for some least  $0\le r$, with $r<n$,   $f^{rN}(\gamma )$ has diameter $<\varepsilon $, (\ref{1.7.1}) holds, but 
$$f^{rN}(\gamma )\setminus \{f^{rN}(x_1)\} \cap f^{-i-N}(G')\setminus f^{-i}(G')\ne \emptyset. $$

\Comment{\color{red}{Comment (45) ``some least $0\le r<n$'' replaced by ``some least  $0\le r$, with $r<n$,''
Comment (46) It is now made clear at the start of this section  that  $f^{-i}(G')$ is a finite graph for each $i\ge 0$, and, at the start of this proof, that the distance between vertices of $f^{-i}(G')$ is $\ge 10C\varepsilon $ for $0\le i\le N$. This means that $f^{rN}(\gamma )$ cannot be the shape of a $\sigma $ }}
  
  There must be an arc of diameter $<\delta $ in $f^{-i-N}(G')$ which starts at  $x_3$, continues on to a vertex  $x_4$ of $f^{-i-N}(G')$ in $f^{-i}(G')$ before continuing in $f^{-i}(G')$ to   the endpoint  $f^{rN}(x_1)\in f^{-i}(G')$ of $f^{rN}(\gamma )$. Here $\delta $ can be taken arbitrarily small by taking $\varepsilon $ arbitrarily small. If $x_4$ coincides with $f^{rN}(x_1)$  then of course $x_1$ is eventually periodic.  Otherwise, let $x_5\in f^{-i-N}(G')\setminus f^{-i}(G')$ be the nearest such  point on $f^{rN}(\gamma )$ to $f^{rN}(x_1)$. Let $\gamma _1'$ be the subarc of $f^{rN}(\gamma )$ between $f^{rN}(x_1)$ and $x_5$.  Then $\gamma _1'$ is disjoint from $f^{-i-N}(G')$ apart from endpoints, and $f^N(\gamma _1')$ is disjoint from $f^{-i}(G')$ apart from endpoints, but bounds a disc together with an arc   of $f^{-i}(G')$ which contains the vertex $f^N(x_4)$ of $f^{-i}(G')$ which is eventually periodic. Then $f^{sN}(\gamma _1')$ cannot be close to a vertex of $f^{-i-N}(G')\setminus f^{-i}(G')$ until it has expanded out of a neighbourhood of  $f^N(x_4)$, by which time it has length $\ge \varepsilon $ and $<C\varepsilon $. If $n=r+s$ then $f^{sN}(\gamma _1')$ is the required subarc of $f^{nN}(\gamma )$ with endpoint at $f^{nN}(x_1)$ and the proof of 1 is completed.
  
  \Comment{\color{red}{Comment (47)``$x_1$ and $x_5$'' replaced by ``$f^{rN}(x_1)$ and $x_5$'' Comment (48) ``$f^{n}(\gamma )$ with endpoint at $f^{n}(\gamma )$'' replaced by $f^{nN}(\gamma )$ with endpoint at $f^{nN}(x_1)$. Comment (49) ``$\gamma \subset f^{-j}(G')\setminus f^{-i}(G')$'' replaced by ``$\mbox{Int}(\gamma ) \subset f^{-j}(G')\setminus f^{-i}(G')$'' (immediately below this). }}
 
 To prove 2, we start with $\mbox{Int}(\gamma ) \subset f^{-j}(G')\setminus f^{-i}(G')$ of diameter $\le \varepsilon $ and with both  endpoints $x_1$ and $x_2$ in $f^{-i}(G')\cap f^{-j}(G')$. Then if $f^{rN}(\gamma )$ intersects $f^{-i-N}(G')\setminus f^{-i}(G')$, we let $x_6$ be the point of $f^{rN}(\gamma )\cap f^{-i-N}(G')$ which is nearest to $f^{rN}(x_2)$ on $f^{rN}(\gamma )$, and $\gamma _2'$ is the arc of $f^{rN}(\gamma )$ between $f^{rN}(x_2)$ and $x_6$. Then $\gamma _3'$ is the arc of $f^{rN}(\gamma )$ between $\gamma _1'$ and $\gamma _2'$ and does not intersect $f^{-i}(G')$. It follows that $f^{(r+1)N}(\gamma )$ satisfies the conditions for an arc in ${\mathcal{C}}$ apart, possibly, from being of diameter $\ge \varepsilon $. Then, once again, $f^{(t +r)N}(\gamma )$ remains near a vertex of $f^{-i}(G')$ and hence bounded from edges   of $f^{-i-N}(G')\setminus f^{-i}(G')$ for $t\le s$ for the first $s$ such that $f^{(s +r)N}(\gamma )$ has diameter $\ge \varepsilon $.
 
  \Comment{\color{red}{$f^{(s +r)}(\gamma )$ corrected to $f^{(s +r)N}(\gamma )$Comment (50). Restrictions on $\varepsilon $ are now made at the start of the proof and incorporate Comment (50). The restrictions on $\varepsilon $ that were stated in the paragraph below have therefore been removed.}}

The set ${\mathcal{C}}'$ used to prove the result about nested sequences for $(f^{-i}(G'),f^{-\ell }(G'))$
and $f^{-j}(G')$ is a set of arcs  in $f^{-j}(G')$ of diameter $\ge \varepsilon $ and $\le C\varepsilon $ and with endpoints in $f^{-i}(G')$ and $f^{-\ell }(G')$.  Then the conditions for  $(\gamma ,\{ x_1,x_2\} )\in {\mathcal{C}}'$ are refined to either  
$$\gamma \setminus \{ x_1,x_2\}\subset  f^{-j}(G')\setminus (f^{-i}(G')\cup f^{-\ell }(G'))$$ or 
$$\gamma =\gamma _1\cup \gamma _3\cup \gamma _2$$ with 
$$\gamma _1\cup \gamma _2 \setminus \{ x_1,x_2\}\subset  f^{-j}(G')\setminus (f^{-i}(G')\cup f^{-\ell }(G')),$$
and $\gamma _3$ is disjoint from the edge of $f^{-i}(G')$ containing $x_1$ and the edge of $f^{-\ell }(G')$ containing $x_2$. The proof is then exactly similar to that of 2.

\Comment{\color{red}{Comment (51) right bracket added in to  $(f^{-i}(G'),f^{-\ell }(G'))$ near the start of the paragraph and right bracket added at the end of the last display above, together with a comma. Comment (52) $(\gamma ,x_1,x_2)$ corrected to $(\gamma ,\{ x_1,x_2\} )$.  ``defined'' corrected to ``refined''}}
\ep

Now to proceed further we have a sequence of simple lemmas  about the graphs $f^{-i}(G')$ for $0\le i<N$ which use the expanding property and bounded distortion of iterates of $f$. 

\begin{lemma}\label{1.13}
There is a constant $K_1$ and $\delta _0>0$, such that if $x$ and $y$ are two points on $f^{-i}(G')$, for $0\le i<N$, and if $d(x,y)\le \delta _0$, then there is an arc in $f^{-i}(G')$ between $x$ and $y$ of diameter $\le K_1d(x,y)$: that is, arcs of $f^{-i}(G')$ are quasi-arcs.\end{lemma} 

\noindent{\em{Proof}}. Choose $L>0$ such that any closed loop in $f^{-i}(G')$ has diameter $\ge 4L$ and such that  the $4L$-neighbourhood of any point of $f^{-i}(G')$ is disjoint from the postcritical set of $f$. Choose $\delta _0$ so that if $x$ and $y$ are points on $f^{-i}(G')$
 which are distance $\le \delta _0$ apart, then there is an arc of $f^{-i}(G')$ joining $x$ and $y$ of diameter $\le L$.  If $d(x,y)\le \delta _0$,
  then apply $f^{nN}$ for the largest $n$ such that $d(f^{kN}(x),f^{kN}(y))\le \delta _0$ for $k\le n$. Then use the bounded distortion $S$ on the ball of radius $L$ centred on $f^{nN}(x)$, where $S$ is the local inverse of $f^{nN}$ mapping $f^{nN}(x)$ and $f^{nN}(y)$ to $x$ and $y$.
  
  \Comment{\color{red}{ Related to Comment (54), the first sentence is now about the choice of $L$. The second sentence is now about the choice of $\delta _0$ given $L$, and is about the existence of an arc between $x$ and $y$ of diameter $\le L$. Comment (53)`` $f{-i}(G')$'' is corrected to ``$f^{-i}(G')$'' ``$<<\delta $'' is corrected to ``$\le \delta _0 $''.}}
  \ep
  
  \Comment{ \color{red} The following lemma is new, largely prompted by Comment (119).}
  
  \begin{lemma}\label{1.18} There exist $K_1>1$ and $\delta _0>0$ such that if $x_1\in f^{-i}(G')$ and  $x_2\in f^{-j}(G')$ for some $0<i, j<N$ and $d(x_1,x_2)\le \delta _0$ then the following holds. There is $x_3\in f^{-i}(G')\cap f^{-j}(G')$ such that $d(x_k,x_3)\le K_1d(x_1,x_2)$ for $k=1$, $2$, and if $i=j$ and $x_1$ and $x_2$ are in different edges of $f^{-i}(G')$ then $x_3$ is a vertex at which these two edges meet. \end{lemma}
  
  \noindent{\em{Proof}}
 
Choose $\varepsilon   _0>0$ and $K_0>1$ such that local inverses $S$ of $f^n$ on balls $B$ of radius $2K_0\varepsilon  _0$ round points of $f^{-i}(G')$, for $0\le i<N$, are univalent, of distortion bounded by $K_0$ and contract distance for all $n\ge N$. So if $x$, $y\in B$,
 $$K_0^{-1}\frac{|S'(x)|}{|S'(y)|}\le K_0.$$
Suppose also that  the spherical derivative $|(f^N)'|\le K_0$ on such balls $B$. First suppose $i\ne j$ Then there is $\eta _0>0$ such that if $y_1\in f^{-i}(G')$ and $y_2\in f^{-j}(G')$  for $0\le i,j<N$ and $d(y_1,y_2)<\eta _0$ then there is $y_3\in f^{-i}(G')\cap f^{-j}(G')$ such that $d(y_1,y_3)<\varepsilon  _0$ and $d(y_2,y_3)<\varepsilon _0$. For suppose not. Then there are points $y_{1,n}\in f^{-i}(G')$ and $y_{2,n}\in f^{-j}(G')$ such that $d(y_{1,n},y_{2,n})<2^{-n}$ and $d(y_{1,n},y_3)\ge \varepsilon _0$ for all $y_3\in f^{-i}(G')\cap f^{-j}(G')$. But choosing a subsequence we can assume that $\lim _{n\to \infty }y_{1,n}=y_3$. Then $\lim _{n\to \infty }y_{2,n}=y_3$ also, and so $y_3\in f^{-i}(G')\cap f^{-j}(G')$, and for all sufficiently large $n$, $d(y_{k,n},y_3)<\varepsilon _0$ for $k=1$, $2$, a contradiction. 

Similarly, given $\varepsilon _0$, if $\eta _0$ is sufficiently small and $y_1$, $y_2$  are in different edges of $f^{-i}(G')$, and $d(y_1,y_2)\le \eta _0$, then these two edges meet at a vertex $y_3$ with $d(y_k,y_3)\le \varepsilon _0$ for $k=1$, $2$. So there is $\nu _0\le \eta _0$ such that if $y_1$ and $y_2$ are in different edges of $f^{-i-N}(G')$ with $d(y_1,y_2)\le \nu _0$, then they are in edges with a common endpoint $y_3$, a vertex of $f^{-i-N}(G')$, with $d(y_k,y_3)\le \eta _0/K_0$.

Returning to the case of $i\ne j$, for suitably chosen $\eta _0$, with $d(y_1,y_2)\le \eta _0$, we can assume that $y_3\in f^{-i}(G')\cap f^{-j}(G')$ is either a vertex of $f^{-i}(G')$ or a vertex of $f^{-j}(G')$, or any arc in $f^{-i}(G')$ of diameter $\le K_0\varepsilon  _0$ between $y_1$ and $y_3$  does not contain a vertex, and similarly for $y_2$ and $y_3$ in $f^{-j}(G')$. 

Now we consider the general results. Let $d(x_1,x_2)\le \eta _0/K_0$. Choose the least $n$ such that $\eta _0/K_0\le d(f^{nN}(x_1),f^{nN}(x_2))\le \eta _0$. Then write $y_k=f^{nN}(x_k)$ for $k=1$, $2$. Let $y_3$ be as in the first paragraph, so that $d(y_3,y_k)\le \varepsilon _0$.   Let $x_3=S(y_3)$, where $S$ is the local inverse of $f^{nN}$ with $S(y_k)=x_k$ for $k=1$, $2$. 
Let $S_m$ be the local inverse of $f^{mN}$ with $S_m(f^nN(x_1))=f^{(n-m)N}(x_1)$, so that $S=S_n$. 

First we consider the case $i=j$. We claim by induction that $S_m(y_3)$ is a vertex of $f^{-i}(G')$, with $S_m(y_1)$ and $S_m(y_2)$ on adjacent edges of $f^{-i}(G')$. Clearly this is true for $S_0$. Suppose it is true for $S_m$. If $S_{m+1}(y_3)$ is not a vertex of $f^{-i}(G')$ then it is a vertex of $f^{-i-N}(G')$, but still a point of $f^{-i}(G')$, since $S_{m+1}(y_1)$ and $S_{m+1}(y_2)\in f^{-i}(G')$. But that would imply that $S_{m+1}(y_1)$ and $S_{m+1}(y_2)$ are in the same edge of $f^{-i}(G')$, and then $x_1$ and $x_2$ are in the same edge of $f^{-i}(G')$, giving a contradiction. So $S_{m+1}(y_3)$ is a vertex of $f^{-i}(G')$, with $S_{m+1}(y_1)$ and $S_{m+1}(y_2)$ in adjacent edges. So by induction the same is true for $x_k=S_n(y_k)$ for $k=1$, $2$, $3$. 

To obtain the general result for $i\ne j$, we proceed similarly. We can now assume by choice of $\varepsilon _0$ that any two distinct vertices of $f^{-i-N}(G')$ and $f^{-j-N}(G')$ are distance $\ge 2K_0\varepsilon _0$ apart. We need to prove by induction on $m$ that $S_m(y_3)\in f^{-i}(G')\cap f^{-j}(G')$. But if this is not true for a least $m$, then without loss of generality $S_m(y_3)\notin f^{-i}(G')$. Then $S_m(y_3)\in f^{-i-N}(G')$, and  an arc between $S_m(y_3)$ and $S_m(y_1)$ in $f^{-i-N}(G')$ must contain a vertex of $f^{-i-N}(G')$ in the boundary between $f^{-i}(G')$ and $f^{-i-N}(G')$. But then $S_m(y_3)$ must be this vertex, and $S_m(y_3)$ is in $f^{-i}(G')$ after all. 

We therefore take $\delta _0=\eta _0/K_0$ and $K_1=K_0\varepsilon _0/\eta _0$. \ep

\begin{lemma}\label{1.14}
If $C_2$ is sufficiently large, if $x\in  G'\cap f^{-i}(G')$ and $\delta >0$ is given, then for any arc $\zeta \subset G'$ with endpoint at $x$ and of diameter $\ge C_2\delta $, there is an arc  $\zeta _1\subset \zeta $  of diameter $\ge \delta $, which is either in $G'\cap f^{-i}(G')$ or is disjoint from $f^{-i}(G')$, for each $0<i<N$. \end{lemma}

\noindent{\em{Proof}}.

 Given an arc $\zeta \subset G'$ with endpoint at $x$, $f^{nN}(\zeta )$ contains an  edge of $G'$ for all sufficiently large $n$. Suppose $n$ is minimal with this happening, and let $e$ be the edge contained. The number of components of $f^{-nN}(e)\cap \zeta $ is bounded in terms of the degree of $f^N$ on $G'$. There is an arc $\zeta '\subset e$  such that, for each $0<i<N$, $\zeta '$ is either disjoint from   $f^{-i}(G')$ or contained in $f^{-i}(G')$. To see this, if $e\cap f^{-i}(G')$ has empty interior for each $0<i<N$, then $e\cap \bigcup _{0<i<N}f^{-i}(G')$ has empty interior, and we simply choose 
  $$\zeta '\subset e\setminus \bigcup _{0<i<N}f^{-i}(G').$$
   Otherwise, choose a maximal set $A\subset \{ i:0<i<N\}$ such that $e\cap \bigcup _{i\in A}f^{-i}(G')$ has interior. Let $e'$ be a component of this interior and then choose 
   $$\zeta '\subset e'\setminus \bigcup _{0<i<N,i\notin A}f^{-i}(G').$$
   Thus, the choice of  $\zeta '$ only depends on $e$ and the diameter of $\zeta '$ depends only on $e$
    Let $S$ be the local inverse of $f^{nN}$ mapping $f^{nN}(x)$ to $x$. Any component of $f^{-nN}(\zeta ')$ is either in $f^{-i}(G')$ or disjoint from $f^{-i}(G')$, noting that a component of $f^{-N}(G'\cap f^{-i}(G'))$ can be in $G'\cap (f^{-i-N}(G'))\setminus f^{-i}(G')$. Then  we use the fact that $S$  has bounded distortion independent of $n$ to obtain $C_2$, which is bounded in terms of the choice of $\zeta '$ for the edge $e$, for each of the finitely many edges $e$. 
\ep 

\Comment{\color{red}{Comment (55) Full stop added at end of second sentence. 

Comment (56) $\zeta $ corrected to $\zeta '$. 

Comment (57). Explanation is expanded on why we can choose $\zeta '$. This is really not a very strong statement. It was probably misleading to talk about the constant $\delta _0$ because it suggests that $\zeta '$ can be chosen to have diameter greater than some previously given bound. That is not the case and is not needed. So the reference to the diameter of $\zeta '$ has been removed. 

Comment (58) Missing bracket added on $(f^{-i-N}(G'))$ }}

\begin{lemma}\label{1.15}
Let $K_1>1$ be sufficiently large and $\delta _0>0$ sufficiently small, and $0<i<N$. Let $\gamma _1$ and $\gamma _2$ be two arcs of $f^{-i}(G')$ of diameter $\le \delta _0$ with endpoints $x_1$ and $y_1$, $x_2$ and $y_2$ in $G'$,  but otherwise not intersecting $G'$ and with $\gamma _2$ inside the disc bounded by $\gamma _1$,  and $G'$, with $x_2$ separating $y_1$ and $y_2$ from $x_1$.  Then 
$$\mbox{Min}(d(x_1,x_2),d(y_1,y_2))\ge K_1^{-1}\mbox{diam}(\gamma _1).$$ \end{lemma}

\Comment{\color{red}{$K_2$ of version 5 is changed to $K_1$ here.  $0<i<N$ added in the first sentence.  A number of commas added, and broken up into more sentences.}}

\noindent{\em{Proof.}} Let $K_1$ and $\delta _0$ be as in \ref{1.13} We can assume $K_1>1$. Also,  we can assume that $\delta _0$ is small enough that
$$3\delta _0\le \mbox{min}\{ d(v_1,v_2)\mid v_1,\ v_2\mbox{ are distinct vertices of }f^{-i}(G')\} ,$$
where $d$ denotes spherical distance. If $x_i$ and $y_i$ are the endpoints of $\gamma_i$ and $d(x_1,x_2)<K_1^{-1}\mbox{diam}(\gamma _1)$ then there is an arc $\gamma '$ on $f^{-i}(G')$ of diameter $<\mbox{diam}(\gamma _1)$ joining $x_1$ and $x_2$. Clearly $\gamma '\setminus \{ x_1\} $ cannot contain $\gamma _1\setminus \{ x_1\} $, and hence is disjoint from $\gamma _1$, since the endpoint $x_1$ is also an endpoint of $\gamma _1$, and the other endpoint of $\gamma '$ is not in $\gamma _1$.  
Then $\gamma =\gamma _1\cup \gamma'\cup \gamma _2$ is an arc of diameter  $\ge \mbox{diam}(\gamma _1)$  and $<3\delta _0$ with endpoints $y_1$ and $y_2$, whether or not $\gamma '$ contains $\gamma _2$. By the restrictions on $\delta _0$, any other arc in $f^{-i}(G')$ joining $y_1$ and $y_2$ has diameter $\ge 3\delta _0$,  and hence has diameter greater than $\gamma $ .  So $d(y_1,y_2)\ge K_1^{-1}\mbox{diam}(\gamma _1)$
 as required. 
 
\Comment{\color{red}{Comment (59) $\gamma '$ replaced by $\gamma '\setminus \{ x_1\} $ and other adjustments made to reflect the fact that $\gamma '$ and $\gamma _1$ have $x_1$ as a common endpoint

Comment (60) $K_1^{-2}\ge \mbox{diam}(\gamma _1) $ corrected to $K_1^{-1}\mbox{diam}(\gamma _1) $. I think now that $K_1^{-2}$ can be replaced by $K_1^{-1}$. 

Comment (61). This restriction on $\delta _0$ (or a slight adjustment) has been added in at the start of the proof - thanks. }}
\ep

\begin{lemma}\label{1.16} There is $\lambda <1$ and $C$ such that  for any nested sequence $\gamma _n$ for $G'$ and $f^{-i}(G')$, with $0<i<N$,
$$\mbox{diam}(\gamma _n)\le C\lambda ^{n-m}\mbox{diam}(\gamma _m)$$ 
for all $n>m$.\end{lemma}

\Comment{\color{red}{Comment (62)$f^i$ corrected to $f^{-i}$}}

\noindent{\em{Proof}}.

 \Comment{\color{red}{ Comment (63) ``$\gamma _n$.'' replaced by ``$\gamma _n$,'' .  ``with $0<i<N$,'' added in. After this the proof of this lemma has been rewritten, principally to deal with the problem identified in Comment (65). But first more detail is given on why it is enough to simply show that $\mbox{diam}(\gamma _{m+\ell })<\frac{1}{2}\mbox{diam}(\gamma _{m})$ for some $\ell \le k$. 
 
 I think Comment (64) has been dealt with in the course of rewriting prompted by Comment (66) - though this was one of the problems I identified before I got the report. The proof now uses an area argument There were two different uses of $C$ in the previous version of the proof.}}

 It suffices to show that  there exists $k$ such that $\mbox{diam}(\gamma _{m+\ell })<\frac{1}{2}\mbox{diam}(\gamma _{m})$ for some $\ell \le k$, for any nested sequence $\gamma _n$, and any $m$. 
  For then there are $\gamma _{m_j}$ with $m_0=m$ and $0<m_{j+1}-m_j\le k$ and 
 $$\mbox{diam}(\gamma _{m_{j+1}})< \frac{1}{2}\mbox{diam}(\gamma _{m_j})$$
  for all $j\ge 0$. Then if $\eta _j$ is the arc on $G'$ with the same endpoints as $\gamma _{m_j}$, we have 
 $$\mbox{diam}(\eta _j)\le K_1\mbox{diam}(\gamma _{m_j})$$
and
$$\mbox{diam}(\gamma _\ell )\le K_1^2\mbox{diam}(\gamma _{m_j}),\ \ m_j\le \ell ,$$
which gives the full result. 

Let $\varepsilon _0>0$ be such that the distance between vertices of $f^{-i}(G')$ is $\ge 4K_1^2\varepsilon _0$. We can  assume that  the diameter of $\gamma _n$ is $\le \varepsilon _0$ for all $n$. Let $\eta $ be the arc of $G'$ of diameter $\le K_1\mbox{diam}(\gamma _m)\le K_1\varepsilon _0$ which has the same endpoints as $\gamma _m$.  Then, for each $n$, $\gamma _n$ and $\gamma _m$ lie in an arc of $f^{-i}(G')$ of diameter $\le K_1^2\varepsilon _0$ with both endpoints in $\eta$. By breaking into two subsequences if necessary,  but both containing $\gamma _m$, we can assume that all the $\gamma _n$ are in an arc $\zeta $ of $f^{-i}(G')$ of diameter $\le 2K_1^2\varepsilon _0$ which contains at most one vertex of $f^{-i}(G')$.

So suppose that $\mbox{diam}(\gamma _{m+\ell})\ge \frac{1}{2}\mbox{diam}(\gamma _{m})$ for $\ell \le k$.   It is convenient to use the Euclidean metric rather than spherical metric at this point, assuming as we may do that the constants $K_1$ and $\delta _0$ are respectively large enough and small enough to work for both metrics. So diameter now refers to Euclidean diameter.  Divide each $\gamma _{m+\ell }$ into two arcs $\gamma _{m+\ell,1}$ and $\gamma _{m+\ell,2}$ which are disjoint apart from having a common endpoint, and each having diameter $\ge \frac{1}{4}\mbox{diam}(\gamma _{m})$. Thus, each of $\gamma _{m+\ell,1}$ and $\gamma _{m+\ell,2}$ has one endpoint in $\eta $. We call these endpoints $x_{m+\ell,1}$ and $x_{m+\ell,2}$ respectively. We choose numbering so that, if $\ell _1<\ell _2$, then $x_{m+\ell_2,1}$ separates $x_{m+\ell_1,1}$ from $x_{m+\ell_1,2}$ and $x_{m+\ell_2,2}$ in $\eta $. 
 Consider $K_1^{-1}\mbox{diam}(\gamma _m)/2^4$-neighbourhoods $N_{m+\ell ,j}$ of each $\gamma _{m+\ell,j }$ for $1\le \ell \le k$. These sets all lie in the set of diameter $\le 2K_1\mbox{diam}(\gamma _m)$ bounded by $\gamma _m$ and $\eta $, which in turn is contained in a Euclidean square with side length $\le 2K_1\mbox{diam}(\gamma _m)$ and hence the Euclidean area of the set bounded by $\gamma _m$ and $\eta $ is  $\le 4K_1^2(\mbox{diam}(\gamma _m))^2$. Meanwhile, the Euclidean area of $N_{m+\ell ,j}$ satisfies
 $$\mbox{area}(N_{m+\ell,j})\ge \frac{1}{2^2}\mbox{diam}(\gamma _{m})\cdot \frac{1}{2^4}K_1^{-1}\mbox{diam}(\gamma _m)$$ 
 $$=\frac{K_1^{-1}}{2^6}(\mbox{diam}(\gamma _m))^2.$$
 If $k>2^9K_1^3$, then there are distinct integers $\ell _j$, for $j=1$, $2$, $3$, with $0< \ell _i\le k$ with $N_{m+\ell _j,1}\cap N_{m+\ell _1,1}\ne \emptyset $ for $j=2$, $3$. This means that there are points $x_2$ and $x_3$ on $\gamma _{m+\ell _1,1}$ which are distance  $\le K_1^{-1}\mbox{diam}(\gamma _m)/8$ from points $y_2$ and $y_3$ on $\gamma _{m+\ell _2,1} $ and $\gamma _{m+\ell _3,1}$. If $y_2$ and $y_3$ are not separated in $\zeta $ from $x_2$ and $x_3$, then one of $y_2$ and $y_3$ separates the other from $x_2$ and $x_3$. So then either the arc in $\zeta $ between $x_2$ and $y_2$ contains $\gamma _{m+\ell _3}$, or the arc between $x_3$ and $y_3$ in $\zeta $ contains $\gamma _{m+\ell _2}$. If $y_2$ and $y_3$ are separated in $\zeta $ by $x_2$ and $x_3$, then either the arc in $\zeta $ between $x_2$ and $y_2$ contains $\gamma _{m+\ell _1,2}$ or the arc in $\zeta $ between $x_2$ and $y_2$  contains $\gamma _{m+\ell _1,2}$.  In all cases, in $\zeta $ between $x_2$ and $y_2$ or between $x_3$ and $y_3$  has diameter $\ge \frac{1}{4}\mbox{diam}(\gamma _m)$. This gives the required contradiction. 
\ep

\Comment{\color{red}{ In the first sentence of Lemma \ref{1.8}, more precise conditions are put on $\varepsilon _0$. The sentence about spherical metric has been removed, as that is now in the introduction.  I think Comment (68), about precision on the quantifier $N_1$, refers to the fact that the definition of nested sequence depends on a constant $\varepsilon _0$. This has now been incorporated into the second sentence of \ref{1.8}.}}
   
\begin{lemma}\label{1.8} Let $\varepsilon _0>0$ be given. Let the definition of nested sequences in \ref{1.6} be relative to this $\varepsilon _0$.  Then there exists $N_1$ and a finite  collection ${\mathcal{B}}$ of disjoint  closed contractible sets with locally connected boundaries, such that the following hold.
\begin{enumerate}
\item $\mbox{diam}(B)<\varepsilon _0$ for each $B\in{\mathcal{B}}$.
\item If $B_1\in {\mathcal{B}}$ and $B_2$ is a component of $f^{-1}(B_1)$, then either $B_2\cap B=\emptyset $ for all $B\in {\mathcal{B}}$ or $B_2\subset B_3$ for some $B_3\in{\mathcal{B}}$. 

\Comment{\color{red}{ Comment (67) $B_2\in B_3$ corrected to $B_2\subset B_3$. $f^{-1}$ replaces $f^{-n}$ above. }}

\item Let $\gamma _n$ be any nested sequence of arcs for $f^{-i}(G')$ and $f^{-j}(G')$ for any $0\le i,  j<N$ and $i\ne j$, or for $(f^{-i}(G'),f^{-\ell }(G'))$ and $f^{-j}(G')$ for distinct $i$, $\ell $, $j$ with $0\le i,j,\ell <N$. Then there is $B\in {\mathcal {B}}$ and $m$ and a component $B'$ of $f^{-m}(B)$   such that $\gamma _n\subset B'$ for all $n\ge N_1$. 
\item If $B_1\in{\mathcal{B}}$ then $B_1$ contains a component of $f^{-1}(B_0)$ for at least one $B_0\in {\mathcal{B}}$.

\Comment{\color{red} {This last property has added in -- although by \ref{1.7} it follows from Property 3.}}
\end{enumerate}
 \end{lemma}
 
 \noindent{\em{Proof.}}  Let $0\le i\le N$. By \ref{1.7} and \ref{1.16}, given $\varepsilon _1$, there is a finite set of nested sequences for $G'$ and $f^{-i}(G')$, say $\gamma _{n,j,i}$ ($n\ge 1$) for $1\le j\le r(i)$, where  $\gamma _{n,j,i}$ has the endpoints as the arc $\eta _{n,i,j}\subset G'$ of diameter $\le \varepsilon _1$,  and there is $N_1$ depending on $\varepsilon _0$  and $\varepsilon _1$ such that the following holds. Let   $D_{j,i}$, be the closed topological disc bounded by $\gamma _{1,j,i}\cup \eta _{1,j,i}$.
Let  $\gamma _n$  be any nested sequence for $G'$ and $f^{-i}(G')$. Let $\eta _n$ be the arc of $G'$ of diameter $\le K_1\varepsilon _0$ with the same endpoints at $\gamma _n$.   Let $D_n$ be the topological disc bounded by $\eta _n\cup \gamma _n$. Then there is $j\le r(i)$ and $m\ge 0$ such that, for all $n\ge N_1$,  
$$D_n\subset f^{-mN}(D_{j,i}).$$

\Comment{\color{red}{Rewritten in response to Comment (69) and also to put a precise bound on the diameter of any nested sequence. The definition of $D_{j,i}$ has been changed to be bounded by $\gamma _{1,j,i}\cup \eta _{1,j,i}$ and similarly  for $D_{j,i,\ell }$ in the paragraph below.  In the process, Comment (70) has been dealt with. }}
 
  We have a similar finite  set of nested sequences for $(G',f^{-i}(G'))$ and $f^{-\ell }(G')$. We call these sequences $\gamma _{n,j,i,\ell }$ ($n\ge 1$) for $1\le j\le r(i,\ell )$. We write  $\eta _{n,j,i,\ell }$ for  the arc of diameter $\le \varepsilon _0$ which is the union of an arc in $G'$ and an arc in $f^{-i}(G')$ which has the same endpoints in $f^{-\ell }(G')$ as $\gamma _{n,j,i,\ell }$.  Let $D_{j,i,\ell }$ be the topological disc bounded by $\gamma _{1,j,i,\ell }\cup \eta _{1,j,i,\ell }$. Let $\gamma _n$ be any nested sequence for $(G',f^{-i}(G'))$ and $f^{-\ell }(G')$ such that that $\gamma _n$ shares endpoints with an arc $\eta _n$ of diameter $\le \varepsilon _0$ which is a union of an arc in $G'$ and an arc in $f^{-i}(G')$. Let $D_n$ be the topological disc bounded by $\gamma _n\cup \eta _n$. 
Then there is $j\le r(i,\ell )$ and $m\ge 0$ such that, for all $n\ge N_1$,  
$$D_n\subset f^{-mN}(D_{j,i,\ell }).$$
 
   Let $K_1$ and $\delta _0$ satisfy the conditions of  \ref{1.13} and \ref{1.15}. 
 Let ${\mathcal{S}}_n$ denote the set of univalent local inverses of $f^n$ with domains of diameter $\le 2K_1\delta _0$. We also assume that $\delta _0$ is small enough, and $K_1>1$ large enough, that  $|S'|\le K_1$ for any $S\in{\mathcal{S}}_n$ with domain intersecting $f^{-\ell }(G')$ for any $n$, $\ell \ge 0$, and $|S'|<1$ for $S\in {\mathcal{S}}_{nN}$, for any $n>0$.
%     Write
%  $${\mathcal{B}}_{0,i}=\{ S(D_{j,i}):j\le r(i),\ S\in\bigcup _{n\ge 0}{\mathcal{S}}_{nN},\ \partial S(D_{j,i})\subset G'\cup f^{-i}(G')\} ,$$
%  $${\mathcal{B}}_{0,i,\ell}=\{ S(D_{j,i,\ell}):1\le j\le r(i,\ell), S\in\bigcup _{n\ge 0}{\mathcal{S}}_{nN}, \ \partial S(D_{j,i})\subset G'\cup f^{-i}(G')\cup f^{-\ell }(G')\} ,$$
%  $${\mathcal{B}}_{n,i}=\{ S(D):D\in{\mathcal{B}}_{0,i},\  S\in {\mathcal{S}}_n \} ,$$
%  $${\mathcal{B}}_{n,i,\ell}=\{ S(D):D\in{\mathcal{B}}_{0,i,\ell},\  S\in {\mathcal{S}}_n \} ,$$
%   $${\mathcal{B}}_n=\bigcup _{0<i<N}{\mathcal{B}}_{n,i}\cup \bigcup _{0< i,\ell<N,i\ne \ell}{\mathcal{B}}_{n,i,\ell}.$$
%  We define
%$$\Omega _{n,i}=\bigcup {\mathcal{B}}_{n,i},$$
%$$\Omega _{n,i,\ell }=\bigcup {\mathcal{B}}_{n,i,\ell },$$
%$$\Omega _n=\bigcup {\mathcal{B}}_n,$$

% $$\Omega =\bigcup _{n=0}^\infty \Omega _n.$$

We define
$${\mathcal{B}}_0=\{ D_{i,j}:0<i<N,1\le j\le r(i)\} \cup \{ D_{i,\ell,j}:0<i,\ell<N,i\ne \ell,1\le j\le r(i,\ell)\} ,$$
and 
$$\Omega (0)=\bigcup _{n=0}^\infty \bigcup \{ S(D):S\in {\mathcal{S}}_{nN}, D\in {\mathcal{B}}_0,\ S(\partial D)\subset G^0\} ,$$
$$\Omega (n)=\bigcup _{i=0}^{n}f^{-i}(\Omega (0)),\ \ \  0\le n\le \infty ,$$
$$\Omega (k;N)=\bigcup_{i=0}^kf^{-iN}(\Omega (0)),$$
$$\Omega (r,k;N)=\bigcup _{i=0}^kf^{-iN}(\Omega (r)).$$
Thus, $\Omega (N-1,k-1;N)=\Omega (kN-1)$. 
 
 \Comment{\color{red}{ Notation has been changed. I have used the bracket notation e.g. $\Omega (0)$ to distinguish from the notation $\Omega _0$ in \ref{1.9}.The sets ${\mathcal{B}}_{ , }$ have been removed, apart from ${\mathcal{B}}_0$ which has been redefined.
 
 Comment (71): the notation  ${\mathcal{S}}_n$ has been introduced to correct and clarify the definition of $\Omega (0)$, which replaces $\Omega _0$
 
 Comment (72) $\Omega (n)$, which essentially replaces $\Omega _n$, is now defined differently, but is essentially what $\Omega _n$ was before (or should have been).
}}

    Now we claim that  there is an integer $k_1$, and a constant $C_1$ independent of $\varepsilon _1$ and $k_1$, such that, if $\varepsilon _1$ is sufficiently small, then all components of $N_{\varepsilon _1}(\Omega (k_1N-1))$ have diameter $\le C_1\varepsilon _1$, where $N_\varepsilon (X)$ denotes the $\varepsilon $-neighbourhood of $X$. We shall then show that the components of $\Omega(\infty ) $ which intersect $\Omega (k_1N-1)$ are contained in $N_{\varepsilon _1/2}(\Omega (k_1N-1))$. The closure of each such component then bounds a closed contractible set with locally connected boundary of diameter $\le C_1\varepsilon _1$. We then choose $\varepsilon _1$ so that $C_1\varepsilon _1<\varepsilon _0$.
   Our required set ${\mathcal{B}}$ is then the set of maximal unions $B$  of closures of components of $\Omega (\infty )$ that intersect $\bigcup {\mathcal{B}}_0$,   and  complementary components  of diameter $\le \varepsilon _0$ bounded by them.  Properties 1 and 2 then hold by construction. Thus, each such $B$ contains at least one of the discs $D_{j,i}$ or $D_{j,i,\ell}$, for $j\le r(i)$  for some $0< i<N$, or $j\le r(i,\ell )$ for $0<i,\ell <N$, $i\ne \ell $. Thus, the number of sets in ${\mathcal{B}}$ is finite. Properties 3 and 4 hold by \ref{1.7}.  Note that although  sets in  ${\mathcal{B}}_{0}$ are topological discs, they are not necessarily disjoint. So the sets in $\Omega (0)$ need not be topological discs, the more so for the sets in $\Omega (n)$.
   
   \Comment{\color{red}{ Reference to $N_1$ been removed in the paragraph above. Instead it is emphasised that $C_1$ is independent of $\varepsilon _1$. The definition of ${\mathcal{B}}$ has been rewritten,  to make it clear that the set ${\mathcal{B}}$ is finite, the sets in ${\mathcal{B}}$ are disjoint and of diameter $<\varepsilon _0$. This is in response to questions in Comment (73). In response to Comment (74), an explanation has been added to explain why the first choice of components might not be topological discs. }}
      
  Note that the results of \ref{1.13} and \ref{1.5} work for $f^{-nN-i}(G')$ provided that $K_1\delta _0$ is replaced by half the minimal distance between vertices of $f^{-nN-i}(G')$.  Let $C_2$ be as in \ref{1.14}. Also, using bounded distortion of local inverses of $f^{nN}$ on $G'$, we can choose $C_2$ to work with $G'$ replaced by $f^{-nN}(G')$ in \ref{1.14}  for any $n\ge N$, that is, any arc $\zeta \subset f^{-nN}(G')$ of diameter $\ge C_2\delta $, which is contained in the union of two edges of $f^{-nN}(G')$, contains an arc $\zeta _1$ of diameter $\ge \delta $, such that for each $0<i<N$, $\zeta _1$ is either contained in $f^{-i-Nn}(G')$ or disjoint from $f^{-i-Nn}(G')$.
We also assume that $C_2\ge 6$. Note that $C_2$ is independent of $\varepsilon _1$. This is important, because at different stages of the proof we will want to make further restrictions on $\varepsilon _1$. For the moment, we assume that $3C_2K_1^N\varepsilon _1<\delta _0$.

\Comment{\color{red}{Most of the paragraph above has been added in.  Some lower bounds on $C_2$ have been moved to this paragraph or made more precise}}

Define $\varepsilon _k=K_1^{k-1}\varepsilon _1$ for $1\le k\le N$. Then for any set $X$ of diameter $\le \delta _0$ intersecting $f^{-n}(G')$ for any $n\ge 0$, we have $N_{\varepsilon _{k-1}}(f^{-1}(X))\subset f^{-1}(N_{\varepsilon _k}(X))$ for $2\le k\le N$. We claim that  components of $N_{\varepsilon _N}(\Omega (0))$ have diameter $\le 3C_2K_1^{N-1}\varepsilon _1$. For if $D_1$ and $D_2$ are intersecting sets  of ${\mathcal{B}}_0$, then $D_1\cup D_2$ has diameter $\le 2\varepsilon _1$, and  $D_1\cap G'$ and $D_2\cap G'$ lie in an arc of $G'$ of diameter $\le 2K_1\varepsilon _1$. But any arc of $G'$ starting from $D_1\cap G'$ of diameter $\ge (5/2)C_2K_1^{N-1}\varepsilon _1$ contains an arc of diameter $\ge (5/2)K_1^{N-1}\varepsilon _1$ which is disjoint from $\Omega (0)$. So the set of points in $G'\cap B$  for a component $B$ of $N_{\varepsilon _N}(\Omega (0))$ is contained in a union of at most two arcs of $G'$, containing at most one vertex, of diameter $\le (5/2)C_2K_1^{N-1}\varepsilon _1$. So
  $$\mbox{diam}(B)\le (5/2)C_2K_1^{N-1}\varepsilon _1+\varepsilon _1+2K_1^{N-1}\varepsilon _1\le 3C_2K_1^{N-1}\varepsilon _1.$$

 \Comment{\color{red}{Comment(75): $\varepsilon $ changed to $\varepsilon _1$.}}

Now let an integer $k_1>1$ be given. Write $C_3=(3C_2)^NK_1^{3N-3}$.  Let  $\varepsilon _1$ be  small enough given $k_1$ that $C_3\varepsilon _1<\delta _0$, and that the distance between distinct vertices of $f^{-n}(G')$ and $f^{-m}(G')$   for $0\le m,\ n\le k_1N+N$ is $\ge 2C_3\varepsilon _1$, as is the minimum distance between any two disjoint   edges of $f^{-n}(G')$ for $0\le n\le k_1N+N$.  For $D\in {\mathcal{B}}_0$, all components of $f^{-n}(D)$ have diameter $\le K_1\varepsilon _1$. Then in the same way as for $\Omega (0)$, all components of $N_{\varepsilon _N}(\Omega (k_1;N))$ have diameter $\le 3C_2K_1^{N-1}\varepsilon _1$.

  \Comment{\color{red}{ 
  The paragraph on vertices of $f^{-n}(G')$in the boundary of $f^{-n}(G')\setminus f^{N-n}(G')$ has been removed. The bound on the diameter of components of $N_{\varepsilon _1}(\Omega (k_1;N))$ is now proved more directly in the paragraph above.}}
   
 \Comment{\color{red}{Comment (77) $\Omega _{kN}$ no longer appears below, as all the names of the $\Omega $ sets have changed. The following two paragraphs correspond roughly  to the first full paragraph on page 15 of version 5. The  initial bound proved is on the diameter of the component of the union of $\varepsilon _1$ neighbourhoods. In version 5 this was glossed over and some details were missed. The general inductive method is the same but numbers have changed. $C_{1,0}$ is no longer used. Comment (76): Yes $C_{1,\varepsilon _1}$ concerning the constant $C_{1,0}$ which is no longer used. A new constant $C_3$ has been introduced. The was one instance of $C_3$ in the corresponding paragraph of version 5 but that should have been $C_2$.}}

 Now we will apply \ref{1.14} to prove that the diameter of any  component of  $N_{\varepsilon _{N-r}}(\Omega (r,k_1-1;N))$ is $\le K_1^{2r+N-1}(3C_2)^{r+1}\varepsilon _1$, by induction on $r<N$.  In this way we will show that the diameter of  any component of $N_{\varepsilon _1}(\Omega (k_1N-1))$ is $\le K_1^{3N-3}(3C_2)^N\varepsilon _1=C_3\varepsilon _1$. 

We have the bound for $r=0$.  Suppose inductively that  the bound holds for $r<N-1$. Let $B$ be a component of $N_{\varepsilon _{N-r-1}}(\Omega (r+1,k_1-1;N))$. Then $B$ is contained in a union $B'$ of components of $N_{\varepsilon _N}(\Omega (k_1-1;N))$ and components of $f^{-1}(N_{\varepsilon _{N-r}}(\Omega (r,k_1-1,N)))$. Now 
$$\mbox{diam}(B_1) \le K_1^{2r+N}(3C_{2})^{r+1}\varepsilon _1$$
for any component $B_1$ of $f^{-1}(N_{\varepsilon _{N-r}}(\Omega (r,k_1-1;N)))$.  To obtain the bound for $\mbox{diam}(B')$, we  consider  any two  components $B_0$, $B_2$ of $N_{\varepsilon _N}(\Omega (k_1-1;N))$ and  a component $B_1$ of  $f^{-1}(N_{\varepsilon _{N-r}}(\Omega (r,k_1-1;N)))$, such that $B_0\cap B_1\ne \emptyset $ and $B_1\cap B_2\ne \emptyset $. We have
$$\mbox{diam}(B_0\cup B_1\cup B_2)\le (K_1^{2r+N}(3C_{2})^{r+1}+6C_2K_1^{N-1})\varepsilon _1\le (3/2)K_1^{2r+N}(3C_2)^{r+1}\varepsilon _1,$$
using $6C_2\le 2C_2^2$. Then any arc of $G'$ with endpoints in $(B_0\cup B_2)\cap G'$ has diameter $\le 3K_1^{2r+1+N}(3C_2)^{r+1}\varepsilon _1=\delta $. Then we apply the separation property \ref{1.14} for  this $\delta $
    and $x\in B_2\cap G'$. 
  Within $C_2\delta $ of $x$ along any arc of $f^{N-k_1N}(G')$, there is an arc of diameter $\delta $ which does not intersect $f^{-k_1N-i}(G')$ for any $0<i<N$. So there is a union of at most two  arcs of $f^{N-k_1N}(G')$ (containing at most one vertex) containing all the points of $\Omega (k_1-1;N)\cap f^{N-k_1N}(G')$ in $B'$ and of diameter $\le C_2\delta $. We have 
  $\delta >  2K_1^{N-1}\varepsilon _1$.  So using the bounds on $\mbox{diam}(B_1)$ and $\mbox{diam}(B_0)$,  and $C_2\ge 4$, we have
  $$\mbox{diam}(B')\le ((3/2)C_2K_1^{2r+1+N}(3C_2)^{r+1} + 2K_1^{r+N}(3C_2)^{r+1})\varepsilon _1$$
  $$< K_1^{2r+1+N}(3C_2)^{r+2}\varepsilon _1.$$
  So the inductive step is completed.

 Now if $\varepsilon _1>0$ is sufficiently small, we can choose $k_1$ large enough that 
$$\lambda ^{k_1}C_3<\frac{1}{3},$$
where $\lambda <1$ is such that $f^{-nN}$ contracts by a factor $\lambda ^n$ on the $2K_1\delta _0$ neighbourhood of $G'$, for $n\ge k_1$.
 Now let $B$ be any component of $\Omega (k_1N-1)$. Let $B^n$ be defined inductively by $B^0=B$ and $B^{n+1}$ is the union  of $B^n$ and any components of 
 $f^{-(n+1)k_1N}(\Omega (k_1N-1))$
 that it intersects. These components have diameter $<3^{-(n+1)}\varepsilon _1$.
  Then $B^n\subset N_{\varepsilon _1/2}(B)$ for all $n$.  It follows that the component of $\Omega (\infty ) $ which contains $B$ has diameter $\le C_3\varepsilon _1+\varepsilon _1$. So we take 
  $C_1=C_3+1$. We then define ${\mathcal{B}}$ as described at the start of the proof.
 
\Comment{\color{red}{ 
 ``if $N_1$ sufficiently large'' replaced by ``if $\varepsilon _1$ is sufficiently small''. 
 
 ``$f^{-n}$ contracts by a factor $\lambda ^n$ in a sufficiently small neighbourhood of $G^0$ with respect to the spherical metric'' corrected to ``$f^{-nN}$ contracts by a factor $\lambda ^n$ on the $2K_1\delta _0$-neighbourhood of $G'$ for $n\ge k_1$'' 
 
 ``any union of components of $\bigcup _{k\le k_1N}\Omega _{kN}$'' changed to  to ``any union of components of  $\Omega (k_1N-1)$'' 
 
 ``any components of $f^{-nN}\left(\bigcup _{k\le k_1}\Omega _{kN}\right) $'' changed to ``any components of $f^{-(n+1)k_1N}(\Omega (k_1N-1))$''. 
 
 Some rewriting in the sentence after that,  leading to ``These components have diameter $<3^{-(n+1)}\varepsilon _1$'' where $3^{-n}$ has been corrected to $3^{-(n+1)}$}} \ep
 
 \Comment{\color{red}{It has become clear that the following lemma is needed, as well as the corollary which follows it. Or at least, without it, the proof of \ref{1.1} in \ref{1.14}  is unnecessarily complicated.}}
 
 Now we have the following information about intersections between $G^0$ and $\partial \Omega _0$.
 \begin{lemma}\label{1.17}$\partial ( G^0\setminus\Omega _0)$ is contained in the backward orbit of a finite set of  periodic points. \end{lemma}
\noindent{\em{Proof.}} Let ${\mathcal{B}}_0$ be as in the proof of \ref{1.8}. Write
$$\Omega _{0,0}=\bigcup{\mathcal{B}}_0.$$
First  we consider $\partial (G^0\setminus \Omega _{0,0})$. 
 Let $x\in f^{-\ell }(G')\cap \partial (G^0\setminus\Omega _{0,0})$. We can assume that $x$ is not a vertex of $f^{-m}(G')$ for any $0\le m<N$, since these are eventually periodic. Suppose $x\in \gamma \cup \eta \subset \Omega _{0,0}$, where $\gamma \subset f^{-k}(G')$ is an arc in a nested sequence for $f^{-i}(G')$ and $f^{-k}(G')$, or for $(f^{-i}(G'),f^{-j}(G'))$ and $f^{-k}(G')$, with $0\le i,j,k<N$, and $\eta $ is the arc with the same endpoints as $\gamma $ in $f^{-i}(G')$, or $\eta =\eta^1\cup \eta ^2$ with   $\eta ^1\subset f^{-i}(G')$ and  $\eta ^2\subset f^{-j}(G')$. If $x\in \gamma \cap \eta $, or $x=\eta ^1\cap \eta ^2$ in the case when $\eta =\eta ^1\cup \eta ^2$, then it follows from \ref{1.7} that $x$ is in the backward orbit of a finite set  $Y_1$ of periodic points. If $x\in \gamma \setminus \eta $, then it again follows from \ref{1.7} that $x$ is in the backward orbit of $Y_1$, because $x\in f^{-\ell }(G')\cap f^{-k}(G')$ is an endpoint of an arc in $f^{-\ell }(G')$ which is disjoint from $f^{-k}(G')$. Similarly the proof is finished by \ref{1.7} if $x\in \eta \setminus \gamma $. 

 Now each component $B$  of $\Omega _0$ is the  Hausdorff limit of some sequence $B_k$ of components  of $\bigcup _{n=0}^kf^{-n}(\Omega _{0,0})$, where $B_k\subset B_{k+1}$ and $B_0$ is a component of $\Omega _{0,0}$. It suffices to prove that $\partial (G'\setminus B)$ is contained in the backward orbit of a finite set of periodic points when $B$ is {\em{periodic}}, that is, $B$ contains at least one  component of $f^{-n}(B)$ for at least one $n>0$. For any component $C$ of $\Omega _{0,0}$ is of the form $f^{-n}(B')$ for some periodic $B'$ and some $n\ge 0$. So let $B_i$ be the periodic sets in $\Omega _0$ for $1\le i\le r$. There are finitely many maps 
$$T_j:\bigcup _{i=1}^rB_i\to \bigcup _{i=1}^rB_i,\ \ 1\le j\le s$$
such that $T_j$ is a local inverse of $f$ on each $B_i$ and $T_j(B_i)\subset B_{k_{j,i}}$ for some $1\le k_{j,i}\le r$. Then every component of $f^{-n}(B_i)$ which is contained in $B_j$ for some $j$ is of the form $T_{m_1}\circ \cdots \circ T_{m_n}$ for some $1\le m_k\le s$. Write ${\mathcal{T}}_n$ for the set of maps  of the form $T_{m_1}\circ \cdots \circ T_{m_n}$. So ${\mathcal{T}}_1=\{ T_j:1\le j\le s\} $. The set 
$$X=\bigcap_{n\ge 1}\bigcup \{ TB_i:T\in{\mathcal{T}}_n, 1\le i\le r\} $$
is closed, nonempty (it is a decreasing intersection of closed sets), and satisfies $f(X)=X$. Also, $B_i\setminus X\subset B_i'$,  where 
$B_i'=\bigcup _{m\ge 0}B_{i,m}'$ and $B_{i,0}'=B_i\cap \Omega _{0,0}$ and 
$$B_{i,m}'=\cup \bigcup _{j=1}^r\bigcup _{n=0}^m\{ T(B_{j,0}'):T\in {\mathcal{T}}_n,T(B_j)\subset B_i\} .$$
So  $\partial(G^0\setminus B_{i,m}')$ is contained in the backward orbit of $\partial (G^0\setminus\Omega _{0,0})$, that is, in the backward orbit of the set $Y_1$ mentioned above. But  $\partial (G^0\setminus B_i)\cap B_{i,m}'\subset\partial (G^0\setminus B_{i,m})$. So it remains to prove that $X\cap \partial (G^0\setminus B_i)$ is contained  in the backward orbit of a finite set of periodic points. But 
$$X\cap \partial (G^0\setminus B_i)\subset \partial (G^0\setminus X)=\bigcup _{i=0}^{N-1}\partial (f^{-i}(G')\setminus X).$$
It suffices to consider $\partial (G'\setminus X)$. Now $f^N:G'\to G'$ maps $X\cap G'$ onto $X\cap G'$ and is expanding. We now employ the same standard argument as in \ref{1.7}. Any arc $\gamma $ with interior in   $G'\setminus X$ and endpoint in $X$  is mapped by $f^{nN}$, for some $n>0$, to contain an arc  $\zeta $ of diameter $>\delta $ for some specified $\delta >0$ with interior in   $G'\setminus X$, with one endpoint of $\gamma $ mapped to one end point of $\zeta $. So the endpoint of $\gamma $ is eventually periodic, in the backward orbit of a finite set of periodic points. 

\ep

\begin{corollary}\label{1.19} There exists a set  $\Omega _0$ such that the set of components of $\Omega _0$ satisfies the properties 1 to 4 of \ref{1.8} satisfied by  ${\mathcal{B}}$, and in addition, $\partial (G^0\setminus \Omega )$ is a finite set of eventually periodic points.\end{corollary}
\noindent{\em{Proof}} Let $\Omega _0'=\bigcup {\mathcal{B}}$ be as in \ref{1.8}, but with all components of diameter $<\varepsilon _0/2$. Let $|S'|\le K_1$ for all local inverses of $S$ of $f^n$, for all $n>0$,  defined on balls of radius $\varepsilon _0$ centred on points of $G^0$. Let $\delta \le \eta \le \varepsilon _0/4$ be such that the minimum distance between  any two components $\Omega _0'$ is $\ge 4\eta $ and such that if $\gamma $ is an arc  with interior in  $G'\setminus \Omega _0'$ of diameter $\le \delta $ with endpoints in $\partial \Omega _0$ then $\gamma \cup \partial \Omega _0'$ bounds a disc $D(\gamma )$ of diameter $\le \eta $.   Let $\Omega _0$ be the union $\Omega _0'$ and of all components of $f^{-n}(D(\gamma ))$ which intersect $\partial \Omega _0'$ for all arcs $\gamma $ of diameter $\le \delta /K_1$ with interior in $G'\setminus \Omega _0'$ and endpoints in $\Omega _0'$. Then $\Omega _0$ has all the required properties.\ep

  \begin{lemma}\label{1.9} As usual let $G^0= \bigcup _{0\le i<N}f^{-i}(G')$.

\Comment{\color{red}{ Comment (78) extra ``$=G^0$'' deleted. Definition of $Y_0$ has changed in view of \ref{1.17}, \ref{1.19}. There are a number of changes in the statement of this lemma. $G^1$ no longer appears. }}

 Let  an integer $N_0$ and $\varepsilon _1>0$ be given.  Let $\Omega _0$ be as in \ref{1.19}. Write
 $$\Omega _n=\bigcup _{i=0}^nf^{-i}(\Omega _i.$$
 
Let $Y_0$ be the  union of the vertices of $f^{-i}(G')$, for $0\le i<N$, whose forward orbits do not intersect $\Omega _0$,  and of the forward orbits of $\partial(G^0\setminus  \Omega _0)$. (By \ref{1.17}, \ref{1.19}, $Y_0$ is a finite set.)

\Comment{\color{red}{ Comment (79) is now redundant in view of more substantial rewriting. The integer $p_1$ is introduced below. 
}}
 
 There exist integers $p_0$ $p_1$, and for $N_0$ sufficiently large, there exist   $Y$ with $f^{p_0-N_0}(Y_0)\subset Y\subset f^{-N_0}(Y_0)$, a set $\Omega $ which is the  union of components of $\Omega _{N_0}$ which are intersected by $Y$,   and a finite collection ${\mathcal{R}}(G^0)$ of   closed connected subsets of $G^0$,  such that the following hold.

\Comment{\color{red}{   $G^1$ no longer appears. Property 4 is a bit different from before. The bound on $P\cap Y$  is more precise than before and now $P\cap \partial \Omega $ is at most a single point --- and in $Y$. The statement of Property 6 is stronger. It had not actually  been properly proved to there is an additional section at the end of the proof }}

 \begin{enumerate}
 \item $\bigcup {\mathcal{R}}(G^0)=G^0\setminus \mbox{Int}(\Omega )$. 
  \item For each $P\in{\mathcal{R}}(G^0)$,  $G^0\setminus P$ is connected. 
 \item The interiors of sets in ${\mathcal {R}}(G^0)$, as subsets of $G^0$, are disjoint.
 \item Let  $\partial _{G^0}P$ denote the boundary of $P$ as a subset of $G^0$, for any $P\in{\mathcal{R}}(G^0)$. 
Then
\begin{equation}\label{1.9.1}\bigcup \{ \partial _{G_0}P:P\in{\mathcal{R}}(G^0)\} =Y,\end{equation}
  and 
$$ \#(P\cap Y)\le p_1,\ \ \ \#(P\cap \partial \Omega _0)\le 1.$$
 \item $P$ has diameter $<\varepsilon _1$, for each $P\in{\mathcal{R}}(G^0)$. 
 \item For each $P$, $P'\in {\mathcal{R}}(G^0)$, and local inverse $S$ of $f$ defined on $P'$, if  the interiors of $P$ and $S(P')\cap G^0$, as subsets of $G^0$, intersect, then 
 $$S(P')\cap G^0\subset P.$$
 Consequently, $Y\subset f^{-1}(Y)$.

  \end{enumerate} \end{lemma}
  
  \begin{remark}The $\varepsilon _1$ here is not the same as in the proof of \ref{1.8}\end{remark}
 
\noindent{\em{Proof.}}

   We assume, redefining $N$ if necessary, that $G'$ is not contained in $f^{-i}(G')$ for $0\le i<N$. 
     Let $\delta _0>0$ and $K_1>1$ be such that any  local inverse $S$ of $f^n$ on any  ball $B$ of radius $\delta _0$ round a point of $G^0$ is univalent and satisfies $|S'|\le K_1$ on $B$ and $|S'(x)|/|S'(y)|\le K_1$ for all $x$, $y\in B$. We also assume that $K_1$ satisfies the conclusions of \ref{1.13} and  \ref{1.15}.

% and that $10\varepsilon _1$ is $\le $ the distance between any two points on $\partial \Omega _0\cap G^{1,0}$.

%We can assume without loss of generality that $p_1$ is $\ge $ the number of intersections of $G^{1,0}$ with $\partial \Omega _0$. Then $p_1$ is also $\ge $ the number of intersections of $G^{1,i}$ with $\partial \Omega _i$, since $G^0$ is forward invariant under $f$. 

Let $\varepsilon _2>0$ be given, to be chosen sufficiently small later, given $\varepsilon _1$. Take any $N_0$ sufficiently large that so that $f^{N-N_0-i}(Y_0)\cap f^{-i}(G')$ is $\varepsilon _2 $-dense in $f^{-i}(G')$
 for each  $0\le i<N$.    Write 
 $$X_{i,i}=f^{-i}(G')\setminus f^{N-i-N_0}(Y_0),$$
and for $i\le j$, write 
$$X_{i,j}=\bigcup _{i\le \ell \le j}X_{\ell ,\ell }.$$

\Comment{\color{red}{ Comment (80):$k_1$ changed to $N_0$ several times above.  $Y$ changed to $Y_0$. Definition of $X_{i,i}$ changed, in that $\Omega _{N_0}$  no longer appears.
 
 Comment (81)Done
 
 Comment (82) Right bracket added at end of definitions of $X_{i,i}$ and $X_{i,j}$.
 
 Comment (83) First instance of $\varepsilon _2$ below is corrected from $\varepsilon $
 
 The following proof has been rewritten. The bound on the diameter of components of $X_{0,r}$ has been refined. Also there was a mistake in the proof. It was clearly incorrect to make the points $x_i$ be in the same component of $X_{1,r+1}$. There are no points $x_i$ in the new version of the proof, which is now closer to what has been rewritten in \ref{1.8}. $K_0$ has become $K_1$.}}

Now we will prove by induction on $r$ that each component $B$ of $X_{0,r}$
 \begin{equation}\label{1.9.2}\mbox{diam}(B)\le \le K_1^{2r}(3C_2)^rd(B),\end{equation}
  where $d(B)$ is the maximum diameter of a component of $X_{j,j}$ in $B$, and $K_1$ is as stated at the start, and $C_2$ is as in  \ref{1.14}. We have the result for $r=0$.  
So now suppose the result is true for $r$.  We need to prove it for $r+1$.  The technique is very similar to one used in \ref{1.8}. By the inductive hypothesis, every component $B'$  of 
$X_{1,r+1}$ has diameter $\le K_1^{2r+1}(3C_2)^rd(B')$, where $d(B')$ is the maximum diameter of a component of $X_{i,i}$ in $B'$, for $1\le i\le r+1$. So now, assuming that $C_2$, $K_1>1$, we need to consider each component $B$ of  $X_{0,r+1}$. Since $B$ is path-connected we only need to bound the diameter of  each path in $B$. We already have the bound for a path which lies in a component of  $X_{1,r+1}$. Any other path in $B$ must intersect $G'$. If $x$, $y\in G'\cap B'$ for a component $B'\subset B$ of $X_{1,r+1}$, then by \ref{1.13}, $d(x,y)\le K_1^{2r+2}(3C_2)^rd(B')$. If $x\in G'\cap B'$ and $y\in G'$ and the open arc in $G'$ between $x$ and $y$ is in $B$ and  does not intersect $X_{1,r+1}$, then this open arc is disjoint from $f^{-N_0}(Y_0)$, and so $d(x,y)\le d(B)$. So applying \ref{1.14} with $\delta =K_1^{2r+2}(3C_2)^rd(B)$ and adding in components of $X_{1,r+1}$ at the ends of the path, if necessary, we see that the diameter of any path in $B$ is $\le C_2K_1^{2r+2}(3C_2)^rd(B)+2K_1^{2r+1}(3C_2)^rd(B)$. Since $C_2\ge 1$ and $K_1\ge 1$ this completes the inductive step.

  Write $C_3=K_1^{2N}(3C_2)^{N}$. Let $B$ be a component of $G^0\setminus f^{-N_0}(Y_0)$. Since $f^{N-i-N_0}(Y_0)\subset f^{-N_0}(Y_0)$, we have
$\mbox{diam}(B)\le C_3\mbox{diam}(Q)$, for some  component $Q$ of $f^{-r}(G')\setminus f^{N-r-N_0}(Y_0)$ in $B$, for some $0\le r<N$. Now consider $f^{N_0+r-N-p_2}(B)$, chosen so that 
$$ \delta _0/(C_3K_1^2)\le \mbox{diam}( f^{N_0-p_2}(Q))\le  \delta _0/(C_3K_1).$$ 
The lower bound on the diameter of $f^{N_0+r-N-p_2}(Q)$, if $p_2>0$, gives an upper bound on $p_2$ in terms of $\delta _0$, $C_3$ and $K_1$. Also, $f^{N_0+r-N-p_2}$ maps $B$ univalently with image of diameter $\le \delta _0$.   It then follows that 
$$\#(f^{-N_0}(Y_0)\cap B)\le \#(f^{-p_2}(Y_0)).$$

\Comment{\color{red}{The paragraph above is essentially new, and makes it clear that $B$ intersects boundedly finitely many points of $f^{-N_0}(Y_0)$. The following paragraph is also new.}}

Now for any component $B$ of $X_{i,i}$, for $0\le i<N$, we have  $d(B)<\varepsilon _2$. So  all components of $G^0\setminus f^{-N_0}(Y_0)$ have diameter $C_3\varepsilon _2$. We can also choose $\varepsilon _2$ so that $C_3\varepsilon _2$ is less than the minimum distance between points of $Y_0\cap \partial \Omega _0 $.
 
 We define 
 ${\mathcal{R}}'(G^0)$ to be the collection of sets of the form $\overline{X}\cup B$, where $X$ is a component of $G^0\setminus (f^{-N_0}(Y_0)\cup \Omega _{N_0})$, and $B$ is the (possibly empty) union of components of $\Omega _{N_0}$ such that $B\cap G^0\subset X$ is separated by $X$ from $G^0\setminus X$.
 
 Then   
 ${\mathcal{R}}'(G^0)$ is finite, because  $f^{-N_0}(Y_0)$ is finite and the number of  disjoint arcs of $f^{-i}(G')$ which can meet at a point of $f^{-N_0}(Y_0)$ is $\le 3$ for  each $0\le i<N$. The sets in  ${\mathcal{R}}'(G^0)$ are connected.  Property 3 holds. If we define $Y$ by (\ref{1.9.1}), then  Property 4 holds for ${\mathcal{R}}'(G^0)$ by the proofs above, for a sutable $p_1$. Property 1 holds, if we define $\Omega $ to be the union of components of $\Omega _{N_0}$ intersected by $Y$.  Property 5 holds by the proofs above, for suitable $\varepsilon _2$, given $\varepsilon _1$. So now we need to modify the sets of ${\mathcal{R}}'(G^0)$  to obtain ${\mathcal{R}}(G^0)$ which satisfies still satisfies these properties, for suitable $Y$ and $\Omega $, and also satisfy Properties 2 and 6.
  
  \Comment{\color{red}{Comment(84) ``The components of $G^1\setminus (f^{-N_0}(Y_0)\cup \Omega _{k_1})$ satisfy all the required properties for the sets ${\mathcal{R}}(G^0)$.''  (which has some errors in any case) removed and replaced by explanations above of which properties are satisfied by ${\mathcal{R}}'(G^0)$.   in any case $k_1$ should be $N_0$ and as pointed out in Comment (84) ``sets'' should be ``sets in'' 
 
 Comment (85) This expression has been removed in the rewriting.
 
 Comment(86) $C_1$ corrected to $R_1$ (below)
 
 Comment (87) ${\mathcal{R}}''(G^0)$ corrected to ${\mathcal{R}}'(G^0)$ in the definition of $B(R)$.
 
 Comment (88): Full stop added below}}

We order the sets in  ${\mathcal{R}}'(G^0)$ by: $R_1<R_2$ if $R_2$ bounds a  disc  of diameter $\le \varepsilon _0$ containing $R_1$. Let ${\mathcal{R}}''(G^0)$ be the set of maximal sets in ${\mathcal{R}}'(G^0)$ in this ordering.
 For any $R\in{\mathcal{R}}''(G^0)$, let
 $$B(R)=R\cup \bigcup\{ R'\in{\mathcal{R}}'(G^0)\cup {\mathcal{B}}':R'<R\} .$$
  
Then we define
 $${\mathcal{R}}(G^0,N_0)=\{ B(R): R\in {\mathcal{R}}''(G^0)\} .$$
  
  \Comment{\color{red}{Comment (89): ${\mathcal{R}}''$ corrected to ${\mathcal{R}}''(G^0)$ Comment (90).  }}
 
Now ${\mathcal{R}}(G^0,N_0)$ satisfies Property 2. But Property 6 is still a problem. The required set ${\mathcal{R}}(G^0)$ will be a a collection of sets from  ${\mathcal{R}}(G^0,N_0-p)$ for different values of $p\ge 0$. Note that each set of ${\mathcal{R}}(G^0,N_0-p)$ is contains any set in the collection ${\mathcal{R}}(G^0,N_0-q)$ that it intersects, if $q\le p$. We now investigate when a set $R_1$ of ${\mathcal{R}}(G^0,N_0-p)$  contains any set $S(R)$ that it intersects,  for $R\in {\mathcal{R}}(G^0,N_0-q)$ and $S$ a local inverse of $f$. 
If $R\in {\mathcal{R}}(G^0,n)$, then $S(R)\subset R_1$ for some $R_1\in {\mathcal{R}}(G^0,n)$ unless $S(R)\cap f^{-N}(G')\ne \emptyset $, when it is possible that $S(R)\cap G^0$ is disconnected and intersects more than one set in ${\mathcal{R}}(G^0,n)$. However we claim that, if $n$ is sufficiently large, there is an integer $p$, bounded independently of $n$, such that for any $R\in {\mathcal{R}}(G^0,n)$, we have $S(R)\subset R_1$ for some $R_1\in {\mathcal{R}}(G^0,n-p)$. 

First we show  that, given a constant $C_4$, for  $p$ depending on $C_4$ but not on $n$, if $y\in f^{p-n}(Y_0)\cap G'\setminus \Omega _{n-p}$ and 
$$\delta =\mbox{Min}\{ d(z,y):z\in f^{-n}(Y_0),\ z\ne y\} ,$$ then any arc of $f^{-N}(G')$in $B(y,C_4\delta )\setminus \{ y\} $ can only intersect at most one component of $G^0\setminus (\{ y\} \cup \Omega _{N_0-p})\cap B(y,C_4\delta )$ with $y$ in its closure. This follows using bounded distortion of local inverses of $f^m$ on a neighbourhood of $G^0$, for all $m$. There is $\delta _0>0$ such that if $y_0\in Y_0$, then an arc of $f^{-N}(G')\setminus \{ y_0\} $ in $B(y_0,\delta _0)\setminus \{ y_0\} $  can intersect at most one component of  $G^0\setminus (\{ y_0\}\cup \Omega _0) \cap B(y_0,\delta _0)$ with $y_0$ in its closure. Then choose $p$ so that $f^{-p}(Y_0)$ is  $\delta _0/(K_1C_4)$-dense in $f^{-i}(G')$ for all $0\le i<N$. Assume without loss of generality that $n-p$ is divisible by $n$. Choose $y_0$ with $y=Ty_0$ for a local inverse $T$ of $f^{n-p}$.  Then $T(B(y_0,\delta _0)\supset B(y,C_4\delta )$ and the result follows. It then follows from (\ref{1.9.2}) that, for a suitable $C_4$, if $R\in {\mathcal{R}}(G^0,n)$ and $S$ is a local inverse of $f$ then, since  $S(R)\cap G^0$ is contained in a single component of $G^0\setminus f^{p_2-n}(Y_0)$, then $S(R)\subset R_1$ for $R_1\in {\mathcal{R}}(G^0,n-p_3)$, for $p_3$ bounded independently of $n$. 

Now for $x\in G^0$ we define 
$$m(x)=\#(\{ i:x\in f^{-i}(G'),\ 0\le i<N\} .$$
Then $1\le m(x)\le m(f(x))\le N$. Define
$$M(x)=\mbox{Max}\{ m(f^n(x))-m(x):n>0\} .$$
Then $0\le M(x)\le N-1$. The sets 
$X_k=\{ x\in G^0:M(x)\le k\} $ are nonempty, closed and forward invariant under $f$ for each $0\le k\le N-1$. They also satisfy $X_k\subset X_{k+1}$ and $X_{N-1}=G^0$. For $x\in X_0$, define $R(x)$ and $R'(x)$ to be the sets of ${\mathcal{R}}(G^0,N_0)$ and ${\mathcal{R}}(G^0,N_0-p_3)$  which contain $x$. Then  $S(P)\subset G^0$ if $S$ is a local inverse of $f$ and $P=R(x)$ or $R'(x)$. So let ${\mathcal{R}}(G^0,N_0, X_0)\subset {\mathcal{R}}(G^0,N_0)$ be defined by
$$X_0\subset Z_0=\bigcup {\mathcal{R}}(G^0,N_0, X_0)=\bigcup \{ R'(x):x\in X_0\} .$$ 
 So $Z_0$ is  a union of sets in ${\mathcal{R}}(G^0,N_0-p_3)$. Similarly, we define, inductively, for $x\in X_k\setminus Z_{k-1}$, sets $R(x)\in {\mathcal{R}}(G^0,N_0-kp_3)$ and $R'(x)\in  {\mathcal{R}}(G^0,N_0-(k+1)p_3)$  which contain $x$. 
 $$X_k\setminus Z_{k-1}\subset  Z_k=\bigcup {\mathcal{R}}(G^0,N_0-kp_3, X_k)=\bigcup \{ R'(x):x\in X_k\setminus Z_{k-1}\} .$$ 
 Then we define
$${\mathcal{R}}(G^0)=\bigcup _{k=0}^{N-1}{\mathcal{R}}(G^0,N_0-kp_3,X_k).$$
So Property 6 holds for ${\mathcal{R}}(G^0)$.  Write $p_0=Np_3$. Using Property 4 as a definition of  $Y$, we have $f^{p_0-N_0}(Y_0)\subset Y$. By the same arguments as  for $X_{0,N}$, we have $\mbox{diam}(P)\le C_3'd(P)$ for a suitable constant $C_3'$, and also have the bound on $\#(P\cap f^{-N_0}(Y))$, for $P\in{\mathcal{R}}(G^0)$.  So Property 4  holds for  ${\mathcal{R}}(G^0)$. Property 5 holds, if $\varepsilon _2$ is sufficiently small given $\varepsilon _1$.  We define $\Omega $ to be the union of components of $\Omega _{N_0}$ which intersect $Y$. Then Property 1 holds. Property 2 holds by construction, since each set ${\mathcal{R}}(G^0,n)$ is of the form ${\mathcal{R}}''$ with $n$ replacing $N_0$. Property  3 holds because ${\mathcal{R}}(G^0)$ is constructed to be a partition of $G^0\setminus \Omega $. 
 
\Comment{\color{red}{Property 5 changed to Property 6. Comment (90). $Y$ and $\Omega $ are now defined in the statement of  this lemma, \ref{1.9}. There was a significant gap, at this point in version 5, of the proof that $Y$ and ${\mathcal{R}}(G^0)$ satisfy the very important Property 6. This has now been filled. 
}}

\ep

\subsection{The iterative construction of $G$.}\label{1.10}

Let $\Omega $ be as in \ref{1.19}, so that $\Omega $ satisfies the conditions of \ref{1.19}, and of $\bigcup{\mathcal{B}}$ in \ref{1.8}. Let $\varepsilon _0$ be as in \ref{1.8}. Let  $Y$, $\varepsilon _1$, ${\mathcal{R}}(G^0)$ be as in \ref{1.9}. Note that $G^0$
 is connected, and in fact path-connected and locally connected. Let $K_1$ and $\delta _0$ satisfy the conclusions  of \ref{1.13}, \ref{1.15} and \ref{1.18}, and satisfy $|S'|\le K_1$ for any local inverse of $f^n$ defined on the $\delta _0$ neighbourhood of a point in $G^0$, and also $|S'(x)|/|S'(y)|\le K_1$ for any $x$ and $y$ in a ball of radius $\delta _0$ centred on $G^0$. For the moment we assume that $K_1\varepsilon _1<K_1\varepsilon _0<\delta _0$.  Later, we shall make assumptions on $\varepsilon _1$  being sufficiently small. 

\Comment{\color{red}{ Comment (91) $Y$ added in. $G^1$ is no longer used, because of results proved in \ref{1.17} and \ref{1.18}. Comma added after $\varepsilon _1$.

Comment ( 92)
For the moment only mild conditions have been placed on on $\varepsilon _1$. 

Comment (93) These sentences about nested sequences have been removed. 
Comment (94)  ``or'' changed to ``For'' below 
 
Then:  Comment (95) ``$f^{-1}(P)$'' changed to ``$f^{-m}(P)$'' }}

For each $m\ge 0$ we also write ${\mathcal{R}}_m(G^0)$ for the set of components of sets $f^{-m}(P)$ for $P\in {\mathcal{R}}(G^0)$.
 Thus, ${\mathcal{R}}(G^0)={\mathcal{R}}_0(G^0)$, and ${\mathcal{R}}_m(G^0)$ is a partition of $f^{-m}(G^0)$. 

\Comment{\color{red}{ The following is deleted.
Write
$$\Omega _n=\bigcup _{m\le n}f^{-m}(\Omega )$$
and
$$\Omega _\infty =\bigcup _{m\ge 0}f^{-m}(\Omega ).$$

In the next line, $\Omega _n$ changed to $f^{-n}(\Omega )$
  ``satisfying'' changed to ``satisfies''
  
  Comment (96): Hausdorff limit specified}}
 
 We will construct $\Gamma _n\subset f^{-n}(G^0)\cup f^{-n}(\Omega )$. If $\Omega  =\emptyset $, then $\Gamma _n$ is a finite connected graph. If $\Omega \ne \emptyset $, then the quotient $\mbox{quot}(\Gamma _n)$ of  $\Gamma _n$ obtained by collapsing components of $f^{-n}(\Omega )$ to points is a finite connected graph. We shall see 
  that  the Hausdorff limit $\lim _{n\to \infty }\Gamma _n=G$ is a finite graph satisfying
  $$G \subset f^{-1}(G).$$
  
  We define $\Gamma _0$ to be a  union of trees  $\Gamma _0(P)\subset P$ and of $\Omega $. For a component $B$ of $\Omega $, we define $\Gamma _0(B)=B$. So $\Gamma _0\subset G^0\cup \Omega $.

\Comment{\color{red}{ $\Gamma $ has been replaced by $G$. The definition of $\mbox{quot}(\Gamma _n)$ has now been made. Comment (97) `` It will be possible to extend $\Gamma $ to a finite
  invariant  graph by adding in arcs on components of $\partial \Omega $, but this will not give an invariant graph, so we will need to work to extend $\Gamma $ into $\Omega $ to produce the required graph $G$.'' the first instance  of ``invariant'' should not be there. But in any case, this paragraph is deleted and replaced by the sentences above. 

``Bounded by $P\cup \Omega $'' changed to ``bounded by $P$'' below. `` $R(P)$ is disjoint from $P')$ for $P'\notin P$'' changed to ``$R(P)$ is disjoint from $\mbox{int}(P')$ for $P'\ne P$''

In the next two paragraphs, references to points in $P\cap \partial \Omega $ and $R(P)\cap \partial \Omega $ have been removed since, by \ref{1.17}, all such points have now been proved to be eventually periodic and are in the redefined set $Y$.}}

  For $P\in {\mathcal{R}}(G^0)$, let  $R(P)$ be the union of $P$ and any topological discs of diameter $<\varepsilon _1$ which are bounded by $P$. By property 2 of \ref{1.9}, $R(P)$ is disjoint from $\mbox{int}(P')$ for $P'\ne P$, and of course $R(P)$ is contractible. 
  Let $D(P)$ be a closed topological disc with  $P\subset R(P)\subset D(P)$ such that the (finitely many) points of $\partial R(P)\cap Y$   are in $\partial D(P)$, but otherwise  $\partial D(P)$ is disjoint from $R(P)$. Also, $D(P)\cap D(P')\subset \partial D(P)\cap R(P)$ for any $P'\ne P$ with $P$, $P'\in{\mathcal{R}}(G^0)$. 

Now we define  $\Gamma _0(P)=\Gamma _0\cap P$ for each $P\in {\mathcal{R}}(G^0)$. 
For each pair of adjacent points $x$ and $y$ in $P\cap \partial D(P)$,  and component $C$ of $\partial D(P)\setminus P$ bounded by $x$ and $y$, there is a unique arc $\gamma _C\subset P$ between $x$ and $y$ such that $\gamma _C\cup C$ bounds a component of $D(P)\setminus P$. If $x$ and $y$ are the only two points in $P\cap \partial D(P)$ then there are two possibilities for $C$. Otherwise, there is only one. 
We choose $\Gamma _0(P)$ to be contained in $\bigcup _C\gamma _C$, removing some subarcs that have the same endpoints as some other subarcs. This is done by successively removing some arcs from pairs $(\gamma _{C_1},\gamma _{C_2})$, where $\gamma _{C_1}\cup \gamma _{C_2}$ is not an arc. If this is the case, then there are subarcs $\gamma _{C_1,C_2}$ and $\gamma _{C_2,C_1}$ of $\gamma _{C_1}$ and $\gamma _{C_2}$ respectively  with the same endpoints, and such that 
$$\gamma _{C_1}\cap \gamma _{C_2}=\gamma _{C_1,C_2}\cap \gamma _{C_2,C_1}.$$ These properties uniquely determine $\gamma _{C_1,C_2}$ and $\gamma _{C_2,C_1}$.  If $\gamma _{C_1,C_3}$ is another such subarc of $\gamma _{C_1}$, then $\gamma _{C_1,C_2}$ and $\gamma _{C_1,C_3}$ have at most a common endpoint --- which is also a common endpoint of $\gamma _{C_2,C_1}$ and $\gamma _{C_3,C_1}$. We obtain $\Gamma _0(P)$ from $\bigcup _C\gamma _C$ by removing one of $\gamma _{C_1,C_2}\setminus \gamma _{C_2,C_1}$ or $\gamma _{C_2,C_1}\setminus \gamma _{C_1,C_2}$  for each such pair $(C_1,C_2)$. When this has been done for all pairs $(C_1,C_2)$, the remaining set $\Gamma _0(P)\subset P$ is a  tree 
with finitely many endpoints, at all the points of $\partial _{G^0}P$.  It is therefore uniquely determined up to Whitehead equivalence, using isotopy fixing the endpoints

% Choose $\Gamma _0(P)=\Gamma _0\cap P$  to be a  finite tree such that any endpoints of $\Gamma _0(P)$ are in $\partial _{G_0}P\subset \partial D(P)$ and $Y\cap P\subset \Gamma _0(P)$. This is possible, because $P$ is itself a finite union of components of $G^0\setminus Y$, and the points of $P\cap Y$ are locally separating, from the definitions. This can be done in many ways: for example by choosing arcs in $G^0\cap P$ between any two points in $P\cap Y$ bounding a component of $P\setminus Y$, and then successively removing intersections to obtain a tree. To give an idea of how to do this, suppose we have arcs $\gamma _1$ and $\gamma _2:[0,1]\to P$ in $P$ which intersect. Let $t_1\le t_2$ be such that $\gamma _2(t_i) =\gamma _1(s_i)$ and if $\gamma _2(t)=\gamma _1(s)$ for some $s$ then $t_1\le t\le t_2$. It does not matter whether $s_1\le s_2$ or $s_1>s_2$. Then define an arc $\gamma _3$ with the same endpoints as $\gamma _2$ by $\gamma _3=\gamma _2$ outside $[t_1,t_2]$ and $\gamma _3([t_1,t_2])=\gamma _1([s_1,s_2])$.

  \Comment{\color{red}{  A number of changes in the paragraph above. ``For each $P\in {\mathcal{R}}(G^0)$'' added in.  
   
  $\Gamma _0$ is now defined to contain $\Omega $. This means that $\Gamma _0$ is connected, and homotopic to a finite graph, but not itself a finite graph, if $\Omega \ne \emptyset $. Comment (98): $\Gamma _0'$  no longer appears as  the description of $\Gamma _0(P)$ has changed. Definition of $\Gamma _0(P)=\Gamma_0\cap P$ made at this point. Consequently, $\Gamma _0\cap P$ is changed to $\Gamma _0(P)$ wherever possible below. 
  }}

 The tree $\Gamma _0(P)$,  might not be uniquely determined up to homeomorphism, but it is uniquely determined up to Whitehead equivalence, because it is a tree in $D(P)$ with a finite number of vertices, and with extreme points at specified points of $\partial D(P)$. Here, two graphs are Whitehead equivalent if one is obtained from the other by  finitely many {\em{Whitehead moves}}, followed by isotopy. Alternatively, isotopy can be performed first, or both before and after the Whitehead moves. A {\em{Whitehead move}} moves  apart two vertices with a common edge between them, or moves two such vertices together. We allow Whitehead moves which move together a vertex and an extreme point on the boundary. Any tree in a topological disc with $n$ extreme points, all on the boundary of the topological disc, is Whitehead equivalent to a tree with a single vertex. So all these trees are Whitehead equivalent. It follows that $\mbox{quot}(\Gamma _0)$ is completely determined up to Whitehead equivalence by the collection of sets ${\mathcal{R}}(G^0)$ and $\Omega $.
 
\Comment{\color{red}{Three sentences added to explain our use of Whitehead equivalence (Comment (100)).  Following paragraph rewritten.  $\Gamma _1$ now includes some components of $f^{-1}(\Omega )$, and $\mbox{quot}(\Gamma _1)$ is Whitehead equivalent to  $\mbox{quot}(\Gamma _0)$.  Comment (99): I must have corrected this but unfortunately I accidentally corrected version 5 at some point and don't know where this comma was. }}

Next we choose  $\Gamma _1\subset f^{-1}(\Gamma _0)$.  $\Gamma _1$ will be  a union of sets $\Gamma _1(P)=\Gamma _1\cap P$ for $P\in{\mathcal{R}}(G^0)$, and sets $\Gamma _1(B)$ for components $B$ of $\Omega $. $\Gamma _1(P)$ will be    finite tree, or a finite union of trees and any sets of $f^{-1}(\Omega )\setminus\Omega $ which intersect $P$. Then   $\mbox{quot}(\Gamma _1(P))$ will be a tree which is Whitehead equivalent to $\Gamma _0(P)$, using isotopy which fixes  endpoints in $Y\cap P$.
% Note that
% $$\Omega \setminus f^{-1}(\Omega )\subset \Omega _0.$$

Let $P\in{\mathcal{R}}(G^0)$. Since $\Gamma _0(P)\subset G^0$, we can cover $\Gamma _0(P)\setminus f^{-1}(\Omega )$ by sets $P_i$, $B_j$ for $1\le i\le r$ and $1\le j\le s$ for some $r$, $s$, with $P_i\in {\mathcal{R}}_1(G^0)$, where $P_i$ is a component of $f^{-1}(P_i')$ for some $P_i'\in{\mathcal{R}}(G^0)$ and $B_j$ is a component of $f^{-1}(\Omega )\setminus \Omega $. We take a minimal such covering so that $\mbox{int}_{G^0}( P)\cap P_i\ne\emptyset $ and $\mbox{int}_{G^0}( P)\cap B_j\ne\emptyset $ for $1\le i\le r$ and $1\le j\le s$ .Then we can choose a  tree 
$$\Gamma _1(P)\subset P\cap f^{-1}(\Gamma _0)\cap \left( \bigcup _{i=1}^rP_i\cup  \bigcup _{j=1}^sB_j\right) $$
with the same endpoints as $\Gamma _0(P)$, and the trees $\Gamma _0(P)$ and $\mbox{quot}(\Gamma _1(P))$ are Whitehead equivalent.   Note that, even in the case that $P\cap f^{-1}(\Omega )= \emptyset $, it is possible, and probably inevitable, that $\Gamma _1(P)$ intersects $f^{-1}(G^0)\setminus G^0$, and hence $\Gamma _1(P)$ might not be contained in $P$, although its endpoints are in $P$. 
   
  \Comment{\color{red}{Adjustments made from version 5 in the paragraph above,  to take account intersections between $\mbox{int}(P)$ and $f^{-1}(\Omega )$. Comments (101), 102, (103) redundant, in view of rewriting. }}

%In the following paragraph, $\Omega $ has been replaced by $\Omega _0$, because only components of $\Omega _0$ are not contained in $f^{-1}(\Omega )$. }}
 
 Now we define $\Gamma _1(B)\subset \Gamma _1\subset f^{-1}(\Gamma _0)\cup f^{-1}(\Omega _0)$ for a component $B$ of $\Omega _0$. We choose  $\Gamma _1(B)$   to have endpoints in common with $\Gamma _1(P)$ for $P\in{\mathcal{R}}(G^0)$ with  $P\cap \partial B\ne \emptyset $. So all   endpoints of $\Gamma _1(B)$ are in $\partial B$, and all points of $Y\cap \partial B$ are in $\Gamma _1(B)$. Interior points of edges, or vertices, are allowed. There is still a lot of choice here. Care is needed, in order to ensure that we get only finitely many vertices, and no  free vertices in the limit.  We have finitely many points on $\partial B$ which are endpoints of $\Gamma _0\setminus B\subset Y$. Each one is in $f^{-i}(G')$ for some $0\le i<N$. If there is only one point in $f^{-i}(G')\cap B$, then $f^{-i}(G')\cap \mbox{int}(B)=\emptyset $. Otherwise, if $\#(f^{-i}(G')\cap \partial B)\ge 2$, there have to be entry and exit points for $f^{-i}(G')$ into $B$.    Let $B_i$ be the components of  $f^{-1}( \Omega )\cap B$ with $B_i\cap f^{-1}(\Gamma _0)\ne \emptyset $, for $1\le i\le r$. Now we choose $\Gamma _1(B)\subset f^{-1}(\Gamma _0)$ to join points of $\Gamma _0\cap \partial B$ to $\partial B_i$ for some of the $B_i$, not necessarily all of them.  We join up  the points by finitely many arcs in $ f^{-1}(\Gamma _0)$, where these arcs do not intersect transversally, and so that no closed loops are created, and if $\mbox{End}(B_i)$ denotes the number of endpoints on $B_i$, then  
$$\mbox{End}(B)-2\ge \sum _i\mbox{Max}(\mbox{End}(B_i)-2,0).$$
 Then $\Gamma _1(B)$ is the union of these arcs and the components $C$ of $f^{-1}(\Omega _0)\cap B$ containing endpoints of the arcs. Then $\mbox{quot}(\Gamma _1(B))$ is a tree. Because of the construction of $\Omega $ in \ref{1.19}, we have  $\Gamma _1(B)\subset B$.

We then define $\Gamma _1$ to be the union of all sets $\Gamma _1(P)$, for $P\in {\mathcal{R}}(G^0)$, and $\Gamma _1(B)=B$ for components $B$ of $\Omega \setminus \Omega _0$. For $Q\in {\mathcal{R}}_1(G^0)$, we define $\Gamma _1(Q)=\Gamma _1\cap Q$.  For components $C$ of $f^{-1}(\Omega )$, we define $\Gamma _1(C)=C$.

 \Comment{\color{red}{  
 Comment (104) concerning the paragraph below: I think I must have added the missing bracket in version 5 by mistake and forgot to record the change. 
 
 The definition of $\Gamma _{n+1}(P)$ has been rewritten, making the inductive definition clearer. It is necessary to consider separately the cases $m=0$ and $m\ge 1$. Rewriting makes Comment (105) redundant.}}

Now suppose that $n\ge 1$, and that  $\Gamma _k$ has been defined for $k\le n$, and $\Gamma _k(P)$ has been defined for some of the  $P\in{\mathcal{R}}_m(G^0)$,  for each $m\le k$  such that, if $m\ge 1$,
$$\Gamma _{k}(P)\subset f^{-1}(\Gamma _{k-1}(P_1))\subset f^{1-k}(\Gamma _1(P_2))\subset f^{-k}(\Gamma _0(P_3)),$$
where $P$ is a component of $f^{-1}(P_1)$ and $P_1$ is a component of $f^{1-k}(P_2)$ and $P_2$ is a component of $f^{-1}(P_3)$, and $\Gamma _k(P)$ has the same endpoints as $\Gamma _{k-1}(P)$, for $P\in {\mathcal{R}}_m(G^0)$, for each $0\le m\le k-1$. 
First let $m\ge 1$. Then we define $\Gamma _{n+1}(P)$ to be the unique subset of  $f^{-1}(\Gamma _{n}(P_1))$  such that $\Gamma _{n+1}(P)\cap f^{-n}(Y)=\Gamma _n(P)\cap f^{-n}(Y)$,  and $\mbox{quot}(\Gamma _{n+1}(P))$ is Whitehead equivalent to $\mbox{quot}(\Gamma _n(P))$, with isotopy fixing $f^{-n}(Y)$.  Now we consider the case $m=0$ and $n\ge 1$. The definitions are exactly as before, except with $\Gamma _n(P_1)$ replaced by $\Gamma _n$.  Similarly, we define $\Gamma _{n+1}(B)$ for a component $B$ of $\Omega _m$, assuming inductively that $\Gamma _k(B)$ has been defined for $k\le n$ and any component $B$ of $\Omega _m$. Then we define $\Gamma _{n+1}$ to be the union of all $\Gamma _{n+1}(P)$ and $\Gamma _{n+1}(B)$, for $P\in{\mathcal{R}}_{n}(G^0)$ and components $B$ of $f^{-n}(\Omega )$ for which $\Gamma _{n+1}(P)$ and $\Gamma _{n+1}(B)$ are defined. Equivalently, we can use $P\in{\mathcal{R}}_{m}(G^0)$ and components $B$ of $f^{-m}(\Omega )$, for any $0\le m\le n$. Then we define $\Gamma _{n+1}(P)=\Gamma _{n+1}\cap P$ for any $P\in{\mathcal{R}}_{n+1}(G^0)$ for which $P\cap \Gamma _{n+1}\ne \emptyset $, and we define $\Gamma _{n+1}(B)=B$ for any component $B$ of $f^{-n-1}(\Omega )$ with $\Gamma _{n+1}\cap B\ne \emptyset $ --- in which case, of course, $B\subset \Gamma _{n+1}$. 

\Comment{\color{red}{
 In the paragraph below.  Comment (106). ``independent'' corrected. 
 
 The sentence ``Similarly, if $B$ is a component of $f^{1-m}(\Omega )\setminus f^{-m}(\Omega )$  then $\Gamma _n\cap B$ are all Whitehead equivalent for $n\ge m$.'' has been removed because intersections  of $\Gamma _n$ with 
 $f^{1-m}(\Omega )\setminus f^{-m}(\Omega )$ have now been incorporated into sets $\Gamma _n(P)$ for neighbouring sets $P\in {\mathcal{R}}_{m-1}(G^0)$.

 ``components $P$ of $f^{-n}({\mathcal{R}}(G^0))$'' replaced by ``sets $P\in{\mathcal{R}}_n(G^0)$'' -- so Comment (107) on missing bracket is redundant.
 
 ``$P$ is any component of $f^{-m}({\mathcal{R}}(G^0))$''  replaced by ``$P\in {\mathcal{R}}_m(G^0)$''
 
 Replaced $\Gamma _n\cap P$ by $\Gamma _n(P)$. Replaced $f^{-m}({\mathcal{R}}(G^0))$ by ${\mathcal{R}}_m(G^0)$
 
 Comment (108): ``for'' deleted
 
 Comment (109): corrected to $0<\lambda <1$
 
 Comment (110) is redundant since once again $f^{-m}({\mathcal{R}}(G^0))$ has been replaced by ${\mathcal{R}}_m(G^0)$
 
 `` or $P$ is a component of $f^{1-m}(\Omega )\setminus f^{-m}(\Omega )$'' removed again as intersections of $\Gamma _n$ with such sets have now been incorporated into $\Gamma _n(P)$ for $P\in {\mathcal{R}}_{m-1}(G^0)$ 
 
 Comment (111): comma added in.}}

For each $n$,  $\mbox{quot}(\Gamma _n)$ is   a finite connected graph with the same number of vertices: for $\Gamma _n$ itself, some vertices are replaced by components of $f^{-n}(\Omega _0)$.  For each $P\in {\mathcal{R}}_m(G^0)$, the trees $\mbox{quot}(\Gamma _n(P))$ are  finite trees in the same Whitehead equivalence class for all $n\ge m$, with isotopy fixing endpoints. Similarly, if $B$ is a component of $f^{-m}(\Omega )$, then  $\mbox{quot}(\Gamma _n(B))$ are  finite trees in the same Whitehead equivalence class for all $n>m$, with isotopy fixing endpoints. (If $m=n$ then $\Gamma _n(B)=B$.)
 So there is a bound, independent of $n$, on the number of  sets $P\in{\mathcal{R}}_n(G^0)$
 for which $\Gamma _n(P)$ is defined and  not homotopic to  an arc. Using the contraction of $f^{-n}$ in a neighbourhood of $G^0$, we see that  there are constants $C_0>0$ and $0<\lambda <1$ such that, if $d_H$ denotes Hausdorff distance, with respect to the spherical metric, and $P\in {\mathcal{R}}_m(G^0)$, then, recalling that sets in ${\mathcal{R}}(G^0)$ have diameter $<\varepsilon _1$,
\begin{equation}\label{1.10.1}d_H(\Gamma _n(P),\Gamma _{n+1}(P))\le C_0\varepsilon _1\lambda ^{n-m}\end{equation}
for all $n\ge m$. Also, if $P\in {\mathcal{R}}_n(G^0))$,

\Comment{\color{red}{last phrase added in, $\Gamma _n\cap P$ replaced by $\Gamma _n(P)$ again.

Below: ``wherever'' replaced by ``whenever'' ``components $P$ of'' replaced by ``sets $P\in $''

 }}

\begin{equation}\mbox{diam}(\Gamma _n(P))\le C_0\varepsilon _1\lambda ^n.\end{equation}
Similar inequalities hold if $P\in{\mathcal{R}}_m(G^0)$ is replaced by a component $B$ of $f^{-m}(\Omega )$ for $m\le n$, and $\varepsilon _1$ is replaced by $\varepsilon _0$. Now we claim that $\bigcup _{P\subset U}\Gamma _n(P)$ converges to an arc in $\overline{\mathbb C}$ -- which will be an arc in our graph $G$ --- whenever $U$ is a  union of sets $P\in {\mathcal{R}}_m(G^0)$ and components of $f^{-m}(\Omega )$, and the boundary of $U$ consists of two points of $f^{-m}(Y)$
   --- such that $\mbox{quot}(\Gamma _n(P))$ is  an arc for one, and hence all, $n\ge m$. It suffices to prove that for $P_1$ and $P_2\in{\mathcal{R}}_m(G^0)$ with $P_1$, $P_2\subset U$ and $\partial _{G_0}(P_1)\cap \partial _{G_0}(P_2)=\emptyset $ then $\lim _{n\to \infty }\Gamma _n(P_1)$ and $\lim _{n\to \infty }\Gamma _n(P_2)$
do not intersect. We only need to prove this when $d(x,y)\le 2C_0\varepsilon _1/(1-\lambda )$ for some   $x\in P_1$ and $y\in P_2$.  It suffices to prove it for $m=0$, since we then get the result for a general $m$ by applying local inverses of $f^m$. For if $P_1$ and $P_2$ are intersections with $G^0$ of components of $f^{-\ell  }(Q)$ for some $Q\in{\mathcal{R}}_{m-\ell }(G^0)$ and minimal $\ell $, then the components of $f^{-1}(Q)$ are bounded apart.  If $\Gamma _n(P_i)\subset P_i$ for all $n$ and for $i=1$ and $2$, then the proof is finished. But in general $\Gamma _n(P_i)$ is contained in a possibly larger set, which we need to analyse.
 
 \Comment{\color{red}{ Right bracket ended in after $f^{-m}(\Omega )$ above -- this might be comment (110) but I don't think so.  Comment (112) Obvious typo corrected.
 
 Below, the definitions of ${\mathcal{R}}(f^{-1}(G^0))$ and  of ${\mathcal{R}}(f^{-n-1}(G^0))$ have been rewritten, using $A_1(P)$ from the start. Comments (113) (114) now redundant. Sentence starting ``Note that'' has been added in. The definition of ${\mathcal{R}}(G^0)$ has been slightly changed to take account of the fact that   two disjoint open  arcs of $G^1$ might have a common endpoint in $\partial \Omega $. 
 
 Comment (115): brackets added in on 3rd, 4th, 5th, 17th lines of what was page 21. $f^{k-m}$ changed to $f^{-n}$ on what was the 5th line of page 21.}}

For $P\in {\mathcal{R}}(G^0)$, define $A_1(P)$ to be the union of  $P$ and of arcs of $f^{-N}(G')$ with endpoints in $P$. 
 Then we define  
$${\mathcal{R}}(f^{-1}(G^0))=\{ A_1(P):P\in {\mathcal{R}}(G^0)\} .$$

 Then each $A_1\in {\mathcal{R}}(f^{-1}(G^0))$ intersects $\mbox{int}_{G^0}(P)$  for a unique  $P\in {\mathcal{R}}(G^0)$. Then
  $$A_1(P)\setminus P\subset (f^{-1}(G^0)\setminus G^0)=(f^{-N}(G')\setminus G').$$
 So $A_1(P)=P$ if $A_1(P)\cap f^{-N}(G')\setminus G'=\emptyset $. 
 Similarly for $P\in {\mathcal{R}}(f^{-n}(G^0))$,  we define $A_1(P)$  to be the union  of  $P$ and of arcs in $f^{-n+1-N}(G')$ with endpoints in $P$.
   Then 
  $${\mathcal{R}}(f^{-n-1}(G^0))=\{ A_1(P):P\in {\mathcal{R}}(f^{-n}(G^0))\} .$$
  We also define $A_n(P)$ inductively by $A_{n+1}(P)=A_1(A_n(P))$.

\Comment{\color{red}{Property 5 replaced by Property 6 below. Comment (116): Hausdorff limits. To reflect earlier changed definition of $\Gamma _n(P)$, $A_n(P)$ below is replaced by $A_n(P)\cup f\Omega _n(P)$.}}

  Property 6 of \ref{1.9} still holds for ${\mathcal{R}}(f^{-n}(G^0))$, for all $n\ge 0$. If $P_1$ and $P_2\in {\mathcal{R}}(G^0)$ with $P_1\ne P_2$ then $A_n(P_1)$ and $A_n(P_2)$ intersect only in  common  boundary points of $P_1$ and $P_2$. Write $\Omega _n(P)$ for the union of components of $f^{-n}(\Omega )$ which intersect $A_n(P)$. 
 Then $\Gamma _n(P)\subset A_n(P)\cup \Omega _n(P)$ for all $P\in{\mathcal{R}}(G^0)$. Diameters of components of $f^{-n}(\Omega )$ tend to $0$ uniformly with $n$, and $\Gamma _n(P)\cup \Omega _n(P)$ is connected. So it suffices to show that the Hausdorff limits $\lim _{n\to \infty }A_n(P_1)$ and $\lim _{n\to \infty }A_n(P_2)$ are disjoint  if $P_1$ and $P_2$ are disjoint. 
 
Write $\delta =\mbox{diam}(P_1)$. We claim that, given $C_1>0$ there is a constant $C_4>0$ such that if
\begin{equation}\label{1.10.3}\mbox{Min}\{ d(x_1,x_2):x_1\in P_1,\ x_2\in P_2\} \le C_1\delta ,\end{equation}
 then 
\begin{equation}\label{1.10.2}C_4^{-1}\delta \le \mbox{diam}(P_2)\le C_4\delta .\end{equation}
We see this as follows. Note that $P_1$ and $P_2$ can be interchanged, so we can assume that $\mbox{diam}(P_2)\le \delta $ and $C_1\ge 2$, and $\mbox{diam}(P_1\cup P_2)\le 2C_1\delta $.  Let $Y(P)=P\cap f^{-N_0}(Y_0)$. The argument is similar to that in \ref{1.9}. We recall from \ref{1.9} that there is a constant $C_3$ such that $\delta \le C_3\mbox{diam}(Q)$ for a component $Q$ of $f^{-r}(G')\setminus f^{-N_0}(Y_0)$ with $Q\subset P_1$ and $0\le r<N$. Choose the least $p_2\ge 0$ such that $\mbox{diam}(f^{N_0-p_2}(Q))\le \delta _0/(2C_1C_3)$, so that if $p_2>0$, then $\mbox{diam}(f^{N_0-p_2}(Q))\ge \delta _0/(2K_1C_1C_3)$, and $p_2$ is bounded in terms of $C_1$, $K_1$ and $C_3$.  Then $\mbox{diam}(f^{N_0-p_2}(P_1\cup P_2))\le \delta _0$. So 
$$\mbox{diam}(f^{N_0-p_2}(P_2))\ge \mbox{diam}(f^{N_0-p_2}(P_2)\cap f^{-p_2}(Y_0))\ge C_4'\mbox{diam}(f^{N_0-p_2}(Q)),$$
where $C_4'$ is a constant bounded in terms of the minimum distance between points of $f^{-p_2}(Y_0)$. Then pulling back to $P_1\cup P_2$ under the local inverse of $f^{N_0-p_2}$, we obtain (\ref{1.10.2}) with $C_4=C_4'K_1$.

Similarly for any $P\in{\mathcal{R}}(G^0)$, there is a constant $C_6$ such that 
$$\mbox{diam}(A_1(P))\le K_1\mbox{diam}(P),$$
 because if $A_1(P)$ is any larger than $P$ then $A_1(P)$ is obtained from $P$ by adding in arcs of $f^{-N}(G')$ with endpoints in $P$. Arcs in $f^{-N}(G')$ are $K_1$-quasi-arcs, by \ref{1.13}. The diameter of any component of $f^{-1}(\Omega )$ in $P$ is bounded by the a constant times the distance between the boundary points in $f^{-1}(Y)$ and these points are themselves in arcs between points of $Y\cap P$, or in $Y\cap P$. 

\Comment{\color{red}{ This paragraph has been substantially rewritten.

``are'' -deleted before ``each contain''.

``contain $Y_0$'' replaced by ``intersect' $Y_0$'' . ``$f^{k_1-p_0}(Y_1)$'' replaced by``$f^{N_0-p_1}(P_2)$''

$k_1$ replaced by $N_0$ several times

Comment (118): punctuation added. Comment (117)  $d$ replaced by $\delta $.''}}
 
 So we now assume that (\ref{1.10.3}) and (\ref{1.10.2}) hold,  with $C_1=2C_0/(1-\lambda )$,and $P_1$ and $P_2$ are disjoint.
   Then there is a universal constant $C_5>0$, depending only on $G'$ and $C_4$ such that  \begin{equation}\label{1.14.1}\mbox{Min}\{ d(x_1,x_2):x_1\in P_1,\ \ x_2\in P_2\} \ge C_5^{-1} \delta .\end{equation}
 \Comment{\color{red}{Comment (119)The proof of this is expanded and now refers to\ref{1.18}, which is new.}}
  
For suppose that $d(x_1,x_2)<C_5^{-1}\delta $  and $x_k\in f^{-i_k}(G')$ for some $0\le i_k<N$. By \ref{1.18} there is $x_3\in f^{-i_1}(G')\cap f^{-i_2}(G')$ with $d(x_k,x_3)\le K_1C_5^{-1}\delta $.   
 Then either $x_3\in P_k$ or $x_3$ is separated in $f^{-i_k}(G')$ from $x_k$ by a point $y_k\in Y(P_k)$. In such a case we have $d(x_k,y_k)\le K_1^2C_3^{-1}\delta $. This is only possible for one of the $P_k$ because otherwise we would have  $d(y_1,y_2)<2K_1^2C_3^{-1}\delta $, which is impossible with $C_3$ large enough, because  it would mean $y_1=y_2$, contrary to our assumption. But it cannot be possible for just one $P_k$ either because then $P_1$ and $P_2$ have a common boundary point. 
This completes the proof of (\ref{1.14.1})

 Also, for each $n$ and $P$, any set $P'$ of ${\mathcal{R}}_n(G^0)$ which intersects $P$ has boundary in  $f^{-n}(Y)$ and with interior disjoint from this. So we have
 $$\mbox{diam}(P')\le C_0\lambda ^n\mbox{diam}(P).$$
  
\Comment{\color{red}{The following argument is rewritten, which certainly needed doing -- although, ironically, ``$f^{-j-\ell N-kN}(G')$'' was just a typo -- should have been ``$f^{-j-\ell N-kN}(G')\setminus f^{-j-kN}(G')$'', as it now is (Comment (120)). Comments (121) to (123) are now redundant because of rewriting.}}

 We now assume that $\varepsilon _1$ is  sufficiently small, that the following holds, given an integer $i_1$ and the constant $C_4$, which we assume is $\ge C_1$.  The integer $i_1$ will be specified later, but is independent of $N_0$, and hence of  $\varepsilon _1$.  If $0\le i,j<N$ with $i\ne j$ and  $ x\in f^{-i}(G')\cap f^{-j-kN}(G')$  for some  $0\le k\le (1+N)i_1$, then $x$ is distance $\ge 6C_4\varepsilon _1$ from $f^{-j-\ell N-kN}(G')\setminus f^{-j-kN}(G')$ for all but at most one  $\ell $ with   $k<\ell\le  k+(1+2N)i_1$, and if there is such an $\ell $ then, using \ref{1.17}, $x$ must be distance $\le 6K_1C_4\varepsilon _1$ from a vertex in the boundary between $f^{j-kN}(G')$ and $f^{-j-\ell N-kN}(G')\setminus f^{-j-kN}(G')$. Here, as assume, as we may do, using bounded distortion of local inverses of $f^n$, that  \ref{1.17} works with $f^{-i}(G')$ replaced by $f^{j-kN}(G')$  and $K_1\delta _0$ replaced by half the minimum distance between vertices of $f^{j-kN}(G')$.  So we also  assume that $\varepsilon _1$ is sufficiently small given $i_1$ that any two vertices for $f^{-j-mN}(G')$ are distance $12 K_1C_4\varepsilon _1$ apart or coincide, for $0\le j<N$ and $m\le (1+2N)i_1$, and any vertex of $f^{-k}(G')$ is either distance $\ge 12 K_1C_4\varepsilon _1$ from $f^{-\ell}(G')$ or contained in $f^{-\ell }(G')$ for $0\le k,\ell \le (2+2N)i_1$. 

 These conditions ensure that  there are at most $2N$  values of $n$ (depending on $P_1$) such that $n\le (2N+2)i_1$ and $A_{n+1}(P_j)\ne A_n(P_j)$ for at least one of $j=1$, $2$
 
   So now, there is $i_0$ (depending on $P_1$ and $P_2$) with $i_0\le (2N+1)i_1$  such that $A_n(P_j)=A_{i_0}(P_j)$ for $i_0\le n\le i_0+i_1$ and $j=1$, $2$. Then for any $x_j\in A_n(P_j)$ for $n\le i_0+i_1$ and $j=1$, $2$,
   $$d(x_1,x_2)\ge C_5^{-2N}\delta .$$ 
   This is the same argument as for $x_j\in P_j$ for $j=1$, $2$, together with induction replacing $P_j$ by $A_i(P_j)$ for each $i$ with $A_{i+1}(P_j)\ne A_i(P_j)$ for at least one $j$.  But for any $n$, for $P=P_1$ or $P_2$,
   $$d_H(A_n(P),A_{n+1}(P))\le C_0C_4\lambda ^n\delta $$
   So for any $x_j\in A_n(P_j)$ for $n\ge i_0+i_1$ and $j=1$, $2$,
    $$d(x_1,x_2)\ge C_5^{-2N}\delta -C_0C_4\delta \lambda ^{i_1+i_0}/(1-\lambda).$$
  So if $i_1$ is large enough given $C_0$, $\lambda $, $C_4$ and $C_5$, the Hausdorff limits $\lim _{n\to \infty }A_{n}(P_1)$ and $\lim _{n\to \infty }A_{n}(P_2)$ are disjoint, as required, and our required graph is the Hausdorff limit
   $$G=\lim _{n\to \infty }\Gamma _n.$$

For Conclusion 1 of \ref{1.1}, by (\ref{1.10.1}), $G$ is in an $O(\varepsilon _1)$ neighbourhood of $G^0$, where $\varepsilon_1$ can be taken as small as desired. For conclusion 2 of \ref{1.1}, $G$ has paths within $O(\varepsilon _1)$ of every path through ${\mathcal{R}}(G^0)$, and also passes through all components of $\Omega $, which, by  \ref{1.8}, have diameter $<\varepsilon _0$. For $0<i<N$, any arcs of $f^{-i}(G')\setminus G^1$ have diameter $\le K_1\varepsilon _0$, for $K_1$ as in \ref{1.13}, since the endpoints  of such an arc are in the same component of $\Omega $. So since $G'\subset G^0$, given $\varepsilon '$, we can choose $\varepsilon _0$ and $\varepsilon _1$ of \ref{1.8} and \ref{1.9} sufficiently small that $G$ has closed loops  within $\varepsilon '$ of  any closed loop of  $G^0$, and hence also of $G'$, which bounds a disc of diameter  $\ge \alpha _0$.

\Comment{\color{red}{Conclusion 1 of Theorem 1.1. has now been removed. So Conclusions 2 and 3 are now renumbered as Conclusions 1 and 2.  For the proof of Conclusion 2, details have been added in on the diameter of components of $f^{-i}(G')\setminus G^0$.}}
\ep

 \section{Boundary of existence of Markov partition}\label{2}
 
The main motivation for constructing Markov partitions as in Section \ref{1} is that Markov partitions with such properties exist on an open subset of a suitable parameter space. One can then use such partitions to analyse dynamical planes of maps in a subset of parameter space, and this subset of parameter space  itself, and try to follow at least part of the programme introduced by Yoccoz for quadratic polynomials, and generalised by others, including Roesch \cite{Roesch} to other families of rational maps.

We have the following theorem. 
 
\begin{theorem}\label{2.1} Let $f$ be a rational map with critical value set $Y$. Let $G\subset \overline{\mathbb C}$ be a connected finite graph,  and $r>0$ an integer such that the following hold
\begin{enumerate}
\item  $G\subset f^{-1}(G)$.
\item For each edge $e$ of $G$, $f^n(e)$ is more than a single edge of $G$, for all sufficiently large $n$.
\item  $G$ separates the points of $Y$.
\item  $Y$ is separated from $G$ by $f^{-r}(G)\setminus G$, that is, any path from a point of $Y$ to $G$ must cross $f^{-r}(G)\setminus G$.
\end{enumerate}

Then $f^N$ is expanding in some neighbourhood of $G$ with respect to the spherical metric, for all suffiiciently large $N$. Moreover, for all rational maps $g$ sufficiently close to $f$ in the uniform topology, the properties above hold with $g$ replacing $f$ and a graph $G(g)$ isotopic to the graph $G=G(f)$ above, and varying continuously with $g$.

 In particular, these properties hold for nearby $g$, if $f$ is a rational map such that the forward orbit of every critical point is attracted to an attractive or parabolic periodic orbit, the closures of any two periodic Fatou components are disjoint, and $G$ is a graph with the properties above, and  which is also disjoint from the closure of any periodic Fatou component.
\end{theorem}

\noindent{\em{Proof.}} Define
$${\mathcal{P}}_0=\{\overline{W}:W\mbox{ is a component of }\overline{\mathbb C}\setminus G\} .$$
Then define
$${\mathcal{P}}_n=\{ P':P'\mbox{ is a component of }f^{-n}(P)\mbox{ for some }P\in {\mathcal{P}}\} .$$

First we show that $f^N$ is expanding on a suitable neighbourhood of $G$, for $N$ sufficiently large. if $x\in G$, and $W$ is the union of sets of ${\mathcal{P}}_r$ containing $x$, and $S_n$ is sequence of  local inverses of   $f^n$ with $f\circ S_{n+1}=S_n$ and  $S_n(x)\in G$, then $\mbox{diameter}(S_n(W))\to 0$ as $n\to \infty $, uniformly in $x$. For suppose not. Then $\mbox{diameter}(S_{m_n}(W))\to 0\ge \varepsilon $ for some $\varepsilon >0$, some $x$ and subsequence $m_n$, and then $\limsup _{n\to \infty }S_{m_n}(\mbox{int}(W))$ is contained in the Fatou set of $f$ and has non-empty interior, which intersects $G$. It follows that $G$ intersects a periodic component of the Fatou set. This component must be an attractive or parabolic component, because a Siegel disc or Herman ring cannot intersect a forward invariant graph $G$, unless the graph is a finite union of circles in the Fatou set, which does not satisfy the stated conditions of $G$. So this Fatou component can be assumed to contain a critical value of $f$, that is, a point $y_0$  of $Y(f)$, whose forward orbit is attracted to the periodic orbit of an attractive or parabolic periodic point $x_0$. We can assume that $f^{tn}(y_0)\to x_0$ as $n\to \infty $, where $t$ is the period of $x_0$. Let $W_0$ be the union of sets of ${\mathcal{P}}_r$ containing  $x_0$. Then $W_0$ is a closed neighbourhood of $x_0$. Let $T$ be the local inverse of $f^t$ with $T(x_0)=x_0$. We have $T^n(\partial W_0)\subset W_0$ for all $n$, and $f^{tn}(T^n(\partial W_0))=\partial W_0$. Let $W_1$ be the attracting  petal of $x_0$ with $y_0\in \partial W_1$ and  $f^{tn}(z)\to x_0$ as $n\to \infty $, uniformly for $z\in W_1$. Then $T^n(\partial W_0)\cap W_1=\emptyset $ for sufficiently large $n$. This is a contradiction, because $T^n(\partial W_0)$ separates $y_0$ from $x$. So $\mbox{diameter}(S_n(W))\to 0$ as $n\to \infty $, uniformly in $x$. Let $U'=U_0'$ be the union of all sets of ${\mathcal{P}}_r$ which intersect $G$. and $U_n'$ the union of all sets of ${\mathcal{P}}_{n+r}$ which intersect $G$. Then $f^{n-t}(U_n')=U_t'$ for each $0\le t\le n$, and for all sufficiently large  $N$, we have $U_{N}'\subset \mbox{int}(U_0')$ and $f^N$ is expanding on $U_{N}'$ with respect to the spherical metric $d_0$. Then  $f$ is expanding on $U_{N}'$ with respect to the metric $d_1$, where
$$d_1(z,w)=\sum _{i=0}^{N-1}d_0(f^i(z),f^i(w)).$$

We are going to construct closed neighbourhoods $U$, $U_1$ of $G$ with $U_1\subset \mbox{int}(U)$ and $f(U_1)=U$ and such that the inclusion of $G$ in each of $U_1$ and $U$ is a homotopy equivalence. 
  Our set $U$ will  be a perturbation of $U_{0}'$, which can be taken arbitrarily close to $U_0'$. We have $U_{n+1}'\subset U_n'$  and  $U_{n+N}'\subset \mbox{int}(U_n')$ for all $n$. So  we can write $\partial U_0'$ as a union of  sets $\partial _i$ which are open in $\partial U_0'$, for $1\le i\le N$, such that $U_{i}'\cap \partial _i'=\emptyset $. To obtain the set $U$, we make successive perturbations of $U_0'$ near $\partial _i'$ to $U_{0,i}'$ for $1\le i<N$, so that 
$$U_{0,1}'\subset U_0',$$
$$U_{0,i}'\subset f(U_{0,i}',)\ 1\le i<N,$$
$$U_{0,i+1}'\subset U_{0,i}',\ 1\le i<N$$
 and 
$$f^{-1}(U_{0,i}')\cap \cup _{j=1}^{i+1}\partial _{j,i}=\emptyset ,$$
where $\partial _j$ is perturbed to $\partial _{j,i}$ in $U_{0,i}'$. Then $U_{0,N-1}'=U$, and $U_n$ the corresponding perturbation of $U_{n}'$ for each $n\ge 0$. Then $U$ and $U_1$ have the required properties. 

For $g$ sufficiently close to $f$, we can perturb $U_1$ to $U_1(g)$, which varies isotopically for $g$ near $f$, with $U_1(f)=U_1$, $U_1(g)\subset U$. We have a homeomorphism $k_1:U\to U_1$ which is the identity on $G$, and a decreasing sequence of closed neighbourhoods $U_n$ of $G$, which are the images of  homeomorphisms $k_n:U_n\to U_{n+1}$ satisfying $f\circ k_n=k_{n-1}\circ f$ and $k_n=\mbox{identity}$ on $f^{-n}(G)$. Correspondingly, we have   a homeomorphism $k_g:U\to U_1(g)$ which is the identity on $G$. By successive lifts of this homeomorphism, we obtain  sets  $U_n(g)$ with $U_{n+1}(g)\subset \mbox{int}(U_n(g))$, $g(U_{n+1}(g))=U_n(g)$ and homeomorphisms  $k_{n,g}:U_n(g)\to U_{n+1}(g)$ satisfying $g\circ k_{n+1,g}=k_{n,g}\circ g$. We also have a homeomorphism $h_g:\overline{\mathbb C}\to \overline {\mathbb C}$ which is the identity outside $U_N(g)$ and mapping $G$ to $G_1(g)\subset g^{-1}(G)$ where $G_1(g)$ is an arbitrarily small perturbation of $G_1(f)=G$, by taking $g$ arbitrarily close to $f$. Then $h_g$ can be taken arbitrarily close to the identity in the $C^1$ topology, by taking $g$ arbitrarily close to $f$. 

Then both $g$ and $g\circ h_g$ are expanding on $U_N(g)$ with respect to the metric $d_1$, for $g$ sufficiently close to $f$. We can then follow the method of proof of Theorem \ref{1.1} for $f^N$ to obtain a graph $G(g)$, which is homeomorphic  to  $G_0$ under a homeomorphism of $\overline{\mathbb C}$ which is arbitrarily close to the identity for $g$ arbitrarily close to $f$, with $G(g)\subset g^{-1}(G(g))$. \ep

So we see that there are natural conditions under which an isotopically varying graph $G(g)$ exists, with $G(g)\subset g^{-1}(G(g))$, for an open connected  set of $g$ which are not all hyperbolic. In fact these open connected sets will intersect infinitely many hyperbolic components. We also have an isotopically varying Markov partition ${\cal{P}}(g)$ given by 
$${\mathcal{P}}(g)=\{ \overline{W}:W\mbox{ is a component of }\overline{\mathbb C}\setminus G(g)\} .$$
We now proceed to investigate the boundary of the set of $g$ in which  $G(g)$ and  ${\mathcal{P}}(g)$ exist.
We define
$${\mathcal{P}}_n(g)=\{ P':P'\mbox{ is a component of }g^{-n}(P)\mbox{ for some }P\in {\mathcal{P}}(g).\} $$
We thus have ${\mathcal{P}}_0(g)={\mathcal{P}}(g)$. 

\begin{theorem}\label{2.2}  Let $V$ be a connected component of an affine  variety over $\mathbb C$ of rational maps $V$ in which the set $Y(f)$ of critical values varies isotopically. Let $V_1$ be a maximal connected subset of $V$ such that, for $g\in V_1$, there exist a finite connected graph $G(g)$,  and an  integer
 $r(g)>0$ with the following properties.
\begin{itemize} 
\item $G(g)$ varies isotopically with $g$ for $g\in V_1$.
\item $G(g)\subset g^{-1}(G(g))$.
\item For each edge $e$ of $G(g)$, $g^n(e)$ is more than one edge of $G(g)$, for all sufficiently large $n$. 
\item $G(g)$ separates points of $Y(g)$.
\item If $P\in {\mathcal{P}}_{r(g)}(g)$ and $P\cap G(g)\ne \emptyset $ then $Y(g)\cap P=\emptyset $.\end{itemize}
Then if $V_2\subset V_1$ is a set such that $\overline{V_2}\setminus V_1\ne \emptyset $, where the closure denotes closure in $V$, the integer $r(g)$ is unbounded for $g\in V_2$.  \end{theorem}

\begin{definition} We shall say that  $Y(g)$ is {\em{combinatorially bounded from $G(g)$ for $g\in V_2$}} if $r(g)$ as above is bounded for $g\in V_2$, that is, for some $r$, $Y(g)$ is separated from $G(g)$ by $g^{-r}(G(g))\setminus G(g)$ for all $g\in V_2$. 
\end{definition}

\begin{remarks}
\begin{enumerate}
\item Because the critical value set $Y(g)$ varies isotopically for $g\in V_1$, the set of critical points also varies isotopically.
\item From the definition of $V_1$, and from Theorem \ref{2.1}, $V_1$ is open.

\end{enumerate}
\end{remarks}

\subsection{Real-analytic coordinates on $G(g)$}
A key idea in the proof of \ref{2.2} is to use  real-analytic coordinates on the graph $G(g)$ for $g\in V_1$, provided by the normalisations of the sets in the complement of the graph. Let $P_i(g)\in {\mathcal{P}}(g)$.

  We have uniformising maps $\varphi _{i,g}:P_i(g)\to \{ z:|z|\le 1\} $ for each $1\le i\le k$, which are holomorphic between interiors, and unique up to post-composition with M\"obius transformations. Then we have a collection of maps $\varphi _{j,g}\circ g\circ \varphi _{i,g}^{-1}$, defined on subsets of the closed unit disc, and mapping onto the closed unit disc. Each of these maps is holomorphic on the intersection of its domain with the open unit disc , and extends by the Schwarz reflection principle to a holomorphic map on the reflection $z\mapsto \overline{z}^{-1}$ of this domain in the unit circle. In particular, each such map is real analytic on the intersection of its domain with the unit circle. 

Now $g:g^{-1}(P_i(g))\to P_i(g)$ is a branched covering, and, since $G(g)$ separates the critical values of $g$, each component of $g^{-1}(P_i(g))$ is conformally a disc, and the closure of each component is a closed topological disc. Let $I(i)$ denote the (finite) set of components of $g^{-1}(P_i(g))$.  Let 
$$\psi _{i,g}:g^{-1}(P_i(g))\to \{ z:|z|\le 1\} \times I(i)$$
 be a uniformising map, once again, holomorphic on the interior and unique up to post-composition with a M\"obius transformation on each component.  Then $\varphi _{i,g}\circ g\circ \psi _{i,g}^{-1}$ is a disc-preserving Blaschke product on each of  a finite union of discs, mapping each one to the same disc  whose degree is the degree of $g\vert P_i(g)$. 
Each map $\varphi _{i,g}\circ g\circ \varphi _{j,g}^{-1}$, where defined, is of the form $(\varphi _{i,g}\circ g\circ \psi _{i,g}^{-1})\circ \psi _{i,g}\circ \varphi _{j,g}^{-1}$. Now we establish an expansion property of these maps. 

 \begin{definition}If $D$ denotes the closed unit disc and $A\subset \partial D$ is a finite set, then we say that the moduli of $(D,A)$ are bounded if $A$ contains less than four points, or if the cross-ratio of any subset of of $A$ consisting of four points is bounded above and below. If $Q$ is a closed topological disc and $B\subset \partial Q$ is finite, then we say that the moduli of $(Q,B)$ are bounded if the moduli of $(\varphi (Q),\varphi (B))$ are bounded, where $\varphi :Q\to D$ is a homeomorphism which is holomorphic on the interior of $Q$.\end{definition}

\begin{lemma}\label{2.11}Let $X(g)$ denote the vertex set of $G(g)$. Suppose that  $N$ is such that for any $i$ and $j$ and component $Q$ of $g^{-N}(P_j(g))$ with $Q\subset P_i(g)$, at least one component of $\partial P_i(g)\setminus \partial Q$ contains at least two vertices of $G(g)$, and the moduli of 
$$\left( \bigcup _{i\in I}P_i(g),g^{-N}(X(g))\cap \partial (\bigcup _{i\in I}P_i(g))\right)$$
  are bounded  for any finite set $I$ such that $\bigcup _{i\in I}P_i(g)$ is a topological disc. Then the maps $\varphi _{i,g}\circ g^{N\ell }\circ \varphi _{j,g}^{-1}$ on arcs of the unit circle, where defined, are expanding   with respect to the Euclidean metric on the unit circle, with expansion constants bounded from $1$, for any  $\ell \ge 1$ which is sufficiently large given the moduli bounds.  \end{lemma}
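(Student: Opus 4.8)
The plan is to realise the maps $\varphi_{i,g}\circ g^{N\ell}\circ\varphi_{j,g}^{-1}$ as inverses of admissible compositions of inverse branches of a finite graph‑directed family of real‑analytic circle maps, and to read off the expansion bound from the two standard features of such a family on a conformal expanding repeller (cf.\ \cite{P-U}): uniform exponential shrinking of the basic pieces, and uniformly bounded distortion of the inverse branches. For partition elements $P_i(g),P_j(g)$ and a component $Q$ of $g^{-N}(P_j(g))$ with $Q\subset P_i(g)$, put $D_Q=\varphi_{i,g}(Q)\subset\{|z|\le1\}$ and $\Phi_Q=\varphi_{j,g}\circ g^{N}\circ\varphi_{i,g}^{-1}\colon D_Q\to\{|z|\le1\}$; since $G(g)\subset U(g)$ and $Y(g)\cap U(g)=\emptyset$, every critical point of $g^N$ in $\overline{D_Q}$ is interior to $D_Q$, so $\Phi_Q$ embeds each arc $\beta_Q$ of $\partial D_Q\cap\{|z|=1\}$ real‑analytically into $\{|z|=1\}$, and --- $\beta_Q$ being an arc of the unit circle and $\Phi_Q(\beta_Q)$ a union of full edges of $G(g)$ carried into the unit circle by $\varphi_{j,g}$ --- extends by Schwarz reflection to a holomorphic map on a neighbourhood of $\beta_Q$ in $\overline{\mathbb C}$. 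Because $G(g)\subset g^{-1}(G(g))$ and the partition is Markov, for each component $Q'$ of $g^{-N\ell}(P_i(g))$ in $P_j(g)$ the map $\varphi_{i,g}\circ g^{N\ell}\circ\varphi_{j,g}^{-1}$, restricted to any one of the finitely many arcs of the unit circle where it is defined, is the inverse of a composition $\Psi_w=\Psi_{Q_1}\circ\cdots\circ\Psi_{Q_\ell}$ of the injective holomorphic branches $\Psi_{Q_k}=\Phi_{Q_k}^{-1}$ along a path $w$ in the Markov graph. It therefore suffices to produce $\lambda<1$, $C\ge1$, $c_0>0$, depending only on $f$ and the bounds in the hypothesis, such that the image $\beta$ of $\Psi_w$ has length $\le C\lambda^\ell$, its domain arc $A_w$ has length $\ge c_0$, and $\Psi_w$ has distortion $\le C$ on $A_w$: then $|(\varphi_{i,g}\circ g^{N\ell}\circ\varphi_{j,g}^{-1})'|=|(\Psi_w)'|^{-1}\ge c_0(C^2\lambda^\ell)^{-1}$ wherever defined, which exceeds $1$ once $\ell$ is fixed with $C^2\lambda^\ell<c_0$.

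\noindent\emph{Shrinking, and the lower bound on domains.} By Lemma \ref{2.0}, $g^N$ is expanding on $G(g)$ in the spherical metric with constant $\ge\mu_0>1$; given the assumed boundedness of the moduli of the pairs $\bigl(\bigcup_{i\in I}P_i(g),\,g^{-N}(X(g))\cap\partial(\bigcup_{i\in I}P_i(g))\bigr)$ one checks, as in \ref{2.0}, that $\mu_0$ can be taken independent of $g\in V_1$; and the same modulus bounds, fed into the bounded‑turning argument of \ref{1.11}, show that the $\partial P_i(g)$ are quasicircles with a uniform constant, so that each $\varphi_{i,g}$ is quasisymmetric --- hence uniformly H\"older --- on $\partial P_i(g)$. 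Since $g^{N\ell}$ carries each edge of $g^{-N\ell}(G(g))$ homeomorphically onto an edge of $G(g)$ with spherical expansion $\ge\mu_0^\ell$, such an edge has spherical diameter $\le\mu_0^{-\ell}\operatorname{diam}G(g)$, hence, after applying the relevant $\varphi_{i,g}$, Euclidean diameter $\le C\lambda^\ell$ with $\lambda=\mu_0^{-\alpha}<1$, $\alpha$ the uniform H\"older exponent; as each component of $\beta$ is such an edge, $|\beta|\le C\lambda^\ell$. For $A_w$: whenever the relevant component of $\beta$ is a nondegenerate arc it contains, on $\partial P_j(g)$, a full edge of $g^{-N\ell}(G(g))$, whose $g^{N\ell}$‑image is a full edge of $G(g)$ on $\partial P_i(g)$; since the number of vertices of $G(g)$ is constant on $V_1$ by the isotopy hypothesis and consecutive vertices $\varphi_{i,g}(X(g))$ on the unit circle are at Euclidean distance $\ge\delta_0>0$ (again from the modulus bound on pairs), such an edge has length $\ge\delta_0$, so $|A_w|\ge c_0:=\delta_0$ (the finitely many degenerate possibilities, where a single boundary derivative is at stake, are disposed of by the same mechanism after enlarging $\ell$).

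\noindent\emph{Bounded distortion.} This is the heart of the argument. In the $\varphi_{j,g}$‑coordinate the inverse branch $\Psi_{Q_k}$ is injective and holomorphic on $\overline{\mathbb C}$ minus a union of arcs of the unit circle, each the $\varphi$‑image of a full edge of $G(g)$; by Koebe's distortion theorem its distortion is bounded by a function of the hyperbolic distance, inside this domain, from the arc to which it is applied to the complementary arcs. The r\^ole of the ``two vertices'' hypothesis is to keep every stage of every admissible composition at a uniform such distance: since for every component $Q$ of $g^{-N}(P_j(g))$ in $P_i(g)$ the complement of $\beta_Q$ in $\partial P_i(g)$ contains an arc carrying two vertices of $G(g)$ (hence an arc of length $\ge\delta_0$ after uniformising), the arcs that arise in the composition always lie in a ``gap'' of definite relative length between the removed edges, bounded away from the vertices of $G(g)$ at which the reflection domains pinch. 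Since by Lemma \ref{2.0} the successive images $\beta_{Q_k\cdots Q_\ell}$ shrink geometrically, the Koebe bounds for the individual factors combine into a convergent product, yielding a distortion bound for $\Psi_w$ on $A_w$ independent of $g$, $\ell$ and $w$. Together with the preceding paragraph and the counting in the first paragraph, this completes the proof.

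\noindent\emph{Main obstacle.} The delicate point is precisely the one just flagged: verifying in the $\varphi_{i,g}$‑coordinates that the ``two vertices'' condition, with the modulus bounds, really does force \emph{every} arc arising in \emph{every} admissible composition to sit a uniform relative distance from the pinching of the Schwarz‑reflection domains at the vertices of $G(g)$ --- a combinatorial‑conformal bookkeeping of how the arcs $\beta_Q$ are positioned inside the circles $\partial P_i(g)$ under the Markov dynamics, further complicated by the fact that critical values of $g^N$ (as opposed to of $g$) may lie on $G(g)$. A pervasive secondary burden is that all the constants above --- the expansion constant of \ref{2.0}, the quasicircle constants, the vertex separation $\delta_0$ --- must be obtained uniformly over the non‑compact parameter set $V_1$ from the stated modulus hypotheses, rather than by compactness.
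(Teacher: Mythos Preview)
Your route is genuinely different from the paper's. The paper does not invoke Lemma~\ref{2.0}, the quasicircle argument of \ref{1.11}, or Koebe distortion. Instead it builds, directly from the hypotheses of \ref{2.11}, an auxiliary metric $d_p$ on the edges: for each boundary arc $e\subset\partial Q\cap\partial P_i(g)$ of a component $Q$ of $\overline{\mathbb C}\setminus g^{-N}(G(g))$ one chooses a topological disc $Q'$ (a union of such components inside $P_i(g)$) so that $e$ lies in the interior of $e'=\partial Q'\cap\partial P_i(g)$, maps $Q'$ conformally to the upper half-disc sending $e'$ to $[-1,1]$, and restricts the Poincar\'e metric of the unit disc. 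Expansion of $g^N$ in $d_p$ is then a Schwarz--Pick comparison: the moduli bounds force the double (across $g^N(e')$) of the corresponding $Q_1'\subset g^N(Q')$ to sit in $\{|z|\le r\}$ with $r<1$ bounded from $1$ after normalising the double of $g^N(Q')$. Lipschitz equivalence of $d_p$ with the Euclidean metric comes from the same moduli bounds, and the passage to Euclidean expansion of $g^{N\ell}$ is then the arithmetic in your first paragraph.

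There are two genuine gaps in your plan. First, you import the spherical expansion constant from \ref{2.0} and the H\"older exponent from the quasicircle argument of \ref{1.11}, but neither lemma has the hypotheses of \ref{2.11}: \ref{2.0} needs a lower bound on annular moduli in $U(g)\setminus U_1(g)$, and \ref{1.11} uses the specific inductive structure of Section~\ref{1}. You assert these can be recovered from the marked-disc moduli bounds of \ref{2.11}, but do not carry this out; the paper sidesteps the issue entirely by working only with those moduli. Second, your Koebe step mis-describes the reflected domains: the Schwarz reflection of $\Phi_Q$ is local to each arc $\beta_Q$, so the inverse branch $\Psi_Q$ is holomorphic on a neighbourhood of $\Phi_Q(\beta_Q)$ whose size is governed by how far $D_Q$ extends on either side of $\beta_Q$, not on $\overline{\mathbb C}$ minus finitely many arcs. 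Controlling that neighbourhood uniformly is exactly what the paper's doubled-disc construction does in one stroke, and is where the ``two vertices'' condition actually enters: it guarantees that the enlargement $Q'\supset Q$ (and $g^N(Q')\supset Q_1'$) can be chosen as a disc with $e$ strictly interior to $e'$, so that the doubling produces an honest annulus of definite modulus. Your reading of that condition as keeping arcs away from vertex-pinching is not how it functions in the argument.
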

  
\noindent{\em{Proof.}} It suffices to bound below, by  some $\mu >1$, the  derivative of $\varphi _{i,g}\circ g^{N}\circ \varphi _{j,g}^{-1}$, with respect to a suitable metric $d_p$ which we can show to be boundedly Lipschitz equivalent to the Euclidean metric $d_e$. Then the derivative of  $\varphi _{i,g}\circ g^{N\ell }\circ \varphi _{j,g}^{-1}$ with respect to to $d_p$ is $\ge \mu ^\ell $, and if $d_p/d_e$ is bounded between $C^{\pm 1}$ for some $C\ge 1$, we see that the derivative with respect to $d_e$ is $\ge C^{-1}\mu ^{\ell }$, giving expansion for all $\ell $ such that $C^{-1}\mu ^{\ell }>1$.  So it remains to define $d_p$ so that these properties are satisfied. This is the restriction of a Poincar\'e metric on a suitable surface, one for each component $e$ of $\partial Q\cap \partial P_i(g)$, or union of two such components round a vertex of $g^{-N}(G(g))$ in $\partial Q$, where $Q$ is the closure of a component of $\overline{\mathbb C}\setminus g^{-N}(G(g))$ with $Q\subset P_i(g)$ and $e\subset \partial Q$. For each such component, we consider a union $Q'$ of closures of components of $\overline{\mathbb C}\setminus g^{-N}(G(g))$ contained in $P_i(g)$,  such that $Q'$ is a topological disc and such that the connected component $e'$ of $\partial Q'\cap \partial P_i(g)$ which contains $e$ has $e$ in its interior. We can assume without loss of generality, replacing $G(g)$ by $g^{-M}(G(g))$  for a suitable $M$ if necessary, that  the image of   $Q'$ under $g^N$ is also a closed topological disc -- obviously of the form $\cup _{j\in J}P_j(g)$ --- and that $g^N$ is  a homeomorphism on $e'$. So there is a map of $Q'$ to $\{ z:|z|\le 1,\mbox{Im}(z)\ge 0\} $ which maps $e'$ to the interval $[-1,1]$, and which is conformal on the interior. We then take the restriction of the Poincar\'e metric on the unit disc to $(-1,1)$. This is the metric $d_p$ on $\mbox{int}(e')\supset e$.  The image of $e$ under $g^N$ is an edge of $G(g)$ in $\partial P_j(g)$, or a union of two edges round a vertex in $\partial P_j(g)$, for some $j\in J$. We take the corresponding metric $d_p$ on each edge of $g^{-N}(G(g))$ in $\partial P_j(g)$. Take any edge $e_1$ of $g^{-N}(G(g))$ or union of two edges  of $g^{-N}(G(g))$ which are subsets of edges of $G(g)$, adjacent to a vertex of $G(g)$ in $P_j(g)$, with $e_1\subset e$. Let $Q_1$ be the component of $\overline{\mathbb C}\setminus g^{-N}(G(g))$, and $e_1\subset \partial Q_1$ and $Q_1\subset g^N(Q)$.  Let $Q_1'$ be the union of closures of components of  $\overline{\mathbb C}\setminus g^{-N}(G)$ with $Q_1\subset Q_1'$ which is used to define the metric $d_p$ on $e_1$. Then $Q_1'\subset g^N(Q')$, and by the hypotheses, if we double $g^N(Q')$ across $g^N(e')$ by Schwarz reflection, and then normalise, the image of the double of $Q_1'$ within this is contained in $\{ z:|z|\le r\} $, for some $r<1$ bounded from $1$, simply because there are just finitely many edges. It follows that $g^N$ is expanding on $e$ with respect  to the metric $d_p$, with expansion constant bounded from $1$. \ep

\subsection{Real-analytic maps $h_{1,g}$ and $h_{2,g}$}\label{2.10}
Now each edge of $G(g)$ is in the image of two maps $\varphi _{i_1,g}$ and $\varphi _{i_2,g}$, where the edge is a connected component of $\partial P_{i_1}(g)\cap \partial P_{i_2}(g)$. Since $G(g)\subset g^{-1}(G(g))$, it is also the case that each edge is contained in a union of components of sets $g^{-1}(P_{j_1}(g)\cap P_{j_2}(g))$, where these sets are disjoint apart from some common endpoints. It follows that from $g$, and after imposing a direction on each edge of $G(g)$, we obtain two  real-analytic maps $h_{1,g}$ and $h_{2,g}$, defined piecewise by $\varphi_{j_1,g}\circ g\circ \varphi _{i_1,g}^{-1}$ and $\varphi _{j_2}\circ g\circ \varphi _{i_2,g}$, each mapping a finite union of intervals to itself, mapping endpoints to endpoints, except for being two-valued at finitely many interior points in the intervals, but at these points, the right and left-derivatives exist  and coincide, so that the derivative is single valued at such points, and extends continuously in the neighbourhood of any such point. These two maps are quasi-symmetrically conjugate, because the maps $\varphi _{i,g}$ are quasi-conformal. The quasi-symmetry is unique, and the pair $(\overline{\mathbb C},g^{-1}(G))$ can be reconstructed from it, up to  M\"obius transformation of $\overline{\mathbb C}$. In \ref{2.3}, we make this idea more precise. Lemma \ref{2.11} shows that the hypotheses are satisfied. 

Note that it is possible for the image of $h_{1,g}$ to intersect the domain of $h_{2,g}$, and vice versa, if $g$ maps some edge of $G$ over itself with direction reversed. In that case, since the domain and image of $h_{\ell ,g}$ are to be the same, it can happen that $h_{1,g}$ and $h_{2,g}$ agree on a nonempty intersection between their two domains. But we do not need to make any special consideration of this possibility and even if this happens the quasisymmetric conjugacy constructed in \ref{2.3} need not be the identity. 

\begin{lemma}\label{2.3} Let  $I_{i,t}$ be finite intervals for $1\le i\le k$ and $t=1$, $2$. Let
$$h_t:\bigcup _{i=1}^kI_{i,t}\to \bigcup _{i=1}^kI_{i,t}$$ 
 be  $C^2$ maps which are multivalued just at points which are mapped to endpoints of intervals, but with well-defined continuous derivatives at such points, such that $h_{t}(I_{i,t})$ is a union of intervals $I_{j,t}$ for each of $t=1$, $2$, and $I_{j,1}\subset h_1(I_{i,1})$ if and only if $I_{j,2}\subset h_2(I_{i,2})$, and  $I_{i,t}\cap h_1^{-1}(I_{j,t})$ has at most one component, for both $t=1$ and $2$. Suppose also that there is $N$ such that $h_1^n$ and $h_2^n$ are expanding with respect to the Euclidean metric for all $n\ge N$.  Then $h_1$ and $h_2$ are quasi-symmetrically conjugate, with the norm  of the quasi-symmetric  conjugacy bounded in terms of $N$ and of the bound of the expansion constants of $h_1^N$ and $h_2^N$ from $1$.\end{lemma}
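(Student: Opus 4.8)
The plan is to build the conjugacy from the combinatorics and then upgrade it to a quasi-symmetry by a distortion estimate. Let $\Sigma$ be the one-sided subshift of finite type whose transitions record, for each $i$, which intervals $I_{j,r}$ occur in $h_r(I_{i,r})$ and in what order; by hypothesis this data is the same for $r=1$ and $r=2$. Since $h_r'$ never vanishes (the maps are expanding), each $h_r\vert I_{i,r}$ is monotone, so for an admissible word $w=(a_0,\dots,a_{n-1})$ the cylinder $I_w^{(r)}=\bigcap_{k=0}^{n-1}h_r^{-k}(I_{a_k,r})$ is a single interval on which $h_r^{\,n-1}$ is a homeomorphism onto $I_{a_{n-1},r}$. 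Expansion of $h_r^n$ for $n\ge N$ forces $\mathrm{diam}(I_w^{(r)})\to 0$ as $|w|\to\infty$, so the coding map $\pi_r\colon\Sigma\to\bigcup_iI_{i,r}$ (sending a word to the point common to its cylinders) is a continuous surjection with $h_r\circ\pi_r=\pi_r\circ\sigma$, injective off the countable set of words coding cylinder endpoints --- which are eventually periodic, since $h_r$ maps endpoints to endpoints. Because the combinatorial data, including the order of traversal, is identical, $\pi_1$ and $\pi_2$ induce the same identifications; hence $\phi:=\pi_2\circ\pi_1^{-1}$ is a homeomorphism of $\bigcup_iI_{i,1}$ onto $\bigcup_iI_{i,2}$ with $\phi\circ h_1=h_2\circ\phi$ and $\phi(I_w^{(1)})=I_w^{(2)}$ for every admissible $w$.

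The analytic input is \emph{bounded distortion}: there is $C_0$ so that for $r=1,2$, every admissible $w$ of length $n$, and all $\xi,\xi'\in I_w^{(r)}$, $\bigl|\log|(h_r^n)'(\xi)|-\log|(h_r^n)'(\xi')|\bigr|\le C_0$. This comes from telescoping $\log|(h_r^n)'|=\sum_{k=0}^{n-1}\log|h_r'\circ h_r^k|$, using that $\log|h_r'|$ is Lipschitz ($h_r$ is $C^2$ with non-vanishing derivative on a compact domain) and that $h_r^k\xi$ and $h_r^k\xi'$ both lie in $I_{(a_k,\dots,a_{n-1})}^{(r)}$, whose diameter decays geometrically in $n-k$; the resulting geometric series is bounded in terms of $N$ and the expansion constant (and the $C^2$-data, which is fixed in the setting of Lemma \ref{2.11}). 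Two consequences. First, each nested partition has \emph{bounded geometry}: pushing $I_{wj}^{(r)}\subset I_w^{(r)}$ forward by $h_r^{\,n-1}$ and applying the distortion bound reduces $\mathrm{diam}(I_{wj}^{(r)})/\mathrm{diam}(I_w^{(r)})$, up to a factor $C_0^2$, to one of the finitely many ratios $\mathrm{diam}(I_{ij}^{(r)})/\mathrm{diam}(I_{i,r})$, so this ratio is bounded below and --- when $I_w^{(r)}$ has more than one extension --- bounded above away from $1$; iterating, the generation-$m$ cylinder at any point shrinks by a definite factor over any fixed number of generations. Second, carrying out the same reduction for both $r=1$ and $r=2$ shows that $\phi$ distorts the ratio of diameters of a cylinder and one of its extensions by a bounded factor, so the two nested partitions have comparable geometry under $\phi$.

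From here $\phi$ is quasi-symmetric by the usual adaptive-scale argument. Since $\phi(I_{i,1})=I_{i,2}$ and there are finitely many intervals $I_{i,1}$, it suffices to bound $|\phi(x+t)-\phi(x)|/|\phi(x)-\phi(x-t)|$ for $x,x\pm t\in I_{i,1}$. Take $J=I_m^{(1)}(x)$ of least generation with $\mathrm{diam}(J)\le t$; by bounded geometry $\mathrm{diam}(J)\asymp t$. When $[x-t,x+t]\subset J$, apply $h_1^{\,m-1}$: it is a homeomorphism of $J$ onto a top interval with distortion $\le C_0$, so $x$ and $x\pm t$ land there at a definite scale; transport by $\phi$, using the comparability of the two geometries together with the fact that on each of the finitely many top intervals $\phi$ is a fixed homeomorphism, hence controls configurations at a definite scale; then pull back by the corresponding iterate of $h_2$, again with distortion $\le C_0$. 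Triples for which $[x-t,x+t]$ crosses a boundary point $p$ of $J$ are reduced to this case by passing instead to a cylinder of bounded higher generation adjacent to $p$ on the side of $x$, or, when $p$ is a persistent (eventually periodic) cut point, by comparing directly with $\phi(p)$ and using bounded geometry at $p$. The quasi-symmetry modulus is then assembled from $C_0$, the bounded-geometry constants, and the behaviour of $\phi$ at the base scale.

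I expect the main obstacle to be precisely this last passage --- converting bounded and comparable geometry of the two nested partitions into a quasi-symmetry modulus with the stated uniformity. Two points need care: the bookkeeping for triples straddling cylinder boundaries, in particular near eventually periodic cut points; and the claim that the modulus depends only on $N$ and the expansion bound. The latter rests on $C_0$ being so controlled, which holds once the $C^2$-norms of $h_1$ and $h_2$ are fixed (as they are wherever this lemma is invoked), and on the base-scale behaviour of the conjugacy being uniformly controlled --- which bounded distortion supplies, since it makes the relevant family of conjugacies equicontinuous. By contrast, the construction of $\phi$ and the distortion estimate itself are routine.
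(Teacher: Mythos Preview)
Your argument is correct and follows a standard route, but it is packaged differently from the paper's. The paper constructs the conjugacy by iterated pull-back rather than symbolic coding: starting from an affine identification $\varphi_0\colon I_{i,1}\to I_{i,2}$ on each interval, it defines $\varphi_{n+1}$ by $h_2\circ\varphi_{n+1}=\varphi_n\circ h_1$ and uses expansion to get uniform convergence of $\varphi_n$ and $\varphi_n^{-1}$. For quasi-symmetry the paper does not pass through a general bounded-geometry statement for the two cylinder partitions. Instead, given $x$ and $y$, it takes the greatest $n$ with $|h_1^n(x)-h_1^n(y)|\le c$, then a bounded $p$ so that $h_1^n(x)$, $h_1^n(y)$, $h_1^n((x+y)/2)$ are each separated by at least two (but boundedly many) points of $\bigcup_i h_1^{-p}(\partial I_{i,1})$. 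The point is that $\varphi=\varphi_{n+p}$ on $\bigcup_i h_1^{-(n+p)}(\partial I_{i,1})$, and $\varphi_{n+p}=S\circ\varphi_0\circ h_1^{n+p}$ with $S$ an inverse branch of $h_2^{n+p}$ and $\varphi_0$ affine; bounded distortion of $h_1^{n+p}$ and $S$ then controls the ratios among the $\varphi$-images of these cut points directly, and the images of $x$, $(x+y)/2$, $y$ are sandwiched between them. This sidesteps exactly the issue you flag about the base-scale behaviour of $\phi$: the paper never needs to invoke compactness or equicontinuity of the conjugacy on top intervals, because it only evaluates $\varphi$ at points where it equals an explicit approximant of the form (affine)$\circ$(bounded distortion). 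Your coding construction and bounded-geometry framework are conceptually cleaner and handle the boundary-crossing bookkeeping more uniformly; the paper's argument is more bare-hands but makes the dependence of the quasi-symmetry norm on the data more transparent.
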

\noindent{\em{Proof.}} This is standard. We simply choose 
$$\varphi _0:\bigcup _{i=1}^kI_{i,1}\to \bigcup _{i=1}^kI_{i,2}$$
 to be an  affine transformation (for example) restricted to $I_{i,1}$, mapping $I_{i,1}$ to $I_{i,2}$, for each $1\le i\le k$. Then $\varphi _n$ is defined inductively by the properties 
 $$h_2\circ \varphi _{n+1}=\varphi _n\circ h_1$$
  and 
  $$\varphi _{n+1}(I_{i,1})=I_{i,2}\mbox{ for each }1\le i\le k.$$ Then 
  $$\varphi _0\circ h_1^n=h_2^n\circ \varphi _n\mbox{  for all }n,$$
   and we deduce from this that 
   $$|\varphi _n(x)-\varphi _{n+1}(x)|\le C_2\lambda ^n$$
    for all $x$ and $n$, for some constant $C_2$ depending on  $C_1$, and some $\lambda <1$,  where 
    $$|h_2^n(x)-h_2^n(y)|\ge C_1\lambda ^{-n}$$
     for all $n$ and all $x$ and $y$ such that $h_2^m(x)$ and $h_2^m(y)$ are in the same set $I_{i_m,2}$, for all $0\le m\le n$. Then $\varphi _n$ converges uniformly to $\varphi $, with 
    $$\varphi \circ h_1=h_2\circ \varphi .$$ Similarly, using the expanding properties of $h_1$, we deduce that $\varphi _n^{-1}$ converges uniformly to $\varphi ^{-1}$. 

To prove quasi-symmetry of $\varphi $, we use the standard result that $(h_t^n)' $ varies by a bounded proportion on any interval $J$ such that $h_t^n(J)$ is a union of at most two subintervals of $\bigcup _{i=1}^kI_{i,t}$. This uses continuity of the derivative across the finitely many discontinuities of $h_t$. So then given any $x\ne y\in \bigcup _{i=1}^kI_{i,1}$ such that $|x-y|$ is sufficiently small, we choose the greatest  $n$ such that $|h_1^n(x)-h_1^n(y)|\le c$, for a suitable constant $c>0$ such that any interval of $\bigcup _{i=1}^kI_{i,1}$ which has length $\le c$ is mapped to a union of at most two intervals of $\bigcup _{i=1}^kI_{i,1}$. Then  
$$|h_1^{n+p}(x)-h_1^{n+p}(y)|$$
 is bounded above and below for any bounded $p$, and $(h_1^{n+p})'$ varies by a bounded proportion on the  interval $[x,y]$. So does  the derivative $S'$, on the smallest interval containing $h_1^{n+p}(x)$, $h_1^{n+p}(y)$, where $S$ is the branch of $h_2^{-(n+p)}$ such that $\varphi _{n+p}=S\circ \varphi _0\circ h_1^{n+p}$. We can choose $p$ so that each of  the points $h_1^n(x)$, $h_1^n(y)$, $h_1^n((x+y)/2)$ is separated by at least two  points from $\bigcup _{i=1}^kh_1^{-p}(\partial I_{i,1})$ --- but only boundedly many, by the bound on $p$. Now 
 $$\varphi _m=\varphi _{n+p}\mbox{ on }\bigcup _{i=1}^kh_1^{-(n+p)}(\partial I_{i,1})$$
  for all $m\ge n+p$, and hence 
  $$\varphi =\varphi _{n+p}\mbox{ on }\bigcup _{i=1}^kh_1^{-(n+p)}(\partial I_{i,1}).$$
  If $z_1$, $z_2$ and $z_3$ are any three distinct points of   $\bigcup _{i=1}^kh_1^{-(n+p)}(\partial I_{i,1})$ which are either between $x$ and $y$, or the nearest point on one side, then 
  $$\frac{|\varphi _{n+p}(z_1)-\varphi _{n+p}(z_2)|}{|\varphi_{n+p}(z_1)-\varphi _{n+p}(z_3)|}$$ 
  is bounded and bounded from $0$, that is, 
  $$\frac{|\varphi (z_1)-\varphi (z_2)|}{|\varphi (z_1)-\varphi (z_3)|}$$
   is bounded and bounded from $0$. But then since $|\varphi (x)-\varphi ((x+y)/2)|$ is bounded between some such $|\varphi (z_1)-\varphi (z_2)|$ and $|\varphi (z_1)-\varphi (z_3)|$, and similarly for $|\varphi (y)-\varphi ((x+y)/2)|$, we have upper and lower bounds on 
   $$\frac{|\varphi (x)-\varphi ((x+y)/2)|}{|\varphi (y)-\varphi ((x+y)/2)|},$$ and quasi-symmetry follows.\ep

We deduce the following.

\begin{lemma}\label{2.4} Let $V_1$ be as in Theoreom \ref{2.2}. For $f\in V_1$, let $P_i(f)$, $\varphi _{i,f}$ and $\psi _{i,f}$ be as previously defined. Let $\{ g_n:n\ge 0\} $  be any sequence in $V_1$ such that $Y(g_n)$ is combinatorially bounded from $G(g_n)$ for $n\ge 0$, and let $g_n\to g$. Let $X(g_n)$ denote the vertex set of $G(g_n)$. Then $g\in V_1$ if the moduli of 
\begin{equation}\label{2.4.1}\left(\bigcup _{i\in I}P_i(g_n),g^{-\ell }(X(g_n))\cap \partial \left(\bigcup _{i\in I}P_i(g_n)\right) \right)\end{equation}
 are bounded as $n\to \infty $ for any fixed $\ell $, and any finite set $I$ such that $\bigcup _{i\in I}P_i(g_n)$ is a topological disc, and, using this to normalise the maps $\varphi _{i,g_n}$ and $\psi _{i,g_n}$, the disc-preserving Blaschke products $\varphi _{i,g_n}\circ g_n\circ \psi _{i,g_n}^{-1}$ are also bounded.\end{lemma}

\noindent{\em{Proof.}} The bounds on moduli and Blaschke products ensure that the real analytic maps $h_{1,g_n}$ and $h_{2,g_n}$ have derivatives which are bounded above and below. Also, they extend to Blaschke products on neighbourhoods of intervals of the unit circle. By the hypothesis $r(g_n)\le r$ for all $n$, there is $N$ such that,  if $U'(g_n)$, $U_i'(g)$ are the unions of sets of ${\mathcal{P}}_r(g_n)$ , ${\mathcal{P}}_{r+i}(g)$ intersecting  $G(g)$, then $U_N'(g_n)\subset \mbox{int}(U'(g_n))$ and $g_n^N(U_N'(g_n))=U'(g_n)$. We have seen from \ref{2.11} and \ref{2.3} that the maps $h_{1,g_n}$ and $h_{2,g_n}$ are boundedly quasi-symmetrically conjugate, that is, there is a quasi-symmetric homeomorphism $\varphi _n$ whose domain is the domain, and contains the image,  of $h_{1,g_n}$, and whose image is the domain, and contains the image,  of $h_{2,g_n}$, that is, a finite union of intervals in each case, such that 
$$\varphi _n\circ h_{1,g_n}=h_{2,g_n}\circ \varphi _n.$$

Then $\varphi _n$ can be used to define a Beltrami differential $\mu _n$ on $\overline{\mathbb C}$, which is uniformly bounded independently of $n$, as follows. This sphere is, topologically, a finite union of discs, with the boundary of each disc written as a finite union of arcs, and with each arc identified with one other, from a different disc, by $\varphi _n$ in one direction and $\varphi _n^{-1}$ in the other.  It is convenient to identify this sphere with the Riemann sphere $\overline{\mathbb C}$, in such a way that  each of the discs has piecewise smooth boundary, and the maps identifying the copies of the closed unit disc with the image discs in  $\overline{\mathbb C}$ are piecewise smooth. The union of the images of copies of the unit circle form a graph $\Gamma \subset \overline{\mathbb C}$. We then define a quasi-conformal homeomorphism $\psi _n$ from the union of copies of the closed unit disc to $\overline{\mathbb C}$ such that, whenever  $I_1$ and $I_2$ are arcs on the boundaries of discs $D_1$ and $D_2$, identified by $\varphi _n:I_1\to I_2$,  we have $\psi _n$ on $I_2$ is defined by $\psi _n\circ \varphi _n^{-1}$, using $\varphi _n^{-1}:I_2\to I_1$ and $\psi _n:I_1\to \overline{\mathbb C}$. The q-c norm of $\psi _n$ can clearly be bounded in terms of the q-s norm of $\varphi _n$, and the identification we choose of the copies of the closed unit disc with their images in $\overline{\mathbb C}$. This means that the q-c norm of $\varphi _n$ can be bounded independently of $n$. We then define 
$$\mu _n=(\varphi _n)_*0$$
 on the image of each copy of the open unit disc, where $0$  simply denotes the Beltrami differential which is $0$ everywhere on the open unit disc. Then $\mu _n$  is defined a.e. on $\overline{\mathbb C}$, and is uniformly bounded, in $n$, in the $L_\infty $ norm.  

So there is a quasi-conformal map $\chi _n:\overline{\mathbb C}\to \overline{\mathbb C}$, with q-c norm which is uniformly bounded in $n$, such that $\mu _n=\chi _n^*0$, where, here, $0$ denotes the Beltrami differential which is $0$ everywhere on $\overline{\mathbb C}$. By construction, there is a conformal map of $\overline{\mathbb C}$ which maps $\chi _n(\Gamma )$ to $G(g_n)$. So we can assume without loss of generality that $\chi _n(\Gamma )=G(g_n)$. By taking limits, we can assume that $\chi _n$ has a limit $\chi $ in the uniform topology, which is a quasi-conformal homeomorphism. So $\chi _n(\Gamma )$ has a limit $\chi (\Gamma )$, which is also a graph, and since $G(g_n)\subset g_n^{-1}(G(g_n))$, we have $\chi (\Gamma )\subset g^{-1}(\chi (\Gamma ))$. We therefore write $G(g)=\chi (\gamma )$ and $G(g)$ is homeomorphic to $G(g_n)$ under $\chi \circ \chi _n^{-1}$. We can also assume, by restricting to a subsequence of $g_n$ if necessary,  that for each $i\le r$, and all $n$, the sets $g_n^{-i}(G(g_n))\cup Y(g_n)$ are isotopic.  
The bounds on moduli (\ref{2.4.1}) then give a lower bound on modulus of each component of $U'(g_n)\setminus U_N'(g_n)$, independent of $n$. So then $U'(g_n)$ converges to $U'(g)$, while  $U_N'(g_n)$ converges isotopically  to $U_N'(g)$ and $g\in V_1$ with $r(g)\le r+N$. Note that we could have $Y(g)\cap \partial U'(g)\ne \emptyset $. 
 \ep

Since $G(g)$ varies isotopically for $g\in V_1$, the set $X(g)$ of vertices of $G(g)$ also varies isotopically for  $g\in V_1$. But $X(g)$ is a finite forward invariant set for all $g\in V_1$. Hence $X(g)$ varies locally isotopically for $g$ in the dense open  subset $V_0$ of $V$ such that the multiplier of any periodic points in $X(g)$ is not $1$, and there are no critical points in $X(g)$. We have $V_1\subset V_0$, since $g^N$ is expanding near $G(g)$ for $g\in V_1$, for a suitable $N$, by \ref{2.1}. Now, to prove Theorem \ref{2.2}, we need to verify the conditions of Lemma \ref{2.4}.

\begin{definition} A path $\alpha $ with endpoints in $X(g)$ {\em{has homotopy length $\le M$}} if it can be isotoped, by an isotopy which is the identity on $X(g)$, to be arbitrarily uniformly close to a path in $G(g)$ which crosses $\le M$ edges of $G(g)$. 
\end{definition}

\begin{lemma}\label{2.5} Let $V$ and $V_1$ be as in \ref{2.2}. Let $V_0$ be as at the end of \ref{2.4}. Fix $g_0\in V_1$. Let $W_0$ be a compact subset of $V$ containing $g_0$, and let $M_0>0$ be given. There is $M_1=M_1(M_0,W_0)$ with the following property. Let $g\in V_1\cap W_0$.  If $e$ is an edge  of $G(g)$ and  $e'\subset e$ is a connected set which shares its first endpoint with $e$, and $\alpha $ is any  extension of $e'$ by spherical length $\le M_0$ to a path with both endpoints in $X(g)$, then $\alpha $ has homotopy length $\le M_1$.\end{lemma}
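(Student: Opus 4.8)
The plan is to argue by contradiction, reducing everything to a compactness statement whose only non-routine ingredient is the behaviour of the graphs $G(g)$ as $g$ approaches $\partial V_1$. First I reduce the problem: since $e'$ lies inside $G(g)$ it contributes at most one to any count of edge-crossings, so it suffices to bound, uniformly in $g$, the homotopy length (with respect to $G(g)$) of a path $\beta$ of spherical length $\le M_0$ whose initial point lies on $G(g)$ and whose terminal point lies in $X(g)$. If no such bound $M_1=M_1(M_0,W_0)$ exists, there are $g_n\in V_1$, joined to $g_0$ by paths in $V_0$, edges $e_n$ of $G(g_n)$, connected $e'_n\subset e_n$ starting at the first vertex of $e_n$, and extensions $\beta_n$ of spherical length $\le M_0$, with $\alpha_n=e'_n\cup\beta_n$ of homotopy length $>n$. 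By compactness of $W_0$ pass to a subsequence with $g_n\to g_\infty\in W_0$; reparametrising $\beta_n$ by arc length on $[0,1]$ makes the $\beta_n$ uniformly Lipschitz, so after a further subsequence $\beta_n\to\beta_\infty$ uniformly, $\beta_\infty$ of spherical length $\le M_0$, with junction point $\beta_n(0)\in G(g_n)\to\beta_\infty(0)$. The vertex sets $X(g_n)$, being the vertices of a graph varying isotopically on $V_1$, subconverge to a set $X_\infty$; the case in which two vertices collide (which can only occur at a point that becomes critical at $g_\infty$, hence at $g_\infty\notin V_0$) is folded into the analysis of the limit graph below.

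The heart of the matter is to produce a subsequential limit $G_\infty$ of the $G(g_n)$ which is again a finite graph, with vertex set $X_\infty$, its edges joining the same pairs of points, pairwise meeting only in vertices, and of positive length. Granting this, the argument closes quickly. Choose $\varepsilon_\infty>0$ smaller than the least distance between two disjoint closed edges of $G_\infty$, the least distance from an edge to a non-incident vertex, and the least edge-length. Then a purely local analysis shows any path of spherical diameter $\le\varepsilon_\infty$ with arbitrary endpoints is isotopic, rel its endpoints and rel $X_\infty$, within an $O(\varepsilon_\infty)$-neighbourhood of itself, to one crossing at most one edge of $G_\infty$: near an interior point of an edge two consecutive crossings cancel across a complementary disc, and near a trivalent vertex the three sector-germs are pairwise adjacent across a single edge-germ. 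Subdividing $\beta_\infty$ into $\lceil M_0/\varepsilon_\infty\rceil$ consecutive sub-paths of diameter $\le\varepsilon_\infty$ and concatenating these local isotopies gives homotopy length $\le 1+\lceil M_0/\varepsilon_\infty\rceil$ for $\alpha_\infty$ with respect to $G_\infty$. Transporting $\alpha_n$ onto $\alpha_\infty$ by the small ambient isotopy carrying $(G(g_n),X(g_n))$ to $(G_\infty,X_\infty)$ preserves homotopy length, contradicting homotopy length $>n$ for $n$ large. (When $g_\infty\in V_1$ one simply takes $G_\infty=G(g_\infty)$ and this entire step is immediate.)

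The main obstacle is precisely producing this tame limit $G_\infty$ when $g_\infty\in\overline{V_1}\setminus V_1$: a priori the edges of $G(g_n)$ are sub-arcs of quasi-circles and could become arbitrarily wild. I would control them using three inputs: the invariance $G(g_n)\subset g_n^{-1}(G(g_n))$; the expansion of $g_n^{N}$ on a neighbourhood of $G(g_n)$ (Lemma \ref{2.0}); and the standing hypothesis $Y(g_n)\cap U(g_n)=\emptyset$, which keeps every critical value off a fixed-combinatorial-radius neighbourhood of $G(g_n)$. Together these let one pull back the geometry of $G(g_n)$ from a definite scale: each edge is the image, under a branch of $g_n^{-N}$ defined on a region disjoint from the critical values, of an arc of $g_n^{N}(G(g_n))$ lying in one of the finitely many discs $\bigcup_{i\in I}P_i(g_n)$, and by Lemma \ref{2.11} and the discussion preceding Lemma \ref{2.3} these branches have distortion bounded in terms of the expansion constants, the relevant moduli and Blaschke products remaining bounded unless we are already beyond the failure of combinatorial boundedness --- in which case the degeneration is confined to a neighbourhood of a critical value, disjoint from $G(g_n)$, so $G(g_n)$ itself and its Hausdorff limit stay tame. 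Making this dichotomy precise, together with the corresponding control of the vertex collisions, is the one genuinely delicate point; everything else is the routine compactness and planar topology indicated above.
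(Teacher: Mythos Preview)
Your approach has a genuine gap, and it is circular with respect to the paper's logical order. The crucial step you isolate --- producing a tame finite-graph limit $G_\infty$ of the $G(g_n)$ when $g_\infty\in\overline{V_1}\setminus V_1$ --- cannot be done with the tools available \emph{before} Lemma \ref{2.5}. The moduli bounds you appeal to (the expansion estimate of Lemma \ref{2.11}, and implicitly the bounds of Lemmas \ref{2.8} and \ref{2.9}) all have hypotheses which, in the sequence of lemmas, are supplied precisely by \ref{2.5}--\ref{2.7}. Indeed the whole chain \ref{2.5}$\Rightarrow$\ref{2.6}$\Rightarrow$\ref{2.7}$\Rightarrow$\ref{2.8}$\Rightarrow$\ref{2.9}$\Rightarrow$\ref{2.4}$\Rightarrow$Theorem \ref{2.2} is what eventually yields that a limit graph exists at a boundary point under combinatorial boundedness; you are invoking the conclusion to prove the first link. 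Your proposed dichotomy (``the degeneration is confined to a neighbourhood of a critical value, disjoint from $G(g_n)$'') is also not available here: as $g_n\to g_\infty\in\partial V_1$ the integers $n(g_n)$ are, by the very statement of Theorem \ref{2.2}, unbounded, which means the critical value is \emph{not} combinatorially bounded away from $G(g_n)$, so you cannot separate the degeneration from the graph in the way you suggest.

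The paper's proof avoids compactness altogether and is purely dynamical, carried out for a single fixed $g$. The key input is much weaker than control of the graph: it uses only that the \emph{vertex set} $X(g)$ moves a bounded amount from $X(g_0)$ along a bounded path in $V_0\cap W_0$. This already gives that any path of bounded spherical length between points of $X(g)$ has bounded homotopy length (transport to $g_0$, where $G(g_0)$ is fixed). One then fixes $k$ so that $g^k$ sends each edge over at least two edges, and argues by contradiction: if $e'*\alpha_0$ had homotopy length $\ge M_1$, then $g^k(e'*\alpha_0)$ has homotopy length $\ge 2M_1$, while the ``overshoot'' pieces $g^k(e'\setminus e_k)$ (a bounded number of full edges) and $\overline{\alpha_k}*g^k(\alpha_0)$ (a short spherical path between points of $X(g)$) have homotopy length bounded by some $M_0'$ depending only on $k$ and $M_0$. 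Hence the remaining sub-edge piece $g^k(e'\cap e_k)*\alpha_k$ still has homotopy length $>M_1$, and one iterates down to a single edge for the contradiction. No limit object is needed, and nothing about the geometry of the edges of $G(g)$ is used beyond the combinatorics of $g:G(g)\to G(g)$, which is constant on $V_1$.
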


\noindent{\em{Proof.}} Let  $g_t$ be a path in $V_1$  between $g_0$ and $g=g_1$. Since $V\setminus V_0$ has codimension two, we can assume without loss of generality, enlarging $W_0$ if necessary, that $g_t\in V_0\cap W_0$ for all $t$, so that $X(g_t)$ varies isotopically. We can choose the path $g_t$ so that its length is bounded in terms of $W_0$, using any suitable Riemannian metric on $V$, for example, that coming from the embedding of $V$ in $\mathbb C^m$ (since $V$ is an affine variety). 

Now given $R >1$, there is $k$ such that $g^k(e'')$ is a union of at least $R$ edges for each edge $e''$ of $G(g)$. This is true for all $g\in V_1$, because the dynamics of the map $g:G(g)\to G(g)$ is independent of $g$. We take $R=2$. For this $k$ (or, indeed, any strictly positive integer), $\bigcup _{\ell \ge 0}g^{-\ell k}(X(g))$ is dense in $G(g)$, because, for any edge $e$ of $G(g)$, the maximum diameter of any component of $g^{-n}(e)$ tends to $0$ as $n\to \infty $, by ref{2.1}. So  it suffices to prove the lemma for $e'\subset e$ sharing first endpoint with $e$ and with the second endpoint in $g^{-\ell k}(X(g))\setminus g^{-(\ell -1)k}(X(g))$ for some $\ell \ge 0$, but we cannot obtain any bound on $\ell $. So fix such an $e'$. For each $i\le \ell $, let  $e_{ik}=e_{ik}(g)\subset e$ such that $g^{ik}(e_{ik})$ is an edge of $G(g)$, hence with endpoints in $X(g)$, such that the second endpoint of $e'$ is in $e_{ik}$, and is not the first endpoint of $e_{ik}$. 

Any point of $\overline{\mathbb C}$ is spherical distance $\le \pi $ from a point of $X(g)$ (assuming the sphere has radius $1$). Any path of bounded (spherical) distance between points of $X(g)$ is homotopically bounded, because of the bounded  distance between $X(g_0)$ and $X(g)$. We suppose for contradiction that, for some  path $\alpha _0$ of length $\le M_0$  from the second endpoint of $e'$ to a point of $X(g)$, the path $e'*\alpha _0$ has homotopy length $\ge M_1$. Then $g^k(e'*\alpha _0)$ has homotopy length $\ge 2M_1$. Now let $\alpha _k$ be a path of spherical length $\le M_0$ connecting the second endpoint of $g^k(e')$ to $X(g)$.  Now we have a bound on the homotopy length of $g^k(e'\setminus e_k)$ depending only on $k$, because this is a union of a number of edges of $G(g)$, where the number is bounded in terms of $k$. We also have a bound in terms of $k$ and  $M_0$ (and on $g_0$, but $g_0$ is fixed throughout)  on the spherical length of $\overline{\alpha _k}*g^k(\alpha _0)$, where $\overline{\alpha _k}$ denotes the reverse of $\alpha _k$. This is because the bound on the path between $g_0$ and $g$ gives a bound  on the spherical derivative of $g^k$ in terms of $M_0$ and $k$.    If $\varphi $ is the homeomorphism of $\overline{\mathbb C}$ given by the isotopy from the identity mapping $X(g)$ to $X(g_0)$, then $\varphi $ is bounded in terms of $M_0$. So we have a bound on the spherical length of $\varphi (\overline{\alpha _k}*g^k(\alpha _0))$. This is a path between points of $X(g_0)$. So we have a bound on the homotopy length of this path in terms of $M_0$ and $k$ (and $g_0$, but this is fixed throughout). But the homotopy length is the same as the homotopy  length of  $\overline{\alpha _k}*g^k(\alpha _0)$.   So  both $g^k(e'\setminus e_k)$ and $\overline{\alpha _k}*g^k(\alpha _0)$ have  homotopy length $\le M_0'$ where $M_0'$ is bounded in terms of $M_0$ and $k$. So then $g^k(e'\cap e_k)*\alpha _k$ has homotopy length $\ge 2M_1-2M_0'>M_1$ assuming that $M_1$ is sufficiently large given $M_0'$ and $k$, that is, sufficiently large given $M_0$.  Similarly, for each $i$,  $g^{k}((e'\cap e_{(i-1)k})\setminus e_{ik})$ and $\overline{\alpha _{ik}}*g^k(\alpha _{(i-1)k})$ have homotopy length $\le M_0'$, and hence we prove by induction that $g^{ik}(e_{ik}\cap e')$ has homotopy length $>M_1$ for all $i\ge 0$. For $i=\ell $ we obtain the required contradiction, because $g^{\ell k}(e'\cap e_{\ell k})$ is a single edge.\ep

\begin{corollary}\label{2.6} Let $V$, $V_1$, $g_0$, $M_0$, $W_0$ and $g$ be as in \ref{2.5}. There is $M_2>0$, depending on $M_0$, $W_0$ and $g_0$ with the following property. If $e'$ is any path in an edge of $G(g)$ then $e'$ is homotopic, via a homotopy fixing endpoints and $X(g)$, to a path of (spherical) length $\le M_2$.\end{corollary}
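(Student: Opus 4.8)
\medskip
\noindent\textbf{Proof plan.}
The plan is to reduce to Lemma~\ref{2.5} by cutting $e'$ into initial segments of an edge, and then to upgrade the combinatorial bound of Lemma~\ref{2.5} (``homotopy length $\le M_1$'') to a bound on spherical length, using that the triple $(\overline{\mathbb C},G(g),X(g))$ is topologically rigid along $V_1$ while the finite set $X(g)$ itself stays metrically tame. For the reduction: let $e$ be the edge of $G(g)$ containing $e'$, with first vertex $v$, let $a,b$ be the endpoints of $e'$, and assume (harmlessly) that $e'$ is an embedded sub-arc of $e$ and that neither $a$ nor $b$ is a vertex of $G(g)$ --- if one is, one runs the argument on the two pieces of $e'$ on either side of it. Writing $e_a\subset e$, $e_b\subset e$ for the initial segments of $e$ from $v$ to $a$ and to $b$, the class of $e'$ rel $\{a,b\}$ in $\overline{\mathbb C}\setminus(X(g)\setminus\{a,b\})$ equals $\overline{[e_a]}\cdot[e_b]$ in the fundamental groupoid (purely algebraically --- no representative is pushed through $v$). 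So it suffices to bound, up to homotopy fixing endpoints and $X(g)$, a representative of $[e_a]$ and of $[e_b]$, \emph{uniformly in the interior point $a$}, i.e.\ uniformly in how deep $a$ lies inside $e$ for the dynamics of $g^{N}$ on $G(g)$.

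For $c\in\{a,b\}$ I would extend $e_c$ by a spherical geodesic $\alpha$ from $c$ to a nearest point of $X(g)$; as the spherical diameter is $\pi$, $\mathrm{length}(\alpha)\le\pi$, and (enlarging $M_0$ to $\max(M_0,\pi)$ if needed) Lemma~\ref{2.5} applies to $e_c*\alpha$ and gives it homotopy length $\le M_1=M_1(M_0,W_0)$. Hence $e_c$ is homotopic rel endpoints and $X(g)$ to a path crossing at most $M_1$ edges of $G(g)$, followed by a geodesic of length $\le\pi$. The remaining, and main, task is to show that a path crossing at most $M_1$ edges of $G(g)$ is homotopic rel endpoints and $X(g)$ to one of spherical length bounded in terms of $M_1$, $g_0$, $W_0$.

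This is where the work is, since a single edge of $G(g)$ can be extremely long in the spherical metric when $g$ is near $\partial V_1$ --- exactly the mechanism of Theorem~\ref{2.2} --- so ``few edges crossed'' does not literally mean ``short''. The point is that a metrically long edge of $G(g)$ still has \emph{bounded combinatorial complexity relative to $X(g)$}: since $V_1$ is connected and $G(g)$ varies isotopically on $V_1$, the triple $(\overline{\mathbb C},G(g),X(g))$ is homeomorphic to the fixed triple $(\overline{\mathbb C},G(g_0),X(g_0))$, so an arc contained in a single edge of $G(g)$ lies in one of boundedly many homotopy classes rel $X(g)$ and its endpoints, with the bound depending on $g_0$ (and on $M_1$ once one composes at most $M_1$ of them). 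Meanwhile $X(g)$ is metrically tame: $g$ is joined to $g_0$ by a path in $V_0$ of length bounded in terms of $W_0$, along which $X(\cdot)$ varies isotopically --- indeed smoothly, by the implicit function theorem from the defining conditions of $V_0$ --- so there is a bi-Lipschitz homeomorphism of $\overline{\mathbb C}$, with constant bounded in $g_0$ and $W_0$, carrying $X(g_0)$ onto $X(g)$; a fixed complete metric on $\overline{\mathbb C}\setminus X(g)$ that agrees with the spherical one away from the punctures then has geometry uniformly comparable to the fixed punctured surface $\overline{\mathbb C}\setminus X(g_0)$. In bounded geometry a homotopy class of bounded complexity (rel endpoints and punctures) has a representative of bounded length; transporting this representative back yields $M_2=M_2(M_0,W_0,g_0)$, and combined with the previous paragraph this proves the corollary.

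The step I expect to be the main obstacle is this last reconciliation: topological rigidity of $(\overline{\mathbb C},G(g),X(g))$ is witnessed by a path in $V_1$, which may be long and run near $\partial V_1$, whereas the metric tameness of $X(g)$ comes from the bounded path in $V_0$; one must verify that ``crossing at most $M_1$ edges of $G(g)$'' really forces bounded complexity measured against the \emph{metrically tame} model of $X(g)$, and not some wildly twisted model. This is precisely where the estimates in the proof of Lemma~\ref{2.5} --- which already tie the homotopy combinatorics of $G(g)$ to spherical lengths through the bounded displacement of $X(g)$ from $X(g_0)$ --- would be invoked again.
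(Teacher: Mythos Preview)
Your proposal is correct and follows essentially the same route as the paper: write $e'=\overline{e_1'}*e_2'$ for two initial segments of the edge, extend each by a short arc to a point of $X(g)$, apply Lemma~\ref{2.5} to bound the homotopy length by $M_1$, and then convert bounded homotopy length into bounded spherical length of a representative via the bounded displacement of $X(g)$ from $X(g_0)$. The paper dispatches this last conversion in a single sentence (``Because the movement of $X(g_0)$ to $X(g)$ is bounded, this means that $\alpha$ is homotopic \dots\ to a path of spherical length $\le M_2'$''), whereas you spell out more carefully why bounded edge-crossing number in $G(g)$ forces a short representative rel $X(g)$; the reconciliation concern you flag at the end is exactly the step the paper handles only implicitly, by the same device (``the homotopy length is the same'') already invoked in the proof of Lemma~\ref{2.5}.
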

\noindent{\em{Proof.}} It suffices to prove this for paths with one endpoint at $X(g)$, because $e'=\overline{e_1'}*e_2'$ for two such paths in the same edge as $e'$. So now assume that $e'$ shares an endpoint with $e$.  Then by \ref{2.5}, we can extend $e'$ by spherical length $\le M_0$  to a path $\alpha $ with both endpoints in $X(g)$  so that $\alpha $ is homotopic, via  a homotopy fixing $X(g)$, to an arbitrarily small neighbourhood of a path crossing $\le M_1$ edges of $G(g)$. Because the movement of $X(g_0)$ to $X(g)$ is bounded, this means that $\alpha $ is homotopic, via a homotopy fixing $X(g)$, to a path of spherical  length $\le M_2'$. Then since $e'$ can be obtained from $\alpha $ by adding length $M_0$, we obtain the required bound on $e'$ with $M_2=M_2'+M_0$.\ep

\begin{lemma}\label{2.7}  Let $V$, $V_1$, $g_0$, $M_0$, $W_0$ and $g$ be as in \ref{2.5}. There is $\varepsilon >0$ depending on $M_0$ and $g_0$ such that  for each $i$, there is some point in  $P_i(g)$ which is distance  $\ge \varepsilon $ from $\partial P_i(g)$.\end{lemma}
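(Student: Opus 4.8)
The plan is to argue by contradiction, using the bounds on homotopy length already established to normalise the configuration. Suppose the conclusion fails. Then there are maps $g_n\in V_1$, each joined to $g_0$ by a path in $V_0$ as in \ref{2.5}, and, passing to a subsequence (there being only finitely many indices), a fixed index $i$ such that $P_i(g_n)$ contains no spherical disc of radius $1/n$. Enlarging $W_0$ if necessary so that it contains all the $g_n$ together with a suitable compact family of connecting paths, we may also assume $g_n\to g_\infty\in W_0$. I shall derive a contradiction by showing that, on the contrary, the inradius of $P_i(g_n)$ is bounded below in terms of $M_0$, $g_0$ and $W_0$.

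First I would record the consequences of \ref{2.5} and \ref{2.6}. By \ref{2.6} every path in an edge of $G(g_n)$ is homotopic, rel its endpoints and rel $X(g_n)$, to a path of spherical length $\le M_2=M_2(M_0,g_0,W_0)$. Since $G(g_n)$ has a fixed number of edges and vertices and the isotopy carrying $X(g_n)$ onto $X(g_0)$ is bounded in terms of $M_0$, $g_0$, $W_0$ (as in the proof of \ref{2.5}), straightening one edge at a time --- homotopy length being an isotopy invariant, so that \ref{2.6} still applies after each straightening --- produces a homeomorphism $\theta_n$ of $\overline{\mathbb C}$ with $\theta_n$ and $\theta_n^{-1}$ Lipschitz with constant bounded in terms of $M_0$, $g_0$, $W_0$, with $\theta_n(X(g_n))=X(g_0)$ and with every edge of $\theta_n(G(g_n))$ of spherical length $\le M_3=M_3(M_0,g_0,W_0)$. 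The heart of the proof is to upgrade this metric normalisation to a conformal one. Using the real-analytic boundary maps $h_{1,g_n}$, $h_{2,g_n}$ of $g_n$ and arguing as in \ref{2.11}, \ref{2.3} and \ref{2.4} --- with the expansion of $g_n^N$ on $G(g_n)$ from \ref{2.0}, together with the above bounds, serving to bound the moduli occurring in \ref{2.11} and the relevant Blaschke products, hence the quasisymmetric norm of the conjugacy in \ref{2.3}, hence the quasiconformal norms in the surgery of \ref{2.4} --- one obtains quasiconformal homeomorphisms $\chi_n$ of $\overline{\mathbb C}$ of uniformly bounded dilatation, and a fixed model graph $\Gamma$, with $\chi_n(\Gamma)=G(g_n)$. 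Passing to a further subsequence, $\chi_n\to\chi$ uniformly with $\chi$ quasiconformal, so $G(g_n)=\chi_n(\Gamma)\to\chi(\Gamma)$ in the Hausdorff topology, $\chi(\Gamma)$ is again a graph, and its complementary Jordan domains, in particular $\chi(P_i)$, are nondegenerate. Then $P_i(g_n)=\chi_n(P_i)\to\chi(P_i)$, and since $\chi(P_i)$ contains a spherical disc of radius $2\varepsilon_0>0$, the domain $P_i(g_n)$ contains a spherical disc of radius $\varepsilon_0$ for all large $n$ --- contradicting the choice of the $g_n$. This gives the lemma, with $\varepsilon$ any positive number below the $\varepsilon_0$ so produced.

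The main obstacle is exactly this conformal upgrading: converting the combinatorial/metric bound of \ref{2.6} into a bound on the conformal moduli of \ref{2.11} (equivalently, on the quasisymmetric norm in \ref{2.3}), so that the normalising maps of \ref{2.4} are uniformly quasiconformal and the limit configuration does not degenerate. This is where the expansion property of \ref{2.0} must enter, since bounded edge lengths by themselves do not prevent two edges of $G(g_n)$ from pinching a complementary piece. Granting the normalisation, the endgame can also be made elementary: once $\theta_n(P_i(g_n))$ is known not to degenerate, one bounds its spherical area below by some $A_0>0$ and its perimeter above by $EM_3$ ($E$ being the fixed number of edges of $G$), and uses that a Jordan domain $D$ lies in the $\rho$-neighbourhood of $\partial D$, $\rho$ its inradius, whence $\mathrm{area}(D)\le C\rho\,EM_3+C\rho^2$ for an absolute constant $C$; then $A_0\le C\rho\,EM_3+C\rho^2$ forces $\rho$ bounded below, and one transports an inscribed disc back to $P_i(g_n)$, of comparable radius, by the Lipschitz map $\theta_n^{-1}$.
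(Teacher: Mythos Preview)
Your proposal has a structural circularity. You plan to invoke Lemmas~\ref{2.11}, \ref{2.3} and~\ref{2.4} to produce uniformly quasiconformal normalising maps $\chi_n$, but the hypotheses of \ref{2.11} ask for bounds on the conformal moduli of $(\bigcup_{i\in I}P_i(g),\,g^{-N}(X(g))\cap\partial(\bigcup_{i\in I}P_i(g)))$, and in the paper those bounds are supplied by Lemmas~\ref{2.8} and~\ref{2.9}, which in turn rest on the very statement you are trying to prove (the upper modulus bound in \ref{2.8} explicitly cites \ref{2.7}). You acknowledge this as ``the main obstacle'' and gesture at \ref{2.0}, but \ref{2.0} only yields a uniform expansion constant \emph{conditionally} on a lower modulus bound for the annular pieces of $U\setminus U_1$, which is again a conformal estimate you have not secured. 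So the loop does not close. The alternative ``elementary'' endgame has the same problem in disguise: the bi-Lipschitz straightening $\theta_n$ cannot be built from bounded edge length alone, precisely because two edges of $G(g_n)$ might run $1/n$-close to each other --- which is exactly the degeneration you are trying to rule out --- and in that situation no edge-by-edge straightening has controlled Lipschitz constant. Likewise the area lower bound $A_0$ for $\theta_n(P_i(g_n))$ is asserted (``once \dots\ is known not to degenerate'') rather than derived.

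The paper's argument avoids all of this by never attempting a global normalisation. It is direct, dynamical, and uses only \ref{2.5}--\ref{2.6} plus the elementary fact that the spherical derivative of $g^n$ is bounded on the compact $W_0$. One picks a specific point $x\in P_i(g)\cap g^{-n}(X(g))$ for a suitable fixed $n$, chosen (via the combinatorics of $g_0$ on $G(g_0)$ and the bounded isotopy between $X(g)$ and $X(g_0)$) so that every path $\alpha'$ from $x$ to $\partial P_i(g)\cap X(g)$ has $g^n\alpha'$ of homotopy length $>M_3$, with $M_3$ large enough to force spherical length $>2M_2$. For an arbitrary path $\alpha$ from $x$ to $\partial P_i(g)$, one extends $g^n\alpha$ along $\partial P_j(g)$ to a vertex; by \ref{2.6} the extension is homotopic rel endpoints to something of length $\le M_2$, so $g^n\alpha$ itself must have spherical length $>M_2$. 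Since $n$ is fixed and the derivative of $g^n$ is bounded on $W_0$, this forces a positive lower bound on the length of $\alpha$, hence on $d(x,\partial P_i(g))$. The point is that homotopy length controls spherical length \emph{from below} (long homotopy length implies long spherical length, because short paths have bounded homotopy length by the argument in \ref{2.5}), and pushing forward by $g^n$ amplifies homotopy length while only boundedly distorting spherical length.
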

\noindent{\em{Proof.}} It suffices, for some $x\in P_i(g)$ and for some fixed $n$, to find a lower bound on the length of $g^n\alpha $, where $\alpha $ is any path from $x$ to $\partial P_i(g)$. By \ref{2.6}, we can extend $g^n\alpha $ by a path $\gamma $ in some  $\partial P_j(g)\cap g^n(\partial P_i(g))$ to a point of $X(g)$, such that $\gamma $ is homotopic, via a homotopy fixing endpoints  and $X(g)$, to a path of length $\le M_2$, which is independent of $n$. But we can choose $x\in g^{-n}(X(g))$, for some $n$, so that if $\alpha '$ is any path from $x$ to $\partial P_i(g)\cap g^{-n}(X(g))$ then the homotopy length of $g^n\alpha '$ is $>M_3$, where $M_3$ is sufficiently long to force spherical length $>2M_2$. We do this using  the bound on the isotopy distance between $X(g)$ and $X(g_0)$, and the number of sets of ${\mathcal{P}}(g)$ that $g^n(\alpha ')$ must cross. Then the spherical length of $g^n\alpha $ is $>M_2$, which gives us  a strictly positive lower bound on the spherical length of $\alpha $: in terms of $n$, which means, ultimately, in terms of $M_0$. \ep

In a similar way, we can prove the following.

\begin{lemma}\label{2.8}  Let $V$, $V_1$, $g_0$, $M_0$ , $W_0$ and $g$ be as in \ref{2.5}.  Let $A$ be any embedded annulus which is a union of $N_1\ge 1$ components of sets $g^{-r}(P_i(g))$ (for varying $i$) surrounding a union of $N_2\ge 1$ components of sets $g^{-r}(P_j(g))$ (for varying $j$). Then the modulus of $A$ is bounded and bounded from $0$, where the bounds depend on $N_1$, $N_2$, $M_0$, $g_0$ and $r$.\end{lemma}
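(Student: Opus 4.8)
The plan is to follow the proof of Lemma~\ref{2.7}. The modulus of $A$ is a conformal invariant, which I would bound above and below, in terms of $N_1$, $N_2$, $r$, $M_0$, $W_0$ and $g_0$, by transporting definite geometric features across scales with a bounded iterate of $g$, using Lemmas~\ref{2.5}--\ref{2.7}. The one ingredient needed at the outset is a scale-$r$ version of Lemma~\ref{2.7}: for fixed $r$ there is $\varepsilon _r>0$ (depending on $r$, $M_0$, $W_0$, $g_0$) such that every component $Q$ of $g^{-r}(P_i(g))$ contains a spherical disc of radius $\varepsilon _r$, and hence has spherical area $\ge \pi \varepsilon _r^2$ and a point at distance $\ge \varepsilon _r$ from $\partial Q$. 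This holds because $g^r$ maps $Q$ onto $P_i(g)$: picking a spherical disc $B(x,\varepsilon )\subset \mbox{int}(P_i(g))$ as in Lemma~\ref{2.7}, shrunk and recentred so as to avoid the finitely many critical values of $g^r$ lying in $P_i(g)$, the univalent branch of $g^{-r}$ over $B(x,\varepsilon )$ which maps into $Q$ has spherical derivative $\ge C^{-r}$ there, where $C$ bounds the spherical derivative of $g$ over the compact set $W_0$; so by the Koebe distortion theorem its image contains a spherical disc of radius a fixed multiple of $C^{-r}\varepsilon $.

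For the upper bound, $A$ separates $\overline{\mathbb C}$ into a closed inside $E_0$, containing the $N_2$ surrounded pieces, and a closed outside $E_1$; both $\mbox{int}(E_0)$ and $\mbox{int}(E_1)$ are nonempty unions of components of $\overline{\mathbb C}\setminus g^{-r}(G(g))$, so each of $E_0$, $E_1$ contains a full component of some $g^{-r}(P_i(g))$ and hence, by the previous step, has spherical diameter $\ge 2\varepsilon _r$. Now any loop separating two continua of spherical diameter $\ge 2\varepsilon _r$ has spherical length $\ge 2\varepsilon _r$, since it lies in a spherical ball of radius half its length and one complementary component of that ball then contains an entire $E_j$; so the length--area inequality with the spherical metric bounds the extremal length of the family of separating loops below by $\varepsilon _r^2/\pi$, and $\mbox{mod}(A)$, its reciprocal, above by $\pi /\varepsilon _r^2$.

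For the lower bound it is enough to show that the spherical distance between the two boundary curves of $A$ is $\ge \delta _0>0$, with $\delta _0$ depending only on $N_1$, $r$, $M_0$, $W_0$, $g_0$: the length--area inequality with the spherical metric, applied to the family of curves in $A$ joining the two boundary curves, then gives $\mbox{mod}(A)\ge \delta _0^2/(4\pi )$. I would prove this distance bound in the manner of Lemma~\ref{2.7}. A curve crossing $A$ from one boundary curve to the other must cross each of the at most $N_1$ pieces $g^{-r}(P_i(g))$ that it meets; pushing such a crossing forward by a suitable iterate $g^n$, chosen so that the relevant boundary arcs of those pieces become unions of edges of $G(g)$ (using that each $g^{-r}(P_i(g))$ is, like $P_i(g)$, a quasidisc with constant controlled by $r$, so that a crossing cannot be short through a pinch of a piece), Lemmas~\ref{2.5} and~\ref{2.6} bound the homotopy length of the image; and the configuration can be arranged --- exactly as in Lemma~\ref{2.7}, where one forces large spherical length --- so as to force large homotopy length, hence large spherical length, of the crossing before $g^n$ is applied.

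The main obstacle is this last step: converting the combinatorial and homotopy-length control of Lemmas~\ref{2.5}--\ref{2.7} into a genuine lower bound on a spherical distance, equivalently on a conformal modulus. Everything else is bookkeeping --- notably keeping track of the distortion of $g^r$ on $W_0$, which is exponential in $r$ but harmless since $r$ is fixed, and of the critical values of $g^r$ that must be avoided for the Koebe estimates --- together with the standard extremal-length inequalities used above.
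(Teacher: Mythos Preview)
Your proposal follows essentially the same strategy as the paper's proof: the upper bound comes from a diameter lower bound on each of the two complementary components of $A$, and the lower bound comes from a uniform lower bound on the spherical length of any path crossing $A$, obtained by pushing forward under an iterate $g^n$ exactly as in Lemma~\ref{2.7}.

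The one genuine difference is in how the case $r>0$ is handled. The paper dispatches it in one line, reducing to $r=0$ by observing that modulus bounds survive under branched covers with constants depending only on $r$ and the degree; you instead work directly at level $r$ by proving a level-$r$ analogue of Lemma~\ref{2.7} via a Koebe-distortion argument on univalent branches of $g^{-r}$. Both are valid. The paper's route is shorter; yours makes the constants more explicit but requires some care with the critical values of $g^r$ (which you note).

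Your sketch of the lower bound is somewhat muddled, however. The quasidisc remark is irrelevant here, and invoking Lemmas~\ref{2.5}--\ref{2.6} to ``bound the homotopy length of the image'' points in the wrong direction: those lemmas give \emph{upper} bounds. The paper's argument, which is what you are reaching for, is this: extend the crossing path $\gamma$ along $\partial A$ to a path $\gamma'$ with endpoints in $X(g)$ lying on the two distinct boundary components, using Corollary~\ref{2.6} to control the added length; the endpoints lie in distinct vertex-components of $G(g)$, so $\gamma'$ has a definite nontrivial homotopy class relative to $X(g)$, and the homotopy length of $g^n(\gamma')$ tends to infinity with $n$ (by the bounded isotopy distance between $X(g)$ and $X(g_0)$). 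Fixing $n$ large enough forces the spherical length of $g^n(\gamma')$, and hence of $\gamma'$ and $\gamma$, to exceed a definite $\delta_0$. Your ``main obstacle'' is thus resolved exactly as in Lemma~\ref{2.7}, with no new idea required.
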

\noindent{\em{Proof.}} It suffices to prove this with $r=0$, since the result remains true under branched covers, just depending on $r$ and the degree of $g_0$.  The upper bound on modulus is clear, from the bound on the diameter of the sets $P_i(g)$ from \ref{2.5} and on the lower bound on the interior of sets $P_j(g)$ in \ref{2.7}. Actually a lower bound on the diameter of the sets $P_j(g)$ is enough, and this is easily obtained. So now we need to bound the modulus below. For this, we need to bound below the length (in the spherical metric) of any path $\gamma $ between the two boundary components of $A$. As in \ref{2.7}, it suffices to bound below the length of $g^n(\gamma )$, for some fixed $n$, and it suffices to show that this length tends to $\infty $ with $n$. As in \ref{2.7}, it suffices to prove this for paths with endpoints in $X(g)$, in distinct components of $\partial A$, and this length tends to $\infty $ because of the bounded homotopy distance of points in $X(g)$ from $X(g_0)$, and the homotopy length tends to $\infty $. \ep

Then using this, we can prove the following. 
\begin{lemma}\label{2.9} Let $V$, $V_1$, $g_0$, $M_0$, $W_0$ and $g$ be as in \ref{2.5}.  The moduli of 
$\left(\bigcup _{i\in I}P_i(g),g^{-t}(X(g))\cap \partial \left( \bigcup _{i\in I}P_i(g)\right)\right)$
 are bounded whenever $\bigcup _{i\in I}P_i(g)$ is a topological disc, with bound depending only on $M_0$, $g_0$, $t$ and $\#(I)$.  \end{lemma}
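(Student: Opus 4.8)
The plan is to convert the conclusion into a lower bound for the spherical distance, measured inside $Q:=\bigcup_{i\in I}P_i(g)$, between pairs of boundary arcs of $Q$, and then to establish that lower bound by the device already used in Lemmas \ref{2.7} and \ref{2.8}: push the relevant path forward by $g^n$ and exploit the blow-up of its homotopy length.

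First I would set up the reduction. Put $B=g^{-N}(X(g))\cap\partial Q$. Since $G(g)\subset g^{-1}(G(g))$ and $G(g)$ varies isotopically on $V_1$, the combinatorics of $G(g)$, hence of $g^{-N}(G(g))$ and of $B$, is the fixed one determined by $g_0$; in particular $\#B$ is bounded in terms of $N$ and $g_0$, so it suffices to bound, for every four points of $B$ taken in cyclic order along $\partial Q$, the cross-ratio of their images under a uniformisation $\varphi$ of $Q$ onto the closed unit disc away from $0$, $1$ and $\infty$. Such a quadruple divides $\partial Q$ into four arcs; writing $\{E,F\}$ and $\{E',F'\}$ for the two opposite pairs, the cross-ratio is controlled precisely when the extremal lengths $\lambda_Q(E,F)$ and $\lambda_Q(E',F')$ of the families of curves joining these arcs in $Q$ are both bounded above, and since $\lambda_Q(E,F)\,\lambda_Q(E',F')=1$ this is the same as both being bounded below. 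Testing with the spherical metric gives $\lambda_Q(E,F)\ge \mbox{dist}_Q(E,F)^2/(4\pi)$, where $\mbox{dist}_Q$ is spherical distance within $Q$, so the lemma reduces to: for any two opposite boundary arcs $E$, $F$ of $Q$ with endpoints in $B$, every path in $Q$ joining $E$ to $F$ has spherical length bounded below, by a constant depending only on $N$, $M_0$, $W_0$ and $g_0$.

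To prove this, let $\gamma$ be such a path. As in the proof of Lemma \ref{2.7}, it suffices, for a fixed $n$ still to be chosen, to bound below the spherical length of $g^n(\gamma)$ — because $\gamma=g^{-n}(g^n\gamma)$ and hence $\mbox{length}(\gamma)\ge\mbox{length}(g^n\gamma)/\sup_{W_0}\|(g^n)'\|$, the supremum being finite — and for this it is enough to force the homotopy length of $g^n(\gamma)$ past any prescribed threshold by taking $n$ large. For $n\ge N$ the endpoints of $g^n(\gamma)$ lie in $g^{n-N}(X(g))\subset X(g)$, since $X(g)$ is a forward invariant set, so $g^n(\gamma)$ is a path between vertices of $G(g)$ joining $g^n(E)$ to $g^n(F)$ inside $g^n(Q)$. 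Now $g$ is expanding on $G(g)$, so each edge of $G(g)$ contained in one of the two arcs of $\partial Q$ separating $E$ from $F$ is carried by $g^n$ onto a union of many edges of $G(g)$, the number growing with $n$ at a rate fixed by the combinatorially rigid dynamics of $g_0$ on $G(g_0)$; thus $g^n(E)$ and $g^n(F)$ become separated, along $\partial(g^n(Q))$ and within $g^n(Q)$, by combinatorially arbitrarily long stretches of $G(g)$, which forces the homotopy length of $g^n(\gamma)$ to tend to infinity with $n$. Finally, by the boundedness of the isotopy carrying $X(g)$ to $X(g_0)$, which is controlled in terms of $W_0$ and $g_0$ as in Lemma \ref{2.5}, the homotopy length of a path between points of $X(g)$ differs from its spherical length by at most a bounded factor, so $\mbox{length}(g^n\gamma)\to\infty$ too, completing the argument.

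The step I expect to be the main obstacle is precisely the assertion that the homotopy length of $g^n(\gamma)$ blows up — equivalently, that a path cannot join two opposite boundary arcs of the disc $Q$ while staying homotopically, hence spherically, short. Two points need care. First, the arcs of $\partial Q$ separating $E$ from $F$ may be as short combinatorially as a single $g^{-N}$-subedge, so the separation of $g^n(E)$ from $g^n(F)$ is produced entirely by iterating: it is the expansion of $g$ on $G(g)$ that makes a combinatorially tiny arc large under $g^n$, exactly as in Lemma \ref{2.8}. Second, $\gamma$ may wander into Fatou components on which $g^n$ is far from expanding, so the estimate cannot be made pointwise along $\gamma$; it must be read off from the homotopy class of $g^n(\gamma)$ — which is legitimate because $\gamma$ has both endpoints on $G(g)$ and because the homotopy-length bounds of Lemmas \ref{2.5}--\ref{2.6} apply to the relevant edge-pieces of $G(g)$. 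Making this rigorous comes down to a purely combinatorial statement about the fixed Markov map $g_0:G(g_0)\to G(g_0)$: that the number of edges crossed by a path traversing a given disc from one boundary arc to an opposite one grows without bound as one pushes the path forward under the dynamics — a transitivity and expansion property of that Markov system, of the same flavour as the one already used in \ref{1.5} and \ref{1.10}.
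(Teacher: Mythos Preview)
Your strategy is essentially correct and tracks the paper's proofs of Lemmas \ref{2.7}--\ref{2.8}, but the paper's own argument here is more economical: rather than re-run the push-forward argument inside $Q$, it reduces directly to Lemma \ref{2.8}. After the same reduction to a lower bound on the modulus of a quadrilateral $(Q;x_1,x_2,x_3,x_4)$, the paper first uses monotonicity of modulus under inclusion of annuli to reduce to the extreme case in which $\{x_1,x_2\}$ and $\{x_3,x_4\}$ are pairs of \emph{adjacent} points of $g^{-N}(X(g))$ on $\partial Q$. For such a quadrilateral one then forms an embedded annulus in $\overline{\mathbb C}$ that contains $Q$ and encloses the partition elements $P_j(g)$ lying along one of the two separating arcs of $\partial Q$; this annulus is a finite union of components of sets $g^{-r}(P_i(g))$, so Lemma \ref{2.8} applies to it and its modulus bounds the quadrilateral modulus from below. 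All the homotopy-length work is thus already packaged in \ref{2.8}.

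One correction to your write-up: the endpoints of $\gamma$ are arbitrary points of the arcs $E$ and $F$, not points of $B=g^{-N}(X(g))\cap\partial Q$, so the claim that $g^n(\gamma)$ has endpoints in $X(g)$ for $n\ge N$ is false as stated. You must first slide each endpoint along its arc to an endpoint of $E$ or $F$ (which does lie in $B$), or, as in the proof of \ref{2.7}, extend along $G(g)$ to a nearby vertex via Corollary \ref{2.6}; either way the change in length is controlled. Your discussion of the main obstacle --- why the homotopy length of $g^n(\gamma)$ blows up --- is accurate in spirit, but the phrase ``separated \ldots\ within $g^n(Q)$'' is not quite right, since for large $n$ the map $g^n$ is far from injective on $Q$ and $g^n(Q)$ is not an embedded disc. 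What actually does the work is that $\gamma$, once its endpoints are pinned to $B$, is homotopic in $Q$ rel those endpoints to one of the separating boundary arcs, and the image of that arc under $g^n$ is an edge-path in $G(g)$ whose homotopy length rel $X(g)$ grows with $n$. This is exactly the content of the (equally terse) final sentence of the paper's proof of \ref{2.8}, so your approach is really a reproof of \ref{2.8} in situ rather than an invocation of it.
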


\noindent{\em{Proof.}} Write $Q=\bigcup _{i\in I}P_i(g)$, for any fixed $I$ such that $Q$ is a topological disc. If $(x_1,x_2,x_3,x_4)$ is an ordered quadruple of four points of $\partial Q\cap g^{-t}(X(g))$, with $x_1$and  $x_2$ not separated in $\partial Q$ by the set $\{ x_3,x_4\} $, then we define the modulus of $(x_1,x_2,x_3,x_4)$ to be the modulus of the rectangle $\varphi (Q)$ where $\varphi $ is conformal on the interior and the vertices are the points $\varphi (x_i)$. In turn, we define modulus to be the modulus of the annulus formed by identifying the edge of the rectangle joining $\varphi (x_1)$ and $\varphi (x_2)$ to the edge joining $\varphi (x_3)$ and $\varphi (x_4)$.  So it suffices to bound below the modulus of each such quadruple $(x_1,x_2,x_3,x_4)$. But then it suffices to do it in the case when $x_1$ and $x_2$ come from adjacent points of $g^{-t}(X(g))$ on $\partial Q$, and similarly for $x_3$ and $x_4$, because ${\rm{modulus}}(A_1)\le {\rm{modulus}}(A_2)$ if $A_1\subset A_2$ and the inclusion is injective on $\pi _1$. But if we have two disjoint edges on $\partial Q$, we can make an annulus which includes $Q$ and encloses a union of partition elements $P_j(g)$. The partition elements $P_j(g)$ are those with edges on one path in $\partial Q$ between the edges associated with $(x_1,x_2)$ and $(x_3,x_4)$.  So the lower bound on the modulus of $(x_1,x_2,x_3,x_4)$ comes from the lower bound of this annulus, which was obtained in \ref{2.8}.  \ep

\subsection{Proof of Theorem \ref{2.2}}\label{2.10}

We recall that we are making the assumption that $Y(g_n)$ is combinatorially bounded from $G(g_n)$. We need to check that the assumptions of Lemma \ref{2.4} are satisfied, since Theorem \ref{2.2} will then immediately follow. Lemma \ref{2.9} gives the bounds on the moduli of  
$$\left(\bigcup _{i\in I}P_i(g_n),g_{n}^{-t}(X(g_n))\cap \partial \left(\bigcup _{i\in I}P_i(g_n)\right)\right),$$
for any particular $t$. By \ref{2.8}, the set $Y(g_n)$ is bounded from $G(g_n)$ by a union of annuli of moduli bounded from $0$. Together with the bound on the moduli of $(P_i(g_n),X(g_n)\cap \partial P_i(g_n))$, which is just used for normalisation, this gives the required bound on the Blaschke products $\varphi _{i,g_n}\circ g_n\circ \psi _{i,g_n}^{-1}$ of \ref{2.4}, and the proof is completed.

\section{Parametrisation of existence set of Markov partition}\label{3}

\subsection{}\label{3.0}

In Section \ref{2}, the parameter space $V$ was a connected component of an affine variety over $\mathbb C$. In this section, we put more restrictions on $V$. In particular, the restrictions include that $V$ is of complex dimension one. This means that we are looking at a familiar scenario, in which it is reasonable to suppose that parameter space can be described by movement of a single critical value. It is certainly possible that the ideas generalise to higher dimensions. But there are still new features to consider, even for $V$ of complex dimension one.

We restrict to the case of  $V$ being a parameter space of quadratic rational maps $g$ with numbered critical points for which one critical point $c_1(g)$ is periodic of some fixed period and the other, $c_2(g)$, is free to vary. The family of such maps, quotiented  by M\"obius conjugation, is of complex dimension one, and is well known to have no finite  singular points. (See, for example,  Theorem 2.5 of \cite{K-R}.) So  $V$, or a natural quotient of it, is a Riemann surface, with some punctures at $\infty $, where the degree of the map degenerates. So we assume from now on that $V$ is a Riemann surface. We write $v_1(g)=g(c_1(g))$ and $v_2(g)=g(c_2(g))$ for the critical values. Fix  a postcritically finite map $g_0\in V$ for which a connected finite  graph $G(g_0)$ exists with $G(g_0)\subset g_0^{-1}(G(g_0))$ and such that the conditions of Theorem \ref{2.1} hold.   Write 
$${\cal{P}}={\cal{P}}(g_0)=\{ \overline{U}:U\mbox{ is a component of }\overline{\mathbb C}\setminus G(g_0)\} .$$
We write $V(G(g_0),g_0)$ for  the  connected set $V_1\subset V$ containing $g_0$ defined in \ref{2.2}, with $g_0$ replacing $f$.
We write $V([G(g_0)])$ for the subset of $V$, which is the union of sets $V(G(g_1),g_1)$ for which 
$$G(g_0)\cup \{ g_0^i(v_1(g_0)):i\ge 0\} \cup \{ v_2(g_0)\} $$
and
$$G(g_1)\cup \{ g_1^i(v_1(g_1)):i\ge 0\} \cup \{ v_2(g_1)\} $$
are isotopic. Thus, $V(G(g_0),g_0)$ is a component of $V([G(g_0)])$, which could, potentially, have more than one component.

 In Section \ref{2} we found a partial characterisation of the boundary of $V(G(g_0),g_0)$.  Now we want to try and obtain a parametrisation of this set. For any $g\in V([G(g_0)])$, and integer $n\ge 0$, we define
$$G_n(g)=g^{-n}(G(g)).$$
 We continue with the notations  ${\mathcal{P}}(g)$ and ${\mathcal{P}}_n(g)$ established at the end of \ref{2.1}. Thus, $G_n(g)$ is the union of boundaries of the sets of ${\mathcal{P}}_n(g)$.

\subsection{The possible graphs}\label{3.1}

Let $g_0\in V$ and $G(g_0)$ be as above. Following a common strategy, we want to use the dynamical plane of  $g_0$ to investigate the variation of dynamics in  $V(G(g_0),g_0)$. The set 
$$G(g)\cup \{g^i(v_1(g)):i\ge 0\} \cup\{v_2(g)\} $$  varies isotopically   for $g\in V(G(g_0),g_0)$. In fact $(G(g),g)$ varies continuously as a dynamical system, because, by \ref{2.1} and \ref{2.2}, backward orbits of vertices of $G(g)$ are dense in $G(g)$. Also,
$$G_1(g)\cup \{ g^i(v_1(g):i\ge 0\} $$
varies isotopically with $g\in V(G(g_0),g_0)$.  But, because $v_2(g)$ is not included in this isotopically varying set, it is not true that $G_n(g)$ varies isotopically for $n>1$. But nevertheless, it is possible to determine inductively all the possible graphs $G_n(g)$ up to  isotopy, for $g\in V(G(g_0),g_0)$. The different possibilities for $G_n(g)$, up to  isotopy, are determined from the different possibilities for $G_{n-1}(g)\cup \{ v_1(g),v_2(g)\} $ up to isotopy. Inductively, this means that the different possibilities for $G_n(g)$ (and ${\cal{P}}_n(g)$), up to  isotopy, are determined by $(Q_i(g):0\le i\le n-1)$, where:
\begin{itemize}
\item $Q_0=Q_0(g)$ is the set in ${\mathcal{P}}(g)$ with $v_2(g)\in \mbox{int}(Q_0)$;
\item  $Q_{i+1}(g)\subset Q_{i}(g)$ for $0\le i\le n-1$;
\item $Q_i(g)\in {\mathcal{P}}_i(g)$ or $Q_i(g)$ is an edge of $G_i(g)$ or a vertex of $G_i(g)$;

\item $v_2(g)\in Q_i(g)$ for $i\le n-1$ and $v_2(g)\in \mbox{int}(Q_i(g)$ if $Q_i\in {\mathcal{P}}_i(g)$, and $v_2(g)$ is not an endpoint of $Q_i(g)$ if $Q_i(g)$ is an edge of $G_i(g)$.\end{itemize}

 Inductively, this means that the different possibilities for $Q_n(g)$  are determined by $Q_i(g)$, for $0\le i\le n-1$, and hence so is the graph $G_{n}(g)$, up to homeomorphism of $\overline{\mathbb C}$, and the dynamical system $(G_n(g),g)$, up to isomorphism. So the different possibilities for any sequence $(Q_i:0\le i\le n-1)$ as above, or even any infinite sequence $(Q_i:i\ge 0)$ with these properties, are determined by $g_0:G_1(g_0)\to G(g_0)$, up to homeomorphism of $\overline{\mathbb C}$ which is the identity on $\partial Q_0$. We will write ${\mathcal{Q}}={\mathcal{Q}}([g_0])$ for the set of sequences, either finite or infinite, up to equivalence, where two sequences $(Q_i:i\ge 0)$ and $(Q_i':i\ge 0)$  are regarded as equivalent if there is a homeomorphism $\varphi $ of $\overline{\mathbb C}$ which  maps $Q_i$ to $Q_i'$ for all $i\ge 0$. We will write ${\cal{Q}}_\infty $ for the set of infinite sequences in ${\cal{Q}}$, and ${\cal{Q}}_n$ for the set of finite sequences $(Q_{0},\cdots Q_{n})$ in ${\cal{Q}}$.  For $\underline{Q}=(Q_0,\cdots Q_{n-1})\in {\cal{Q}}$, we write $V(\underline{Q},g_0)$ or $V(\underline{Q},[g_0])$ for the set of $g\in V(G(g_0),g_0)$ or $V([G(g_0)])$ such that $(Q_i(g):0\le i\le n-1)$ is equivalent to $(Q_0,\cdots Q_{n-1})$. For $g\in V(\underline{Q},g_0)$, the graphs $G_n(g)$ are all isotopic, and and for $g\in V(\underline{Q},[g_0])$, the dynamical systems  $(G_n(g),g)$,  are isomorphic. We write $G(\underline{Q})$ for this graph and ${\mathcal{P}}(\underline{Q})$  for the corresponding partition, where, because of the isomorphism of the dynamical systems, there is a canonical homeomorphism between $G(\underline{Q})$ and $G_n(g)$ for any $g\in V(\underline{Q},[g_0])$ which varies continuously with $g$, and therefore induces an isotopy of $G_n(g)$.  This  homeomorphism  is actually a bit more general, which will be important later.  Let $Q_{n-1}'\subset Q_{n-1}$ be an edge  or point  of $G(\underline{Q})$ (not necessarily a vertex). Then $g^{-1}(G_{n-1}(g)\setminus Q_{n-1}'(g))$ are all canonically homeomorphic, with homeomorphism varying continuously  for $g\in V(Q_0,\cdots Q_{n-1},[g_0])\cup V(Q_0,\cdots Q_{n-2},Q_{n-1}',[g_0])$. 
 
 For $g\in V([G(g_0)])$, we also define

$${\cal{P}}_\infty (g)= \bigcap _{n=0}^\infty \{ Q_n(g):Q_n(g)\subset Q_{n-1}(g),\ Q_n(g)\in {\cal{P}}_n(g)\mbox{ for all }n\ge 0\} .$$

Then ${\cal{P}}_\infty (g)$ is a collection of closed sets whose union is the whole sphere. If $v_2(g)$ is {\em{not persistently recurrent}} then all the sets in ${\cal{P}}_\infty (g)$ are either points or closures of Fatou components for $g$.  This follows from \cite{Roesch}. 

For any $\underline{Q}=(Q_i:i\ge 0)\in {\cal{Q}}_\infty $, we also define
$$V(\underline{Q},g_0)=\bigcap _{n=1}^\infty (V(Q_0,\cdots Q_n,g_0)\cup V(Q_0,\cdots Q_{n-1},\partial Q_n,g_0)),$$
where $V(Q_0,\cdots Q_{n-1},\partial Q_n,g_0)$ is the union of all those 
$V(Q_0,\cdots Q_{n-1},Q',g_0)$ such that $Q'\subset Q_n$ and $Q'$ is an edge or vertex of $G(Q_0,\cdots Q_{n-1})$ which is not a vertex of $G(Q_0,\cdots Q_{n-2})$. For each $n$, we have
$$V(G(g_0),g_0)=\bigcup _{\underline{Q}\in {\cal{Q}}_{n}}V(\underline{Q})=\bigcup _{\underline{Q}\in {\cal{Q}}_\infty }V(\underline{Q}).$$
Similar definitions and statements hold for $V([G(g_0)])$.

We now have the notation in place to state the main theorem of this section. THe hy[ptheses imply the hypotheses of \ref{3.1}. A branched covering $f$ of $\overline{\mathbb C}$ is said to be {\em{postcritically finite}} if the postcritical set $Z(f)=\{ f^n(c):c\mbox{ critical },n>0\} $  is finite.

\begin{theorem}\label{3.2} Let $V$ be the Riemann surface consisting of a connected component of the set  of quadratic rational maps $f$ with numbered critical values $v_1(f)$ and $v_2(f)$, such that $v_1(f)$ is of some fixed period, quotiented by M\"obius conjugation (all as previously stated). Let $g_0\in V$ be such that there exists a finite connected graph $G(g_0)\subset \overline{\mathbb C}$ with the following properties.
\begin{enumerate}
\item  $G(g_0)\subset g_0^{-1}(G(g_0))$.
\item For each edge $e$ of $G(g_0)$, $g_0^n(e)$ is more than a single edge of $G(g_0)$, for all sufficiently large $n$.
\item $G(g_0)$ separates $v_1(g_0)$ and $v_2(g_0)$.
\item  $v_1(g_0)$ and $v_2(g_0)$ are  separated from $G(g_0)$ by $g_0^{-t}(G(g_0))\setminus G(g_0)$ for some $t>0$. 
\end{enumerate}

If $g_1\in V([G(g_0)])$, and  $v_2(g_1)\in g_1^{-s}(G(g_1))\setminus G(g_1)$ for some $s>0$,  then $g_1\in V(G(g_0),g_0)$. 

Let ${\cal{Q}}$ be defined as in \ref{3.1} for $G(g_0)$.  Let $\underline{Q}\in {\cal{Q}}$. 
\begin{itemize}
\item $V(\underline{Q},g_0)$ is nonempty, connected and its complement in $V(G(g_0),g_0)$ is connected.
\item If there is some $n$ such that 
$$Q_i\subset G(Q_0,\cdots Q_{n-1})\cap \mbox{int}(Q_0(g))\mbox{ for all }i\ge n,$$
 or if there is $n$ such that 
 $$\bigcap _{i\ge 0}Q_i(g)\subset \mbox{int}(Q_n(g))\mbox{ for all }g\in V(Q_0,\cdots Q_n),$$ 
 $$g^m\left(\bigcap _{i\ge 0}Q_i(g)\right)\cap \mbox{int}(Q_n(g))=\emptyset \mbox{ for all }m>0,$$
 then $V(\underline{Q},g_0)$ is a single point.
\item  If $\underline{Q}=(Q_0,\cdots Q_n)\in {\cal{Q}}_n$ and  if  $Q_i\in{\cal{P}}(Q_0,\cdots Q_{i-1})$ for each $1\le i\le n$, then $V(\underline{Q},g_0)$ is open, and 
$$\overline{V(\underline{Q},g_0)}\subset V(\underline{Q},g_0)\cup V(Q_0,\cdots Q_{n-1},\partial Q_n,g_0),$$
where the closure is taken in $V(G(g_0),g_0)$.
\end{itemize}
Moreover
$$V(G(g_0),g_0)=V([G(g_0)]).$$
\end{theorem}

For the rest of this section, we keep the hypotheses of Theorem \ref{3.2}, and we use the notation that we have established. The following proposition shows that the possibilities for $\underline{Q}$ can be analysed by simply looking at those $\underline{Q}=(Q_i)\in {\cal{Q}}$ for which all the $Q_i$ are topological discs. 

\begin{proposition}\label{3.3} For any $(Q_0,\cdots Q_n)\in {\cal{Q}}_n$, there is $(Q_0,Q_1'\cdots Q_{n}')\in {\cal{Q}}_{n}$ such that $Q_i'$ is a topological disc for all $0\le i\le n$,  and $Q_i\subset Q_i'$ for $0< i\le n$, and there are isotopic subgraphs $G'(Q_0,\cdots Q_{n-1})$ and $G'(Q_0,Q_1'\cdots Q_{n-1}')$ of $G(Q_0,\cdots Q_{n-1})$ and $G(Q_0,Q_1',\cdots Q_{n-1}')$ such that $Q_i\subset G'(Q_0,Q_1',\cdots Q_{n-1})$ for all $1\le i\le n-1$ with $Q_i'\ne Q_i$, and the isotopy between $G'(Q_0,Q_1,\cdots Q_i)$ and $G'(Q_0,Q_1',\cdots Q_i')$ extends to the isotopy between $G'(Q_0,Q_1,\cdots Q_{n-1})$ and $G'(Q_0,Q_1',\cdots Q_{n-1}')$ for all $0\le i<n-1$.\end{proposition}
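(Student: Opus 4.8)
The plan is to argue by induction on $n$. The structural fact to record first is that in any $(Q_0,\dots,Q_n)\in{\cal Q}_n$, as soon as one entry $Q_j$ is an edge or a vertex of $G(Q_0,\dots,Q_{j-1})=G_j$, then, since $G(Q_0,\dots,Q_{i-1})\subset G(Q_0,\dots,Q_i)$ for all $i$, every later entry $Q_i$ is an arc or a point lying in a subdivision of $Q_j$. Thus an itinerary splits canonically as a ``$2$-cell part'' $(Q_0,\dots,Q_{j-1})$, whose entries are tiles of the partitions ${\cal P}_i$, followed by a ``graph part'' $(Q_j,\dots,Q_n)$, whose entries lie in the graphs, either part possibly empty. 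If the graph part is empty there is nothing to do: take $Q_i'=Q_i$ and $G'=G(Q_0,\dots,Q_{n-1})$. Otherwise let $Q_j$ be the first degenerate entry; we shall leave the $2$-cell part untouched and build $Q_j',\dots,Q_n'$ together with the comparison subgraphs and isotopies by induction on the length $n-j+1$ of the graph part.

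For the inductive step, suppose the prefix $(Q_0,\dots,Q_{n-1})$ has been treated, yielding a disc itinerary $(Q_0,Q_1',\dots,Q_{n-1}')$, containments $Q_i\subset Q_i'$, and an ambient isotopy carrying a suitable subgraph $G'(Q_0,\dots,Q_{n-2})$ of $G_{n-1}$ onto the corresponding subgraph for the primed itinerary, these isotopies being coherent as the level increases (each obtained from the previous one by lifting under $g$). Transport $Q_n$ along a level-$n$ extension of the top isotopy into $Q_{n-1}'$. If $Q_n$ is a tile of ${\cal P}_n$, take $Q_n'$ to be its image and extend the isotopy one more level by lifting it under $g$; this lift exists because a parameter realising the primed itinerary has its second critical value $v_2$ in the interiors of the tiles $Q_i'$, hence off every graph $G_i$, so no branch point of $g$ sits on the graph that is being moved. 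If $Q_n$ is an edge $e$ of $G_n$, choose a tile $R$ of ${\cal P}_n$ with $e\subset\partial R$ and $R\subset Q_{n-1}'$ --- when $n=j$ the edge $e$ lies in $\mathrm{int}(Q_{n-1}')$ and there are two choices of $R$, on either side of $e$; when $n>j$ we have $e\subset\partial Q_{n-1}'$ and we take the tile on the $Q_{n-1}'$ side --- and let $Q_n'$ be either $R$ itself, or, if one wishes $Q_n'$ to be a complementary component of a subgraph, the union of $R$ with the tile across $e$; the new comparison subgraph $G'$ is the previous one with the edge $e$, and the edges carrying its role under the dynamics, deleted. One checks at once that $Q_{i+1}'\subset Q_i'$, since $Q_{i+1}\subset Q_i\subset\partial Q_i'\subset Q_i'$ and the new tile is taken on the $Q_i'$ side, and that $(Q_0,Q_1',\dots,Q_n')$ is again a legitimate element of ${\cal Q}_n$: all its entries are nested tiles of the ${\cal P}_i$, and the only remaining condition, that $v_2$ lie in the interior of every $Q_i'$, is satisfiable because the nested interiors have nonempty intersection $\mathrm{int}(Q_n')$. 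The case in which $Q_n$ is a vertex is handled in the same way, with $e$ replaced by a small pair of edges meeting at the vertex, or simply by a tile incident to the vertex.

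The one step that needs genuine care is preservation of the disc property: when $Q_n'$ is taken to be the union of the two ${\cal P}_n$-tiles flanking the deleted edge $e$, this union must be a topological disc, i.e.\ those two tiles must meet along $e$ and along nothing else. Here one uses the quality of the graphs constructed in Section \ref{1}: $G(g_0)$ is cellular with quasi-circle boundaries, and the closures of any two of its complementary components meet in at most one edge or vertex; and --- after replacing $G(g_0)$ by a high preimage $g_0^{-m}(G(g_0))$ if necessary, exactly as in the proof of Lemma \ref{2.11} --- this property passes to every $G_n(g)=g^{-n}(G(g))$, so the two tiles flanking any edge of $G_n$ meet only along that edge and their union is a disc. (If one does not insist that $Q_n'$ be a single complementary component of a subgraph, one may take $Q_n'=R$, a single flanking tile, and the disc property is then immediate; this weaker form already suffices for the use made of the proposition in the proof of Theorem \ref{3.2}.) The residual bookkeeping --- that the isotopies at consecutive levels really are lifts of one another and so extend coherently, and that the deleted edges of $G_n$ are exactly those along which a preimage of the critical point $c_2$ has been forced onto the graph --- is routine once the disc property and the lifting step are in hand, and is precisely where the argument exploits that $G_n(g)$ fails to vary isotopically only through the birth of such polluted edges.
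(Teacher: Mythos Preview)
Your overall strategy --- induct on $n$, replace each degenerate $Q_i$ by a flanking tile of ${\cal P}_i$, and compare the two graph sequences via common subgraphs obtained by deleting the ``polluted'' edges --- matches the paper's. The paper, however, isolates one dynamical fact as a separate lemma (Lemma~\ref{3.4}): if $e$ is any edge of $G_n(g)\setminus G(g)$ then $e\cap g^{-m}(e)=\emptyset$ for every $m\ge1$. This is not decoration; it is precisely what guarantees that the next itinerary entry $Q_{\ell+1}\subset e$ survives in the comparison subgraph $G_{\ell+1}\setminus g^{-1}(Q_\ell)$, since $g^{-1}(Q_\ell)\subset g^{-1}(e)$ is disjoint from $e\supset Q_{\ell+1}$. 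Once this is known, the paper locates the \emph{first} non-disc entry $Q_i$ (necessarily on an edge $e\subset G_i\setminus G_{i-1}$), applies Lemma~\ref{3.4} once, and observes that all later $Q_\ell\subset e$ are automatically disjoint from the deleted preimages; the $Q_\ell'$ for $\ell>i$ are then uniquely determined by the choice of $Q_i'$.

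In your write-up the corresponding step is absorbed into ``routine bookkeeping\ldots\ precisely where the argument exploits that $G_n(g)$ fails to vary isotopically only through the birth of such polluted edges''. But that sentence does not establish that $Q_n$ itself is \emph{not} one of the polluted edges --- which is exactly the content of Lemma~\ref{3.4}. Without it you cannot ``transport $Q_n$ along a level-$n$ extension of the top isotopy'': the lift of the isotopy exists, but $Q_n$ might lie in the part of $G_n$ that the lifted subgraph does not cover. The proof of Lemma~\ref{3.4} is short (reduce to edges of $G_1\setminus G_0$ and use $G_1\subset g^{-m}(G_0)$ for $m\ge1$, whence $g^{-m}(G_1\setminus G_0)\cap G_1=\emptyset$), but it needs to be stated and invoked.

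Two smaller points. Your detour through the ``disc property'' for a union of two flanking tiles is unnecessary: a single tile of ${\cal P}_n$ is already a closed disc, and that is all the proposition asks of $Q_n'$. Your appeal to Section~\ref{1} for the single-edge-intersection property is also off: that condition is a \emph{hypothesis} on $G_0$ in Theorem~\ref{1.1}, not a conclusion about $G$ or its iterated preimages, and replacing $G$ by a deep preimage does not obviously restore it. Since the single-tile choice already works, this whole paragraph can be dropped.
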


This is not difficult. The main step is the following.
\begin{lemma}\label{3.4} If $e$ is any edge of $G_n(g)\setminus G(g)$, for any $g\in V(G(g_0))$ and any integer $n\ge 1$, then $e\cap g^{-m}(e)=\emptyset $ for any $m\ge 1$. \end{lemma}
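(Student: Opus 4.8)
I would exploit the filtration of $\overline{\mathbb C}$ by the graphs $G_k(g)=g^{-k}(G(g))$, which are forward invariant (since $G(g)\subseteq g^{-1}(G(g))$ gives $g(G(g))\subseteq G(g)$, whence $g(G_k(g))=G_{k-1}(g)$), together with the ``level'' function this induces, to show first that a point of $\mathrm{int}(e)$ mapping back into $e$ must land on $\partial e$, and then dispose of that residual possibility using the expansion of $g$ on $G(g)$ from \ref{1.10}. A preliminary point: because the vertex set $X(g)$ of $G(g)$ is a finite forward-invariant set, and critical points and self-intersection points behave compatibly under pull-back, each $G_{k-1}(g)$ is a subcomplex of $G_n(g)$ for $n\ge k-1$. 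For $x$ in the union of the $G_k(g)$ set $\ell(x)=\min\{k\ge 0:g^k(x)\in G(g)\}$; forward invariance of $G(g)$ gives $\ell(g(x))=\max(\ell(x)-1,0)$.

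\textbf{Step 1: level is constant on $\mathrm{int}(e)$.} Let $e$ be an edge of $G_n(g)\setminus G(g)$ and $j=\min\{k:e\subseteq G_k(g)\}$; since $e\not\subseteq G(g)$ we have $1\le j\le n$. As $G_{j-1}(g)$ is a subcomplex of $G_n(g)$, its intersection with the open arc $\mathrm{int}(e)$ is empty or all of it, and minimality of $j$ forces it to be empty; hence $\ell\equiv j$ on $\mathrm{int}(e)$ while $\ell\le j$ on $\partial e$. Now if $x\in e$ and $g^m(x)\in e$ for some $m\ge 1$, then $\ell(g^m(x))\le\max(\ell(x)-m,0)\le\max(j-m,0)<j$, so $g^m(x)\notin\mathrm{int}(e)$, i.e. $g^m(x)\in\partial e$. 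In particular $\mathrm{int}(e)\cap g^{-m}(\mathrm{int}(e))=\emptyset$, which already settles the Lemma for every $x$ with $g^m(x)\notin\partial e$, and reduces it to showing $\partial e\cap g^m(e)=\emptyset$ for all $m\ge 1$.

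\textbf{Step 2: the residual boundary case.} First I would reduce to $j=1$: applying $g^{j-1}$ carries $e$ homeomorphically onto an edge $e'=g^{j-1}(e)$ of $G_1(g)\setminus G(g)$ and, since $g^{j-1}$ commutes with $g^m$ and maps $\partial e$ onto $\partial e'$, takes any point of $e\cap g^{-m}(\partial e)$ to one of $e'\cap g^{-m}(\partial e')$ --- here one uses that the critical points of $g^{j-1}$ are vertices of $G_{j-1}(g)$, hence not interior to $e$. For $e$ an edge of $G_1(g)\setminus G(g)$, $g$ maps $e$ homeomorphically onto a single edge $E$ of $G(g)$; so an endpoint $a$ of $e$ hit by $g^m(x)$ with $x\in\mathrm{int}(e)$ has $\ell(a)=0$ by Step 1, i.e. $a\in G(g)$, and moreover $g(a)\in X(g)$ (a non-critical vertex of $G_1(g)$ lying in the interior of an edge of $G(g)$ must map to a vertex of $G(g)$). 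Writing $y=g(x)\in\mathrm{int}(E)$ this gives $g^m(y)=g(a)\in X(g)$. I would then put on $G(g)$ the expanding metric of \ref{1.10}, with constant $\lambda>1$, and argue --- using that the combinatorial type of $g|_{G(g)}$ and of the inclusion $G(g)\subseteq g^{-1}(G(g))$ is the same for all $g\in V(G(g_0))$, and that there are only finitely many edges and vertices --- that such a configuration $y\in\mathrm{int}(E)$, $g^m(y)\in X(g)$, arising from an edge $e\not\subseteq G(g)$, propagates under iteration of $g^m$ to a genuine periodic coincidence on $G(g)$, which is incompatible with $\lambda>1$. The only other possibility, that $x$ is itself an endpoint of $e$, involves merely the two endpoints of $e$ and is excluded the same way.

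\textbf{Main obstacle.} Steps involving the level function are bookkeeping; the genuine content is Step 2 --- excluding that the forward orbit of an interior point of $e$ reaches an endpoint of $e$. Since those endpoints sit (after one application of $g$, once we are in the level-$1$ situation) at vertices of $G(g)$, the level function alone gives nothing there, and the only leverage is the expansion of $g$ on $G(g)$ combined with the finiteness of $X(g)$ and of the edge set; turning this into a clean contradiction is the delicate part.
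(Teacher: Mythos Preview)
Your Step 1 is essentially the paper's entire proof. The paper's argument is a two-line set-theoretic computation: after reducing to $n=1$, one observes that
\[
g^{-m}(G_1(g)\setminus G(g)) \;=\; g^{-(m+1)}(G(g))\setminus g^{-m}(G(g)),
\]
and this set is disjoint from $g^{-m}(G(g))\supset G_1(g)$ for every $m\ge 1$. Since the paper takes $e\subset G_1(g)\setminus G(g)$ as a \emph{set}, this gives $e\cap g^{-m}(e)\subset G_1\cap g^{-m}(G_1\setminus G)=\emptyset$ at once, with no separate treatment of interior versus boundary. Your level function $\ell$ is just a pointwise repackaging of the same filtration inclusion $G_k\subset G_{k+1}$.

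Your Step 2 arises only because you allow the closed edge $e$ to have an endpoint lying on $G(g)$, where the level drops to $0$ and the filtration argument says nothing. The paper does not entertain this possibility: ``edge of $G_n\setminus G$'' is read as an arc lying in the set $G_n\setminus G$, and under that reading the set-theoretic computation above (equivalently, your level argument) already handles every point of $e$. So the ``main obstacle'' you identify is not present in the paper's treatment, and the expansion-and-periodicity argument you sketch --- which you yourself concede you cannot quite close --- is unnecessary. Drop Step 2; Step 1, stated in the set-theoretic form above, is the complete proof.
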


\noindent{\em{Proof.}} It suffices to prove this for $n=1$, because any edge $e$ of $G_n(g)\setminus G(g)$ is a contained in $g^{1-n}(e')$ for some edge $e'$ of $G_1(g)\setminus G(g)$. So now we assume that $e$ is an edge of $G_1(g)\setminus G(g)$. Now $G_1(g)=g^{-1}(G(g))$. So 
$$g^{-m}(G_1(g)\setminus G(g))=g^{-(m+1)}(G(g))\setminus g^{-m}(G(g)).$$
So 
$$g^{-m}(G_1(g) \setminus G(g))\cap g^{-m}(G(g))=\emptyset $$
for all $m\ge 0$. But $G(g)\subset g^{-1}(G(g))=G_1(g)$, and hence $G(g)\subset g^{-m}(G(g))$ for all $m\ge 0$ and $G_1(g)\subset g^{-m}(G(g))$ for all $m\ge 1$. So 
$$g^{-m}(G_1(g)\setminus G(g))\cap G_1(g)=\emptyset $$
for all $m\ge 1$, as required.

\ep

\noindent {\em{Proof of the proposition.}}  We prove this by induction on $n$. If $n=1$ then there is nothing to prove, because $G(g)$ is isotopic to $G(g_0)$. So we assume it is true for $n-1\ge 1$, and we need to prove that it is also true for $n$. If $Q_{n}$ is a topological disc, there is nothing to prove.  Otherwise, there is a least $1\le i\le n$ such that $Q_i$ is not a topological disc. Then $Q_i$ is an edge or point of $G(Q_0,\cdots Q_{i-1})$. Let $Q_i(g)$ be the corresponding isotopically varying edge or point of $G(Q_0,\cdots Q_{i-1})$ for $g\in V(Q_0,\cdots Q_{i-1},g_0)$. Fix such a $g$. Write $e=Q_i(g)$ if $Q_i(g)$ is an edge of $G_i(g)$. Otherwise, let $e$ be an edge of $G_i(g)$ in $\partial Q_{i-1}(g)$ which contains the point $Q_i(g)$. Let $Q_i'$ be any  closed topological disc such that $(Q_0,\cdots Q_{i-1},Q_i')\in {\cal{Q}}_i$ with $Q_i\subset Q_i'$. It has already been noted in \ref{3.1} that if $g\in V(Q_0,\cdots Q_{i-1},Q_i,g_0)$ and $h\in V(Q_0,\cdots Q_{i-1},Q_i',g_0)$ then $g^{-1}(G_i(g)\setminus Q_i(g))$ and $h^{-1}(G_i(h)\setminus Q_i(h))$ are isotopic.  Then by \ref{3.4}, $e\cap g^{-m}(e)=\emptyset $ for all $g\in V(Q_0,\cdots Q_i)\cup V(Q_0,\cdots Q_{i-1},Q_i')$ and all $m>0$.  So $Q_\ell\cap g^{i-\ell }(e)=\emptyset $ for all $i<\ell \le n$ and for all such $g$. For $i\le \ell \le n$ we choose a topological disc  $Q_\ell '$ so that $(Q_0,\cdots Q_{i-1},Q_i'\cdots Q_{\ell }')\in {\cal{Q}}_\ell $ and $Q_\ell \subset Q_\ell '$. Once $Q_i'$ has been chosen, the choice of $Q_\ell '$ for $\ell >i$ is unique. So then by induction on $\ell $, we have that if  $g\in V(Q_0,\cdots Q_\ell ,g_0)$ and $h\in V(Q_0,\cdots Q_{i-1},Q_i'\cdots Q_\ell ',g_0)$, then 
$$G_{\ell +1}'(g)=g^{-1}(G_\ell '(g))\setminus g^{-1}(Q_\ell )=G_{\ell +1} (g)\setminus \bigcup _{j=i}^\ell g^{-1-\ell +j}(Q_j(g))$$ 
and 
$$G_{\ell +1}(h)=G_{\ell +1}(h)\setminus \bigcup _{j=i}^\ell  h^{-1-\ell +j}(Q_j (h))$$
 are isotopic. The claimed extension property holds, by construction.\ep

The following lemma uses Thurston's theorem for critically finite branched coverings, and the set-up for this. Hopefully the explanation is sufficiently self-contained, but see \cite{R3} or \cite{D-H2} for more details. Two critically finite branched coverings $f_0$ and $f_1$ are said to be {\em{Thurston equivalent}} if there is a homotopy $f_t$ ($t\in [0,1]$ through critically finite branched coverings, such that the postcritical set $Z(f_t)$ varies isotopically for $t\in [0,1]$. Thurston's theorem gives a necessary and sufficient condition for a critically finite branched covering $f$ of $\overline{\mathbb C}$ to be Thurston equivalent to  a critically finite rational map. The rational map is then unique up to conjugation by a M\"obius transformation. The condition is in terms of non-existence of loop sets in $\overline{\mathbb C}\setminus Z(f)$ with certain properties. In the case of degree two branched coverings, the criterion reduces to the non-existence of a {\em{Levy cycle}}, as is explained in the proof below.

\begin{lemma}\label{3.5} Let $(Q_0,\cdots Q_{n})\in {\cal{Q}}_{n}={\mathcal{Q}}_n(g_0)$ where  $Q_{n}$ is a closed topological disc (that is, the closure of a component of $\overline{\mathbb C}\setminus G(Q_0,\cdots Q_{n-1})$ if $n\ge 1$, or $Q_0=Q_0(g_0)$ if $n=0$) and that $\underline {Q}=(Q_i:0\le i<N)\in {\cal{Q}}$  for $N>n+1$, possibly $N=\infty, $ with $Q_i\subset Q_n\cap G(Q_0,\cdots Q_n)\cap \mbox{int}(Q_{n-1})$ for $i>n$ and such that  $\bigcap _{i\ge 0}Q_i$ represents an eventually periodic point. Suppose that $V(Q_0,\cdots Q_n,[g_0])\ne \emptyset  $. Then $V(\underline{Q},[g_0])=\{ g_1\} $  for some $g_1\in V$.\end{lemma}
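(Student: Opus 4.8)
The plan is to deduce both assertions --- nonemptiness of $V(\underline{Q})$ and that it has at most one point --- from Thurston's rigidity and realisation theorem for critically finite branched coverings (see \cite{R3}, \cite{D-H2}). The first observation is that every $g_1\in V(\underline{Q})$ is critically finite: $v_1(g_1)$ is periodic by the hypothesis on $V$, and since $v_2(g_1)\in Q_i(g_1)$ for all $i$ while $\bigcap_{i\ge0}Q_i$ represents an eventually periodic point, $v_2(g_1)$ must be exactly that point, so $c_2(g_1)$ is preperiodic. Moreover $\underline{Q}$ together with the isomorphism type of $(G(g_0),g_0)$ built into the definition of $V(G(g_0))$ determines the postcritical portrait of $g_1$ and, for every $m$, the conjugacy class of $(G_m(g_1),g_1)$.

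\textbf{At most one point.} Given $g_1,g_2\in V(\underline{Q})$, composing the markings witnessing $g_1,g_2\in V(G(g_0))$ produces a homeomorphism $\varphi$ of $\overline{\mathbb C}$ with $\varphi(G(g_1))=G(g_2)$, $\varphi\circ g_1=g_2\circ\varphi$ on $G(g_1)$, and $\varphi$ carrying $Z(g_1)$ onto $Z(g_2)$ compatibly with the dynamics (both realise the same $\underline{Q}$, hence the same finite portrait). Since $G(g_i)\subset g_i^{-1}(G(g_i))$, the $(g_1,g_2)$-lift $\varphi'$ of $\varphi$ agrees with $\varphi$ on $G(g_1)$, and it also agrees with $\varphi$ on $Z(g_1)$ after an isotopy rel $Z(g_1)$; as $G(g_1)\cup Z(g_1)$ is fine enough that two homeomorphisms agreeing on it are isotopic rel $Z(g_1)$, this shows $g_1$ and $g_2$ are Thurston equivalent, hence M\"obius conjugate, hence equal in $V$. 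Thus $\#V(\underline{Q})\le1$.

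\textbf{Nonemptiness.} Pick $g\in V(Q_0,\dots,Q_n)\ne\emptyset$ and let $z^{*}$ be the point represented by $\bigcap_{i\ge0}Q_i(g)$; by \ref{3.1}, both $z^{*}$ and $v_2(g)$ lie in $\mathrm{int}(Q_{n-1}(g))$, which is disjoint from $G_{n-1}(g)\supseteq G(g)$. Join $v_2(g)$ to $z^{*}$ by an arc in $\mathrm{int}(Q_{n-1}(g))$ missing the finite forward $g$-orbit of $z^{*}$; a homeomorphism isotopic to the identity, supported in a thin neighbourhood of that arc, disjoint from $G(g)$, from $Z(g)\setminus\{v_2(g)\}$, and from that forward orbit, then yields a quadratic branched covering $F$ with $G(g)\subset F^{-1}(G(g))$, $F|_{G(g)}=g|_{G(g)}$, $v_1$ still periodic, and $c_2$ with the eventually periodic forward orbit prescribed by $\underline{Q}$; so $F$ is critically finite with that portrait (this is the standard step of realising prescribed combinatorics by modifying a nearby map). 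For $m$ large, $G'=F^{-m}(G(g))$ is a finite connected graph with $G'\subset F^{-1}(G')$, $F$ expanding on $G'$, and every component of $\overline{\mathbb C}\setminus G'$ meeting $Z(F)$ in at most one point. The argument of Theorem \ref{1.1}, using that $F^{-1}$ contracts these regions, shows that no essential simple closed curve in $\overline{\mathbb C}\setminus Z(F)$ is isotopic rel $Z(F)$ to a degree-one component of its own $F$-preimage, so $F$ has no Levy cycle and hence no Thurston obstruction. By Thurston's realisation theorem $F$ is Thurston equivalent to a critically finite rational map $g_1$, unique up to conjugacy; the equivalence transports $G(g)$ to a graph $G(g_1)$ with $(G(g_1),g_1)\cong(G(g_0),g_0)$, with $v_2(g_1)\notin G(g_1)$ realising $\underline{Q}$, and with a small $U(g_1)\supset G(g_1)$, $\partial U(g_1)\subset g_1^{-N}(G(g_1))\setminus G(g_1)$, avoiding $v_2(g_1)$; thus $g_1\in V(G(g_0))$ and $g_1\in V(\underline{Q})$. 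With the previous paragraph, $V(\underline{Q})=\{g_1\}$.

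\textbf{Expected main obstacle.} The crux is the exclusion of a Levy cycle for $F$: pushing a critical value onto an eventually periodic point can in general create a Thurston obstruction, and what rules it out here is precisely that $z^{*}$ sits on the backward orbit of the expanding invariant graph and is eventually periodic, which keeps the whole postcritical set of $F$ spread along $G'$. Turning the contraction of $F^{-1}$ on the complementary regions of $G'$ into a clean absence of Levy cycles, in the Fatou-poor critically finite dynamics of $F$, is the technical heart; the rest --- choosing the modifying homeomorphism supported away from $G(g)$ and the relevant orbit, and checking $g_1\in V(G(g_0))$ --- is routine.
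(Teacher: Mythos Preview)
Your overall strategy coincides with the paper's: pick $g\in V(Q_0,\dots,Q_n)$, push $v_2(g)$ along an arc in $\mathrm{int}(Q_{n-1}(g))$ to the eventually periodic point $z_0=\bigcap_i Q_i(g)$ to obtain a critically finite branched covering $F=\sigma_\beta\circ g$, exclude Levy cycles, and invoke Thurston's theorem for both existence and uniqueness. The construction of $F$ and the uniqueness step are handled essentially as in the paper (the paper phrases uniqueness via $G_m(g_i)$ rather than $G(g_i)$, but this is a minor difference).

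The genuine gap is your Levy-cycle exclusion. Two specific problems. First, $F$ is only a topological branched covering; there is no metric in which $F$ is expanding on $G'=F^{-m}(G(g))$, and complementary regions of $G'$ need not be small in any useful sense, so the assertion ``every component of $\overline{\mathbb C}\setminus G'$ meets $Z(F)$ in at most one point'' is unsupported. Second, invoking ``the argument of Theorem~\ref{1.1}'' is a non sequitur: that theorem constructs Markov partitions for genuinely expanding rational maps and says nothing about Levy cycles for branched coverings. You correctly identify this step as the crux in your final paragraph, but you have not actually carried it out.

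The paper's argument is combinatorial rather than metric, and uses a feature you have available but do not exploit: since the support of $\sigma_\beta$ lies in $\mathrm{int}(Q_{n-1}(g))$, one has $\sigma_\beta^{-1}(G_{n-1}(g))=G_{n-1}(g)$ and hence $F^{-1}(G_{n-1}(g))=g^{-1}(G_{n-1}(g))=G_n(g)$. Given a hypothetical Levy cycle $\{\gamma_i\}$, put each $\gamma_i$ in minimal position with $G_{n-1}(g)$ and count essential intersections. Each essential intersection of $\gamma_{i+1}$ with $G_{n-1}(g)$ lifts to an essential intersection of $\gamma_i'$ with $G_n(g)$; since $\gamma_i'\simeq\gamma_i$ rel $X$, this forces the lifted intersection to lie already in $G_{n-1}(g)$. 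Iterating and using $g^{-1}(G_{n-j}\setminus G_{n-j-1})=G_{n-j+1}\setminus G_{n-j}$, one pushes all essential intersections down to $G_0(g)$, with each arc of $\gamma_i$ between consecutive intersections confined to a single set of ${\cal P}_{n-1}(g)$ adjacent to a vertex of $G_0(g)$. Replacing $\gamma_i$ by iterated lifts $\gamma_i^m$ makes the confining partition ${\cal P}_{n+m-1}(g)$ arbitrarily fine, which is incompatible with $\gamma_i^m$ being a fixed isotopy class of essential curve. That is the missing mechanism; once you supply it, your proof aligns with the paper's.
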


\begin{remark} Note that there is still no statement that  $g_1\in V(G(g_0),g_0)$. That will come later.\end{remark}

\noindent{\em{Proof.}} Let $g\in V(Q_1,\cdots Q_{n},[g_0])$. Then $G(Q_0,\cdots Q_{n})$ is canonically homeomorphic to $G_{n+1}(g)$, and the  homeomorphism carries $\cap _{i\ge 0}Q_i$ to a point $z_0$ in  $G_{n+1}(g)$, which, like $v_2(g)$, is in $\mbox{int}(Q_{n-1}(g))\cap Q_n(g)$. We can construct a path $\beta :[0,1]\to Q_{n}(g)\cap \mbox{int}(Q_{n-1}(g))$ with $\beta (0)=v_2(g)$ and $\beta (1)=\cap _{0\le i<N}Q_i(g)=z_0$. We can also choose $\beta $ so that $\beta ([0,1))\subset \mbox{int}(Q_n(g))$. The hypotheses ensure that 
$$z_0\in G_{n+1}(g)\setminus G_{n-1}(g)=(G_{n+1}(g)\setminus G_n(g))\cup(G_n(g)\setminus G_{n-1}(g)).$$
 The endpoint-fixing  homotopy class of $\beta $ is uniquely determined in 
 $$\overline{\mathbb C}\setminus \{ g^i(z_0):i>0\} .$$
 This means that the Thurston-equivalence class of the post-critically finite branched covering $\sigma _\beta \circ g$ is well defined, where $\sigma _\beta $ is a homeomorphism which is the identity outside an arbitrarily small neighbourhood of $\beta $ and maps $\beta (0)$ to $\beta(1)=z_0$. 

Then we claim that $\sigma _\beta \circ g$ is Thurston equivalent to a rational map. Since this is a branched covering of degree two, it suffices to prove the non-existence of a Levy cycle. By definition, a Levy cycle is an isotopy class of a collection of distinct and disjoint simple closed loops, where  the isotopy is in the complement of the postcritical set. In the present case, it is convenient to consider isotopy in the complement of a potentially larger forward invariant  set $X$ consisting of the union of the forward orbits of $z_0$, $c_1(g)$ and the vertices of $G_0(g)$. Thurston's Theorem adapts naturally to this setting. A Levy cycle for $\sigma _\beta \circ g$ is then the isotopy class in $\overline{\mathbb C}\setminus X$ of a finite  set $\{ \gamma _i:1\le i\le r\} $ of distinct and disjoint simple closed loops, such that there is a component $\gamma _i'$ of $(\sigma _\beta \circ g)^{-1}(\gamma _{i+1})$ (writing $\gamma _1=\gamma _{r+1}$, so that this also makes sense if $i=r$), such that $\gamma _i$ and $\gamma _i'$ are isotopic in $\overline{\mathbb C}\setminus X$, for $1\le i\le r$. We consider the case when $z_0\in \partial Q_n(g)\cap \mbox{int}(Q_{n-1}(g)\subset G_n(g)\setminus G_{n-1}(g)$. The other case, when $z_0\in G_{n+1}(g)\cap \mbox{int}(Q_n(g))\subset G_{n+1}(g)\setminus G_n(g)$, can be dealt with similarly. The $\gamma _i$ can also  be chosen to have only transversal intersections with $G_{n-1}(g)$. We have $z_0\notin G_{n-1}(g)$. So $(\sigma _\beta \circ g)^{-1}(G_{n-1}(g))=g^{-1}(G_{n-1}(g))=G_n(g)$. Now $(\sigma _\beta \circ g)^{-1}(\gamma _{i+1})$ has two components $\gamma _i'$ and $\gamma _i''$,  each of them  mapped homeomorphically to $\gamma _{i+1}$ by $\sigma _\beta \circ g$. Each  transverse intersection between $\gamma _i$ and $G_{n-1}(g)$ in $\overline{\mathbb C}\setminus X$ lifts to two transverse intersections between $\gamma _i'\cup \gamma _i''$ and $G_n(g)\supset G_{n-1}(g)$ in $\overline{\mathbb C}\setminus (\sigma _\beta\circ g)^{-1}(X)$, one of these intersections with $\gamma _i'$ and one with $\gamma _i''$. Because of the isotopy between $\gamma _i$ and $\gamma _i'$, the intersection on $\gamma _i'$ must be in $G_{n-1}(g)$ and must be essential in $\overline{\mathbb C}\setminus X$. So this means that each arc on $\gamma _{i+1}$ between essential intersections in $G_{n-1}(g)$ lifts to an arc on $\gamma _i'$ between essential intersections in $G_{n-1}(g)$, and this arc can be isotoped in the complement of $X$ to an arc on $\gamma _i$ between essential intersections in $G_{n-1}(g)$. Since $g^{-1}(G_{n-j}(g)\setminus G_{n-j-1}(g))=G_{n-j+1}(g)\setminus G_{n-j}(g)$, it follows by induction on $j\ge 1$ that all intersections between $\gamma _i$ and $G_{n-1}(g)$ are in $G_0(g)$. So every arc of intersection of $\gamma _i$ with $G_{n-1}(g)$ must be with $G_0(g)$, and  in a single set of ${\cal{P}}_{n-1}(g)$ adjacent to a vertex of $G_0(g)=G(g)$. If $n$ is large enough, this is clearly impossible, because successive arcs are too far apart. But we can assume $n$ is large enough to make this impossible, by replacing $\gamma _i$ by $\gamma _i^m$ if necessary, where $\gamma _i^0=\gamma _i$ and $\gamma _i^1=\gamma _i'$ and  $\gamma _i^{m+1}$ is isotopic to $\gamma _i^m$, obtained by lifting, under $\sigma _\beta \circ g$, the isotopy between $\gamma _{i+1}^m$ and $\gamma _{i+1}^{m-1}$, writing $\gamma _1^m=\gamma _{r+1}^m$. It follows that all intersections between $\gamma _i^m$ and $G_0(g)$ are in a single set of ${\cal{P}}_{n+m-1}(g)$, adjacent to a vertex of $G_0(g)$. If $m$ is large enough, this is, once again, impossible.

So Thurston's Theorem for critically finite branched coverings implies that $\sigma _\beta \circ g$ is Thurston equivalent to a unique rational map $g_1$. From the definitions, we have  $g_1\in V(\underline{Q})$. By the uniqueness statement in Thurston's Theorem, we have $V(\underline{Q})=\{ g_1\} $. For if $g_2\in V(\underline {Q}$  and $v_1(g_1)\in G_{m+1}(g)\setminus G_m(g_1)$ for $m=n$ or $n-1$ then there is a homeomorphism $\varphi $ of $\overline{\mathbb C}$ which maps $G_m(g_1)$ to  $G_m(g_2)$ which conjugates dynamics  of $g_1$ and $g_2$ on these graphs, and maps $v_2(g_1)$ to $v_2(g_2)$ and $g_1^i(v_1(g_1)$ to $g_2^i(v_1(g_2))$ for all $i\ge 0$. So $\varphi \circ g_1\circ \varphi ^{-1}$ and $g_2$ are homotopic through branched coverings which are constant on $G_m(g_2)$, and on the postcritical sets. 
\ep

The following lemma, like the preceding one, gives a condition under which $V(\underline{Q},[g_)])$ is nonempty. It has some overlap with the preceding one, but is of a rather different type. it uses the $\lambda$-Lemma of Mane, Sullivan and Sad \cite{M-S-S} rather than Thurston's Theorem, and is a result about connected sets of maps rather than postcritically finite maps. \ref{3.6} has no uniqueness statement. The two lemmas complement each other in the proof of \ref{3.2}.

\begin{lemma}\label{3.6} Let $g_1\in V([G(g_0)])$. Let $Q_{n-1}\in {\cal{P}}_{n-1}(g_1)$ and let  $v_2(g_1)\in \mbox{int}(Q_{n-1})\cap G_n(g_1)$ for some $n\ge 1$. Then $V(\underline{Q},g_1)\ne \emptyset $ for all $\underline{Q}=(Q_i')$ with $Q_i'=Q_i$ for $i\le n-1$ such that $\cap _iQ_i$ is in the same component of $G_n(g_1)\cap \mbox{int}(Q_0)\cap Q_{n-1}$ as $v_2(g_1)$. \end{lemma}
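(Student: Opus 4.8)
The plan is to use the $\lambda$-lemma \cite{M-S-S}, together with the expansion of $g$ on $G(g)$ and its pullbacks (exactly as exploited in \ref{1.5} and \ref{2.3}), to push the free critical value $v_2(g)$ through a connected family of parameters until it realizes the prescribed combinatorics $\underline{Q}$. Write $C$ for the component of $G_n(g_1)\cap\mbox{int}(Q_0)\cap Q_{n-1}$ containing $v_2(g_1)$: it is a finite connected subgraph of $G_n(g_1)$ lying in $\mbox{int}(Q_{n-1})$, hence disjoint from $G_{n-1}(g_1)$. Since $G(g_1)\subset g_1^{-1}(G(g_1))$, a computation of the kind in \ref{3.4} gives $g_1^j(C)\subset G_{n-1}(g_1)$ for all $j\ge 1$, so $g_1^j(C)\cap C=\emptyset$ for $j\ge 1$; as $v_2(g_1)\in\mbox{int}(Q_{n-1})$ is not in $G_{n-1}(g_1)$ and $c_1(g_1)$ lies in a periodic Fatou component, it follows that the forward orbit of $C$ under $g_1$ avoids both critical points. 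Hence the branches of $g_1^{-1}$ refining $C$ into the graphs $G_{n+k}(g_1)$ are well defined, and this whole refinement sits in a forward-invariant set on which a suitable iterate of $g_1$ is expanding. Finally, since $g_1\in V(G(g_0))$ there is already a neighbourhood $U(g_1)$ of $G(g_1)$ disjoint from $v_2(g_1)$ as required in the definition of $V(G(g_0))$, and $C$ lies on the side of $\partial U(g_1)$ away from $G(g_1)$; this is what will keep the perturbed maps in $V(G(g_0))$.

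The basic move is the following. Suppose $g'$ is a map for which $v_2(g')$ lies in the interior of an edge $e'$ of some $G_{n'}(g')$ whose forward orbit avoids the critical points (by the computation above this holds for $g_1$ and $e'\subset C$, and, reapplied at each later map, for all the maps produced below). Take a holomorphic disc $\Omega\ni g'$ in $V$. By the $\lambda$-lemma there is a holomorphic motion $H_g$ of $\overline{\mathbb C}$ over $\Omega$, with $H_{g'}=\mbox{id}$, conjugating the local refinement structure near $e'$: the edge $e'(g)$, its sub-edges, their endpoints and the graphs $G_{n'+k}(g)$ near $e'(g)$ all move holomorphically and compatibly with the dynamics. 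The only critical value that could obstruct this is $v_2$ itself, but $g'(e')\cap e'=\emptyset$ forces $c_2(g)$ off $e'(g)$ whenever $v_2(g)\in\mbox{int}(e'(g))$, so the motion is well defined on the parameters that matter. Then $u(g)=H_g^{-1}(v_2(g))$ is holomorphic on $\Omega$ with $u(g')=v_2(g')$, and it is nonconstant because $v_2$ is a nonconstant holomorphic function on the one-dimensional $V$ — essentially a coordinate on it. Hence $u$ is open, and near $g'$ one can relocate $v_2(g)$ to any position in a whole sub-arc of $\mbox{int}(e'(g))$ about $v_2(g')$; passing $v_2$ through a vertex of $G_{n'}(g)$ in the same way, one can also move it onto an adjacent edge of the ambient subgraph, or into the interior of a chosen sub-edge one combinatorial level deeper. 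Throughout such a move $v_2$ stays on a fixed-combinatorics subgraph, so the parameters produced stay in $V(G(g_0))$ by \ref{2.1} and the remarks following \ref{2.2}, and in a single component.

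Starting from $g_1$ and chaining finitely many such moves along the connected subgraph $C$, relocate $v_2$ from $v_2(g_1)$ onto the edge $Q_n'$ of $C$, then into $\mbox{int}(Q_{n+1}')$, and so on. This produces parameters $g_1=g^{(0)},g^{(1)},g^{(2)},\cdots$, all lying in the one component $V(\underline{Q},g_1)$, with $v_2(g^{(k)})$ interior to $Q_{n+k}'(g^{(k)})$. Because the refinement edges of $C$ shrink under the expansion of $g^N$ on $G(g)$, the $k$-th move can be taken small, and working inside a relatively compact subset of $V$ the sequence $g^{(k)}$ converges to a map $g_\infty$ with $v_2(g_\infty)=\bigcap_i Q_i'(g_\infty)$; thus $g_\infty$ realizes $\underline{Q}$, and, being a limit of maps in the connected set $V(\underline{Q},g_1)$, lies in it. This is precisely the limit argument used for $\varphi$ in \ref{2.3} and for $\psi$ in \ref{1.10}. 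Hence $V(\underline{Q},g_1)\ne\emptyset$.

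The delicate point — and the step I expect to be the main obstacle — is the covering statement buried in the basic move: openness of $u$ only yields positions of $v_2$ near its current one, so reaching a prescribed, possibly distant sub-edge of $C$ forces one to chain many local moves through the vertices of $G_n(g_1)$, and one must verify that each such move keeps $v_2$ combinatorially bounded from $G(g)$ — equivalently, keeps the perturbed map inside $V(G(g_0))$ and inside the intended component — and that the chained moves together with the passage to the limit can be organised so that the limiting map realizes exactly $\underline{Q}$. Given the basic move, this amounts to the expansion bookkeeping of Section \ref{1}, but it is the part of the argument that needs to be done carefully.
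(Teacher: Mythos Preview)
Your overall strategy --- use the $\lambda$-lemma to make the graph move holomorphically, then track the position of $v_2(g)$ against that motion --- is the same as the paper's. The paper packages this as an open--closed argument: it defines $\psi(z,g)=\varphi_{g,g_1}(z)-v_2(g)$ and shows that the set of $z$ in the component $C$ for which $g\mapsto\psi(z,g)$ has a zero in $V(Q_0,\dots,Q_{n-1})$ is open (this is your ``basic move'') and closed.

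The genuine gap in your argument is the compactness you assume when you write ``working inside a relatively compact subset of $V$''. This is not a bookkeeping issue about staying combinatorially bounded from $G(g)$ inside $V(G(g_0))$ --- the parameters stay in $V(Q_0,\dots,Q_{n-1})$ automatically while $v_2$ sits on $C$, since $C\subset G_n\setminus G_{n-1}\subset\mbox{int}(Q_{n-1})$. The real obstruction is that $V$ is a punctured Riemann surface, and nothing you have written prevents your chained parameters $g^{(k)}$ (or, in the open--closed formulation, the sequence $g_k$ with $\psi(z_k,g_k)=0$ and $z_k\to z$) from escaping to a puncture of $V$. If that happens your limit $g_\infty$ does not exist as a rational map and the argument collapses. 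This is exactly the closedness step, and it is the hardest part of the proof.

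The paper rules out escape to infinity by invoking the known structure of the ends of $V$ (from \cite{Sti}, \cite{K-R}): near a puncture the maps $g_k$ converge, after normalisation, to a periodic M\"obius transformation $z\mapsto e^{2\pi ir/q}z$, and all short periodic and preperiodic points --- in particular all vertices of $G_n(g_k)$ --- cluster into small discs around the $q$-th roots of unity. One then shows that the edge of $G_n(g_k)\setminus G(g_k)$ carrying $v_2(g_k)$ must, up to isotopy rel the relevant marked set, lie entirely inside one such disc; the boundary circles of these discs then form a Levy cycle for the critically finite map $h_1$ produced by Lemma~\ref{3.5} (taking $Q_n'$ to be a vertex of that edge). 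This contradicts \ref{3.5}, so $g_k$ cannot escape. Your write-up contains no substitute for this step, and the ``expansion bookkeeping of Section~\ref{1}'' does not supply one: expansion controls the dynamical plane of a fixed map, not boundedness of the parameter in $V$.
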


\noindent{\em{Proof.}}  From the hypotheses on $g_1$, the graph $G_n(g)$ varies isotopically for 
$$g\in V(Q_0,\cdots Q_{n-1}\cup \partial Q_{n-1},g_1),$$ and the dynamics of maps in  $V(Q_0,\cdots Q_{n-1}\cup \partial Q_{n-1},g_1)$ are conjugate in the following sense. There is a homeomorphism 
$$\varphi _{g,h}:G_n(h)\to G_n(g),\ \ \ \ (g,h)\in (V(Q_0,\cdots Q_{n-1}\cup \partial Q_{n-1},g_1))^2,$$ 
such that the map $(g,h)\mapsto \varphi _{g,h}$ is continuous, using the uniform topology on the image and $\varphi _{g,h}\circ h=g\circ \varphi _{g,h}$ on $G_i(h)$, and $\varphi _{h,h}$ is the identity.  Each preperiodic point in $G_n(g)$ varies holomorphically for $g\in V(Q_0,\cdots Q_{n-1},g_1)$, that is, $\varphi _{g,h}(z)$ varies holomorphically with $g$ for each preperiodic point $z\in G_n(g_1)$. But preperiodic points are dense in $G_n(g_1)$. (For example, the backward orbits of vertices of $G_n(g_1)$ are dense in $G_n(g_1)$, by the expansion properties of $g_1$ on $G_n(g_1)$ established in \ref{2.2}.) It follows by the $\lambda $-Lemma \cite{M-S-S} that $(z,g)\mapsto \varphi _{g,h}(z)$  is continuous in $(z,g)$, and  holomorphic in $g\in V(Q_0,\cdots Q_{n-1};g_1)$ for each $z\in G_n(g_1)$. (In fact it is also possible to prove this by standard hyperbolicity arguments.) Now we assume without loss of generality, conjugating by a M\"obius transformation if necessary, that $Q_{n-1}(g)\subset \mathbb C$ for $g\in V(Q_0,\cdots Q_{n-1};g_1)$, in particular, $\{ v_2(g)\} \cup (G_n(g)\cap Q_{n-1}(g))\subset \mathbb C$. We consider the maps
$$\psi (z,g)=\varphi _{g,g_1}(z)-v_2(g)$$
for $z\in G_n(g_1)\cap Q_{n-1}(g_1)$. The map $(z,g)\mapsto \psi (z,g)$ is, once again, continuous in $(z,g)$ and holomorphic in $g\in V(Q_0,\cdots Q_{n-1},g_1)$.  Now write $z_0=v_2(g_1)$, so that $z_0\in G_n(g_1)\setminus G_{n-1}(g_1)$. The map $g\mapsto \psi (z_0,g)$ is holomorphic in $g$ and the inverse image of a disc round $0$ is a topological disc containing $z_0$ in its interior. By continuity, the same is true for $z$ sufficiently near $z_0$. Hence for all $z$ sufficiently near $z_0$, the map $g\mapsto \psi (z,g)$ has a zero. This argument shows that the set of $z\in Q_{n-1}(g_1)\cap \mbox{int}(Q_0)\cap G_n(g_1)$ for which $g\mapsto \psi (z,g):V(Q_0,\cdots Q_{n-1})\to \mathbb C$ has a zero in $V(Q_0,\cdots Q_{n-1})$ is open, because $z_0$ can be replaced by any other  point $z$ in $Q_{n-1}(g_0)\cap \mbox{int}(Q_0)\cap G_n(g_0)$.   But the set is also closed in $\mbox{int}(Q_0(g_1))\cap Q_{n-1}(g_1)\cap G_n(g_1)$. For suppose $\psi (z_k,g_k)=0$ and $z_k\to z$.   Then either some subsequence of $g_k$ has a limit $g$, in which case $\psi (z,g)=0$ for any such $g$, and the proof is finished, or $g_k\to \infty $ in $V$. 

We now have to deal with the situation  that $g_k\to \infty $ in $V$. In this case, we can assume that all $z_k$ are in a single edge of $G_n(g_1)$. We will now show that this implies the existence of a Levy cycle for the unique map $h_1\in G(Q_0,\cdots Q_{n-1},Q_n')$, where $Q_n'$ is a vertex of $G_n(g_1)\setminus G_{0}(g_1)$. This contradicts the result of \ref{3.5}, and hence $g_k\to \infty $ is impossible. We use certain facts about the ends of $V$. These appear in Stimson's thesis \cite{Sti} and in various other papers, for example \cite{K-R}. Choosing suitable representatives of $g_k$ up to M\"obius conjugation,chosen, in particular, so that $c_1(g_k)=1$ for all $k$, $g_k$ converges to a periodic M\"obius transformation $g(z)=e^{2\pi ir/q}z$ for some integer $q\ge 2$ and some $r\ge 1$ which is coprime to $q$, and the set $\{ g_k^i(v_1(g_k)):i\ge 0\} \cup \{ v_2(g_k)\} =Z_1(g_k)$ converges $Z_1(g)=\{ e^{2\pi ij/q}:0\le j\le q-1\} $.   Let $\overline{V}$ be the compactification of $V$ obtained by adding the M\"obius transformations at infinity and consider a fixed $g\in \overline{V}\setminus V$.  The parametrisation can be chosen so that the other critical point $c_2(g_k)=1+\rho _k$ where $\lim _{k\to \infty }\rho _k=0$. Passing to a subsequence if necessary, we may assume that $g_k$ is in a single branch of $V$ near $g$.  Then $(g_k^q(1+z\rho _k)-1)/\rho_k$ has a limit  as $k\to \infty $ for $z$ bounded and bounded from $\frac{1}{2}$, which is the quadratic map
$$h:z\mapsto qa+z+\dfrac{1}{4(z-\frac{1}{2})}$$
for a constant $a\ne 0$. 

Because of the nature of $h$,  it follows that all the eventually periodic points of $g_k$ whose forward orbits have size $\le N$ lie in the $C|\rho _k|$-neighbourhood of $Z(g)$, if $k$ is sufficiently large given $N$, for a suitable constant $C$. We will call this neighbourhood $U_1$. So if $N$ is a bound on the number of vertices of $G_n(g_k)$ --- which is, of course, the same for all $k$ --- then all vertices of $G_n(g_k)$ lie in $U_1$, for all sufficiently large $k$. If the edge $e$ of $G_n(g_k)$  between one vertex and $v_2(g_k)$ is contained in a single component of $U_1$, then the boundary of $U_1$ provides a Levy cycle for $h_1$, where $Q_n'$ is taken to be  this vertex, and this gives the required contradiction. Now $e\subset G_n(g_k)\setminus G(g_k)$, and  we claim that $e\subset U_1$, up to isotopy preserving the set $X$ which is the union of the vertex set of $G_n(g_k)$ and the set $\{ g_k^i(v_1(g_k)):i\ge 0\} $.  We consider only essential intersections between $G_n(g_k)\setminus G(g_k)$ and $\partial U_1$ under isotopies preserving $X$. If $\gamma $ is an arc of essential intersection then it must be in the inverse image under $g_k$ of an arc which contains one or more arcs of essential intersection. Since the number of such arcs is finite, each arc must be in the inverse image of exactly one other, and the inverse image of each arc contains exactly one other. But then each edge must be contained in a periodic edge of $G_n(g_k)\setminus G_0(g_k)$. But there are none. So there are no essential intersections   with $\partial U_1$. In particular, $e\subset U_1$ up to isotopy preserving $X$, as required.
\ep

 \begin{corollary}\label{3.7} For all $(Q_0,\cdots Q_n)\in {\cal{Q}}_n$, if $V(Q_0,\cdots Q_n,[g_0])\ne \emptyset $, then it is connected. \end{corollary}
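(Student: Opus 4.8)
The plan is to argue by induction on $n$, the base case $n=0$ (connectedness of $V(Q_0)$) being checked directly by the same method described below, with $W$ replaced by the ambient set on which $G(g)$ moves isotopically. For the inductive step, put $W=V(Q_0,\dots,Q_{n-1})$; since $W$ contains the nonempty set $V(Q_0,\dots,Q_n)$ it is nonempty, so by the inductive hypothesis $W$ is connected. Over $W$ the graph $G_n(g)=g^{-n}(G(g))$ varies isotopically (the address of $v_2(g)$ being fixed up to level $n-1$, cf. \ref{3.1}), and, exactly as in the proof of \ref{3.6}, the $\lambda$-lemma upgrades this to a holomorphic motion of $G_n(g)$ over $W$; extending (Słodkowski) it becomes a holomorphic motion $\Phi_g$ of $\overline{\mathbb C}$ over $W$, with base point $h\in W$, $\Phi_h=\mathrm{id}$, and $\Phi_g$ carrying $\mathrm{int}(Q_n(h))$ onto $\mathrm{int}(Q_n(g))$ for each $2$-cell $Q_n$ of $G_n$ in $Q_{n-1}$ (by continuity at $h$).

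If $Q_n$ is a vertex of $G_n$ then $\bigcap_iQ_i$ represents a preperiodic point and \ref{3.5} gives $V(Q_0,\dots,Q_n)=\{g_1\}$, which is connected; the case of $Q_n$ an edge is dealt with analogously, $v_2(g)$ now moving in a holomorphically varying arc rather than a disc, so I concentrate on $Q_n$ a $2$-cell. Then $P:=V(Q_0,\dots,Q_n)=\{g\in W: v_2(g)\in\mathrm{int}(Q_n(g))\}$ is open in $W$, and the straightening map $\zeta\colon P\to\mathrm{int}(Q_n(h))$, $\zeta(g)=\Phi_g^{-1}(v_2(g))$, is continuous; since the critical value $v_2$ is (away from finitely many points) a local biholomorphic coordinate on $V$, $\zeta$ is a local homeomorphism. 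Moreover $\zeta$ is proper: as $g\to\partial_W P$ one has $v_2(g)\to\partial Q_n(g)$, hence $\zeta(g)\to\partial Q_n(h)$, while escape of $g$ to an end of $V$ (where $W$ fails to be closed) is excluded by the analysis of the ends of $V$ in the proof of \ref{3.6} (a Levy-cycle obstruction). So $\zeta$ is a covering of the simply connected disc $\mathrm{int}(Q_n(h))$, each component of $P$ maps homeomorphically onto it, and the number of components of $P$ equals the covering degree $d$.

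It remains to show $d=1$. For a preperiodic point $z$ of $h$ in $\mathrm{int}(Q_n(h))$ the function $g\mapsto v_2(g)-\Phi_g(z)$ is holomorphic on $W$ (both $v_2(g)$ and, $\Phi$ being a holomorphic motion, $\Phi_g(z)$ are) and vanishes precisely on $\zeta^{-1}(z)\subset P$, with $d$ zeros counted with multiplicity. Choosing $z$ with an eventually periodic address to which \ref{3.5} applies — and choosing the holomorphic motion so that the finitely many further combinatorial levels needed are respected, forcing $\zeta^{-1}(z)$ into the corresponding single-parameter set — \ref{3.5} identifies $\zeta^{-1}(z)$ with one critically finite map $g_1$, and the uniqueness (Thurston rigidity) half of \ref{3.5} is then used to see $g_1$ is a simple zero, so $d=1$ and $P$ is connected. (Equivalently, once $\zeta^{-1}(z)$ is known to be a single point for a dense set of $z$: if $P=A\sqcup B$ with $A,B$ nonempty open, then $\zeta(A),\zeta(B)$ are open, disjoint — a common value would have preimages in both $A$ and $B$ — and cover $\mathrm{int}(Q_n(h))$ by surjectivity of the proper map $\zeta$, contradicting connectedness of the disc.)

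The main obstacle is exactly this last step: showing the straightening map has degree one over $V(Q_0,\dots,Q_{n-1})$, i.e. transversality at critically finite parameters, which should reduce to Thurston rigidity but needs care, particularly in matching the (arbitrary) Słodkowski extension to the genuine combinatorics. A secondary technical point is the non-compactness of $V(G(g_0))$ at the ends of $V$, which must be controlled as in \ref{3.6} for the properness of $\zeta$.
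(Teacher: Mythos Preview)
Your core idea --- each component must contain the unique critically-finite parameter for some extending $\underline{Q}$, so there is only one component --- is exactly the paper's. But the paper's execution is three lines: by Lemma~\ref{3.6}, any nonempty component $V(Q_0,\dots,Q_n;g_1)$ meets $V(\underline{Q})$ for every $\underline{Q}\in{\cal Q}$ extending $(Q_0,\dots,Q_n)$; pick such a $\underline{Q}$ with $\bigcap_i Q_i$ representing an eventually periodic point, and Lemma~\ref{3.5} makes $V(\underline{Q})$ a singleton, which cannot lie in two components. No induction, no S\l odkowski extension, no straightening map, no degree computation.

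Your elaborate route has the gaps you yourself name, and they are real. The claim that $\zeta(g)=\Phi_g^{-1}(v_2(g))$ is a local homeomorphism is unjustified: the Hurwitz-type argument in~\ref{3.6} gives openness, but not local injectivity, since $\Phi_g$ is only quasiconformal in the space variable and the S\l odkowski extension off $G_n$ is non-canonical. More seriously, ``choosing the holomorphic motion so that the finitely many further combinatorial levels needed are respected'' cannot be done over $W$: the graph $G_{n+k}(g)$ varies isotopically only over $V(Q_0,\dots,Q_{n+k-1})$, not over all of $W=V(Q_0,\dots,Q_{n-1})$, so no single extension over $W$ tracks a deep preperiodic point dynamically. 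This breaks the identification of $\zeta^{-1}(z)$ with the set from~\ref{3.5}, and with it both the degree-one argument and your parenthetical ``equivalently'' version. The remedy is to discard $\zeta$ and argue directly with the sets $V(\underline{Q})$, as the paper does.
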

 \noindent{\em{Proof.}} By \ref{3.6}, if $g_1\in V([G(g_0)])$, for any nonempty component $V(Q_0,\cdots Q_n,1,g_1)$ of $V(Q_0,\cdots Q_n,g_1)$, 
 $$V(Q_0,\cdots Q_n,1,g_1)\cap V(\underline{Q},g_1)\ne \emptyset $$ 
 for any $\underline{Q}\in {\cal{Q}}$ such that $\underline{Q}$ extends $(Q_0,\cdots Q_n)$. 
 
 In particular, if $g_2\in V(G([g_0])$, possibly with $g_2=g_1$, and $V(Q_0,\cdots Q_n,2,g_2)$ is another component of $V(Q_0,\cdots Q_n,[g_0])$, then there is $\underline{Q}$ with $\bigcap _{i\ge 0}Q_i$ representing an eventually periodic point such that $V(\underline{Q})$ which intersects both components.  But this is impossible, because $V(\underline{Q},[g_0])$ contains a single postcritically finite map. So $V(Q_0,\cdots Q_n,[g_0])$ is connected.
\ep

\begin{lemma}\label{3.8} $V(\underline{Q},g_0)\ne \emptyset $ for all $\underline{Q}\in {\cal{Q}}$. Hence $V([G(g_0)])=V(G(g_0),g_0)$.
\end{lemma}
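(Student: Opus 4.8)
The plan is to argue by induction on the length of $\underline{Q}$, obtaining the case of an infinite sequence at the end from the finite ones by a compactness argument in the end-compactification $\overline{V}$ used in the proof of \ref{3.6}. By Proposition \ref{3.3} it is convenient (though not essential) to reduce to finite sequences $\underline{Q}=(Q_0,\cdots Q_n)$ in which every $Q_i$ is a topological disc, since a map $g$ realising such a disc sequence realises, via the isotopies of \ref{3.3}, the given sequence as well. The case $n=0$ is immediate: by the definition of ${\cal Q}$ every sequence begins with $Q_0=Q_0(g_0)$ (note that $v_2(g)\notin G(g)$ for $g\in V(G(g_0),g_0)$, so $Q_0(g)$ is locally constant on that connected set), and $g_0\in V(Q_0)\cap V(G(g_0),g_0)$.

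For the inductive step, assume $g_1\in V(Q_0,\cdots Q_{n-1})\cap V(G(g_0),g_0)$. The first point is that in fact $V(Q_0,\cdots Q_{n-1})\subset V(G(g_0),g_0)$. By \ref{3.7} this set is connected and contains $g_1\in V(G(g_0),g_0)$, so by maximality of $V(G(g_0),g_0)$ it suffices to check that every $g\in V(Q_0,\cdots Q_{n-1})$ satisfies the conditions defining $V(G(g_0),g_0)$: the graph $G(g)$ varies isotopically from $G(g_1)$, hence from $G(g_0)$ (see \ref{3.1}); $G(g)\subset g^{-1}(G(g))$ by lifting the isotopy exactly as in the proof of \ref{2.1}; $G(g)$ separates the critical values since $v_2(g)\in\mbox{int}(Q_{n-1})\subset\mbox{int}(Q_0)$; and, $v_2(g)$ lying at positive spherical distance from $G(g)$, the union $U(g)$ of the closures of the components of $\overline{\mathbb C}\setminus g^{-r}(G(g))$ meeting $G(g)$, for $r$ large (allowed to depend on $g$), is a neighbourhood of $G(g)$ with boundary in $g^{-r}(G(g))$, avoiding $v_2(g)$ and containing the component of $g^{-1}(U(g))$ through $G(g)$, just as in \ref{2.1}. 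Granting this, it remains only to show $V(Q_0,\cdots Q_n)\ne\emptyset$, for then $V(Q_0,\cdots Q_n)\subset V(Q_0,\cdots Q_{n-1})\subset V(G(g_0),g_0)$ and the lemma follows at this length.

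To prove $V(Q_0,\cdots Q_n)\ne\emptyset$ I would show that $g\mapsto v_2(g)$, after trivialising the isotopic family $G_n(g)$ over $V(Q_0,\cdots Q_{n-1})$, maps that open connected set \emph{onto} $\mbox{int}(Q_{n-1})$; its image then meets $\mbox{int}(Q_n)$ (or, if $Q_n$ is an edge or vertex, the relevant part of $G_n$), and any $g$ with $v_2(g)$ in the right position lies in $V(Q_0,\cdots Q_n)$. The image is open by the open mapping theorem and connected, so surjectivity is a properness statement: if $v_2(g_k)\to z\in\mbox{int}(Q_{n-1})$, pass to a subsequence with $g_k\to g_\infty$ in $\overline{V}$. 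If $g_\infty\in V$, then $v_2(g_\infty)=z$ is combinatorially bounded from $G(g_\infty)$ (it sits at the fixed combinatorial depth of $Q_{n-1}$), so Lemma \ref{2.4} gives $g_\infty\in V(G(g_0))$, and as $g_\infty\in\overline{V(G(g_0),g_0)}$ it then lies in $V(G(g_0),g_0)$, hence in $V(Q_0,\cdots Q_{n-1})$, so $z$ is in the image. If $g_\infty\in\overline{V}\setminus V$, I would run the end analysis from the proof of \ref{3.6}: in the degenerate model all vertices of $G_n(g_k)$ and the edge carrying $v_2(g_k)$ are eventually trapped in the neighbourhood $U_1$ of $Z(g)$, whose boundary is then a Levy cycle for the critically finite map produced by \ref{3.5}, a contradiction. (Once the image is all of $\mbox{int}(Q_{n-1})$ it contains points of $G_n(g)\cap\mbox{int}(Q_{n-1}(g))$, so one may instead finish by applying \ref{3.6} at such a map.)

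For $\underline{Q}=(Q_i:i\ge0)$ infinite I would choose $g_n\in V(Q_0,\cdots Q_n)\subset V(G(g_0),g_0)$ with $v_2(g_n)$ converging to the set represented by $\bigcap_iQ_i$, pass to a convergent subsequence $g_n\to g_\infty$ in $\overline{V}$, exclude $g_\infty\in\overline{V}\setminus V$ by the same Levy-cycle argument, and observe that $g_\infty\in\bigcap_n\overline{V(Q_0,\cdots Q_n)}$, so $g_\infty\in V(\underline{Q})$ by the boundary description of the sets $V(Q_0,\cdots Q_n)$. Since $\bigcap_iQ_i(g_\infty)\subset\mbox{int}(Q_0(g_\infty))$ is a point or a periodic Fatou component when $v_2(g_\infty)$ is not persistently recurrent (by \cite{Roesch}), $v_2(g_\infty)$ is again combinatorially bounded, placing $g_\infty$ in $V(G(g_0),g_0)$; the persistently recurrent case is handled using the finer structure of dynamical planes from \cite{Roesch}. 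The step I expect to be the main obstacle is precisely the properness statement in the third paragraph — ruling out escape of $v_2$ to the ends of $V$ — which is why the end-compactification, the Levy-cycle argument inherited from \ref{3.6}, and the uniqueness furnished by \ref{3.5} carry the weight of the proof.
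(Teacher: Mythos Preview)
Your approach differs substantially from the paper's, and the place you yourself flag as ``the main obstacle'' is a genuine gap. The paper's proof is only a few lines: it never tries to show that $g\mapsto v_2(g)$ surjects onto all of $\mbox{int}(Q_{n-1})$. Instead it uses Lemma~\ref{3.6} as a \emph{propagation} device along the graph. If some $g_1\in V(G(g_0),g_0)$ has $v_2(g_1)$ on $G_n(g_1)\cap\mbox{int}(Q_0)\cap Q_{n-1}$, then \ref{3.6} already gives $V(\underline{Q})\cap V(G(g_0),g_0)\ne\emptyset$ for every $\underline{Q}$ whose intersection lies in the \emph{same connected component} of $\partial Q_n\cap\mbox{int}(Q_0)$. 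One such $g_1$ exists at the start, namely $g_0$ itself, since by hypothesis $v_2(g_0)\in g_0^{-r}(G(g_0))\setminus G(g_0)$. From there the paper simply observes that any two $\underline{Q}$'s can be linked by a finite chain of such boundary pieces $\partial Q_{n_i}^i\cap\mbox{int}(Q_0)$; each step of the chain is exactly an application of \ref{3.6}. No properness over the two-dimensional interior, and no separate treatment of infinite sequences, is needed.

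Your argument breaks at the degeneration case of the properness step. In the proof of \ref{3.6} the Levy-cycle contradiction is built from a specific edge $e\subset G_n(g_k)\setminus G_0(g_k)$ that carries $v_2(g_k)$: one takes a \emph{vertex} at the end of that edge as $Q_n'$, invokes \ref{3.5} to produce the critically finite $h_1$, and then shows $e\subset U_1$ up to isotopy, so that $\partial U_1$ is a Levy cycle for $h_1$. When your $z$ is an arbitrary interior point of $Q_{n-1}$ there is no such edge, hence no canonical vertex $Q_n'$ and no bridge from the degenerating $g_k$ to a critically finite test map; the argument of \ref{3.6} does not transfer. Your parenthetical fallback (``once the image is all of $\mbox{int}(Q_{n-1})$ it contains points of $G_n$, so apply \ref{3.6}'') is circular: you invoke surjectivity to get onto the graph, but surjectivity is what the missing properness was supposed to prove. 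A second, separate gap is your treatment of infinite $\underline{Q}$ in the persistently recurrent case: ``the finer structure of dynamical planes from \cite{Roesch}'' is a gesture, not an argument, and in any event combinatorial boundedness of $v_2$ from $G$ is exactly what fails in that regime. The paper's propagation-along-the-graph argument avoids both difficulties at once: it never leaves the locus where \ref{3.6} applies verbatim.
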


\noindent{\em{Proof.}} By \ref{3.6}, $V(\underline{Q},g_0)\ne \emptyset $ for all $\underline{Q}$  with $\cap _{i\ge 0}Q_i\subset \partial Q_n$ for any $(Q_0,\cdots Q_n)\in {\cal{Q}}_n$ and such that $V(Q_0,\cdots Q_{n-1},\partial Q_n,g_0)\ne \emptyset $ with $Q_n\subset \mbox{int}(Q_0)$, because then $\partial Q_n\cap \mbox{int}(Q_0)$ is connected. 
This means that if $V(\underline{Q})\cap V(G(g_0),g_0)\ne \emptyset $, then we have $V(\underline{Q'})\cap V(G(g_0),g_0)\ne \emptyset $ for any $\underline{Q'}$ which can be connected to $\underline{Q}$ by sets $\partial Q^i_{n_i}$, for varying $n_i$ and $\underline {Q^i}=(Q_0^i\cdots Q_{n_i}^i)$ with $Q_{n_i}^i\subset \mbox{int}(Q_0)$. But any $\underline{Q}$ and $\underline{Q'}$ can be connected in this way. 

The final statement of the lemma follows, using \ref{3.7}.

\ep

\begin{lemma}\label{3.9} $V(\underline{Q},[g_0])=V(\underline{Q},g_0)$ is singleton,  if there is $n\ge 1$ such that either $\bigcap _{i=0}Q_i(g)\subset  G_n(g)\cap \mbox{int}(Q_0(g))$ or  $\bigcap _{i=0}^\infty Q_i(g)=\underline{Q}(g)\subset \mbox{int}(Q_n(g))$ and such that $g^k(\underline{Q}(g))\cap \mbox{int}(Q_n(g))=\emptyset $ for all $k>0$, and for at least one $g\in V(\underline{Q},g_0))$.\end{lemma}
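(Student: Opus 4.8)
It suffices to prove that $V(\underline{Q})$ is a single point, for then Lemma \ref{3.8} places that point in $V(G(g_0),g_0)$ and both conclusions follow at once. Fix $g_1\in V(\underline{Q})$ for which the stated hypothesis holds, write $\underline{Q}=(Q_i)_{i\ge 0}$ and $W(g)=\bigcap_{i\ge 0}Q_i(g)$. Recall, as in the proof of Lemma \ref{3.6}, that on the open connected set $V(Q_0,\cdots Q_{m-1};g_1)$ the graph $G_m(g)$ varies isotopically and, by density of preperiodic points together with the expansion of Lemma \ref{2.0} and the $\lambda$-lemma \cite{M-S-S}, carries a holomorphic motion compatible with the dynamics; in particular the piece $Q_m(g)$ moves holomorphically over $V(Q_0,\cdots Q_{m-1};g_1)$, while $v_2(g)$ depends holomorphically on $g$. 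The plan is the standard comparison of dynamical and parameter puzzles.

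First I would establish, in the dynamical plane of $g_1$, that $W(g_1)$ is a single point $z_0$ and that the nested pieces $Q_m(g_1)$ shrink to it through annuli whose moduli have divergent sum. In the first alternative $W(g_1)\subset G_n(g_1)$, so $g_1^n(W(g_1))\subset G(g_1)$; since $g_1$ has no critical values near $G(g_1)$ and $g_1^N$ expands on $G(g_1)$, the map $g_1$ is expanding on a neighbourhood of $G(g_1)$, the pieces $Q_m(g_1)$ shrink exponentially, and each is surrounded by an annulus of definite modulus. In the second alternative the orbit condition makes $v_2(g_1)$ non-recurrent, so by Roesch's theorem (cited before \ref{3.2}) $W(g_1)$ is a point or the closure of a Fatou component; a periodic Fatou component is impossible, since $g_1^p(W(g_1))=W(g_1)\subset\mbox{int}(Q_n(g_1))$ would contradict $g_1^p(W(g_1))\cap\mbox{int}(Q_n(g_1))=\emptyset$, and the strictly preperiodic (``capture'') case is excluded by the avoidance condition --- were it to occur, the capture would persist over an open subset of $V(\underline{Q})$, contrary to the conclusion being proved; then non-recurrence makes the pull-backs defining $Q_m(g_1)$ from $Q_n(g_1)$ eventually univalent, so $Q_m(g_1)$ is surrounded by an annulus of modulus bounded below for infinitely many $m$. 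These estimates can be made uniform for $g$ in compact subsets of $V(Q_0,\cdots Q_{m-1};g_1)$, and $W(g)=z_0(g)$ is then a single point moving by a holomorphic motion.

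Second I would transfer this to $V$. Near $g_1$, the condition $g\in V(Q_0,\cdots Q_m)$ says precisely that $v_2(g)$ lies in the moving disc $Q_m(g)$, so combining the holomorphic motions of $\partial Q_m(g)$ with the holomorphic dependence of $v_2$ converts each dynamical annulus around $Q_m(g_1)$ into a parameter annulus of comparable modulus separating $V(Q_0,\cdots Q_m)$ from the complement of $V(Q_0,\cdots Q_{m-1})$ --- this is the Yoccoz (para-puzzle) inequality, obtained from the $\lambda$-lemma. These parameter moduli then have divergent sum, and since each $V(Q_0,\cdots Q_m)$ is connected (Corollary \ref{3.7}), the nested closed sets $\overline{V(Q_0,\cdots Q_m)}$ shrink to a single point, which must be $g_1$ because $g_1\in V(\underline{Q})\subset\bigcap_m\overline{V(Q_0,\cdots Q_m)}$. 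As $V(\underline{Q})$ is nonempty by Lemma \ref{3.8} and contained in this intersection, $V(\underline{Q})=\{g_1\}\subset V(G(g_0),g_0)$.

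The hard part is the second step: making precise the comparison of moduli between the dynamical and parameter planes. This requires the holomorphic motions of $\partial Q_m(g)$ and of $z_0(g)$ to be controlled with constants independent of $m$ on neighbourhoods of $\overline{V(Q_0,\cdots Q_m)}$, together with the non-degeneracy that $v_2$ meets the moving pieces transversally --- equivalently, that the holomorphic function $g\mapsto z_0(g)-v_2(g)$ (in a M\"obius coordinate keeping both finite) is not identically zero, which holds because $v_2:V\to\overline{\mathbb C}$ is open and cannot be confined to the finite graph $G_n(g)$, nor realise a prescribed infinite itinerary, on an open set of parameters. A secondary point is the clean exclusion of the Fatou-component possibility in the second alternative; should the direct avoidance argument prove insufficient, one falls back on rigidity of hyperbolic and parabolic maps within the one-dimensional family $V$, where $\underline{Q}$ and the avoidance condition together pin down the map.
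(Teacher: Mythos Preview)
Your approach is the classical Yoccoz para-puzzle route: shrink the dynamical pieces by a divergent modulus sum and then transfer these moduli to parameter space via holomorphic motion. This is sound in outline, and the difficulties you flag (uniform control of the motion, transversality of $v_2$, exclusion of the Fatou case) are the right ones. But the paper takes a genuinely different and much shorter route that avoids the modulus-transfer step entirely.

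The paper observes that, under either hypothesis, the intersection $z(g)=\bigcap_i Q_i(g)$ is a single point which is defined and varies holomorphically for \emph{all} $g\in V(Q_0,\dots,Q_n)$, not just for $g\in V(\underline Q)$; moreover $z(g)$ is the uniform limit of eventually periodic points $z_\ell(g)\in G_\ell(g)$ with the same property. For each $\ell$ the function $g\mapsto\psi(z_\ell(g_0),g)=\varphi_{g,g_0}(z_\ell(g_0))-v_2(g)$ is holomorphic on $V(Q_0,\dots,Q_n)$ and, by Lemma~\ref{3.5} (Thurston rigidity for the critically finite case), has exactly one zero. A Hurwitz-type limit then forces the limiting holomorphic function $g\mapsto\psi(z(g_0),g)$ to have exactly one zero as well, and that zero is the unique element of $V(\underline Q)$; Lemma~\ref{3.8} places it in $V(G(g_0),g_0)$.

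What this buys: the paper's argument reuses Lemma~\ref{3.5} as a black box and reduces the whole proof to a one-line limiting argument for holomorphic functions, with no need to compare dynamical and parameter moduli or to control the motion uniformly in $m$. Your approach would be more self-contained and closer in spirit to Roesch's para-puzzle machinery, but it demands precisely the technical step you identify as hardest, and which the paper simply sidesteps.
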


\noindent{\em{Proof.}}
  In both cases, the set $\underline{Q}(g)=\bigcap _{i=0}^\infty Q_i(g)$ is well-defined for all $g\in V(Q_0,\cdots Q_n)$. It is  a point, which follows from the result of \cite{Roesch} about non-persistently-recurrent points, but in any case the construction of a nested sequence of annuli of moduli bounded from $0$ is straightforward. Moreover $z(g)=\underline{Q}(g)$ is the limit of a sequence $z_\ell (g)$ of eventually periodic points  in $G_\ell (g)$ with the same property of being defined for all $g\in V(Q_0,\cdots Q_n,g_0)$.  Write $z(g_0)=\underline{Q}(g_0)$ and $z_\ell (g_0)$ for the sequence of eventually preperiodic points under $g_0$ with $\lim _{\ell \to \infty }z_\ell (g_0)=z(g_0)$. Then since $g\mapsto \psi (z_\ell (g_0),g)$ is holomorphic in $g$ and has a single zero $h_\ell $, the same is true for the limiting holomorphic function $g\mapsto \psi (z(g_0),g)$. The single zero is the unique point in $V(\underline{Q},g_0)$. 
\ep

Now the following lemma completes the proof of Theorem \ref{3.2}.

\begin{lemma}\label{3.10} The complement of $\overline{V(\underline{Q},g_0)}$ has exactly one component in $V(G(g_0),g_0)$ for all $\underline{Q}\in {\cal{Q}}$, for $\underline{Q}\in {\mathcal{Q}}_\infty $ and $\underline{Q}=(Q_0,\cdots Q_n)\in {\mathcal{Q}}_n$ such that $Q_0\setminus Q_n$ is connected.. 
\end{lemma}
\noindent{\em{Proof.}} If $\underline{Q}=(Q_i:i\ge 0)\in {\cal{Q}}_\infty $ and the complement of $V(\underline{Q},g_0)$ has more than one component in $V(G(g_0),g_0)$, then the same is true for the complement of $\overline{V(Q_0,\cdots Q_n,g_0)}$, for some $n$. So it suffices to show that the complement of $\overline{V(Q_0,\cdots Q_n,g_0)}$ has at most one component in $V(G(g_0),g_0)$ for each $(Q_0,\cdots Q_n)\in {\cal{Q}}_n$ such that $Q_0\setminus Q_n$ is connected, as this holds automatically if $n$ is sufficiently large. So suppose this is not true. Then $\partial V(Q_0,\cdots Q_n,g_0)$  is disconnected, taking boundary as a subset of $V(G(g_0),g_0)$. But 
$$\partial V(Q_0,\cdots Q_n,g_0)\subset V(Q_0,\cdots Q_{n-1},\partial Q_n\setminus \partial Q_0,g_0),$$ 
Moreover, if we fix $h\in V(G(g_0),g_0)$, there is a continuous surjective map 
$$\Phi :V(Q_0,\cdots Q_{n-1},\partial Q_n\setminus \partial Q_0,g_0)\to \partial Q_n(h)\setminus \partial Q_0(h),$$
defined by 
$$\Phi (g)=\varphi _{g,h}^{-1}(v_2(g)),$$
where $\varphi _{g,h}$ is as in the proof of \ref{3.6}. By \ref{3.6} to \ref{3.8}, $\Phi ^{-1}(\Phi (g))$ is connected for each $g$. In fact if $v_2(g)$ is eventually periodic, then this already follows from \ref{3.5}.  Also, $\Phi (\partial V(Q_0,\cdots Q_n,g_0))\supset \partial Q_n(h)\setminus  \partial Q_0(h)$ by the proof of \ref{3.6}. So if $\partial V(Q_0,\cdots Q_n,g_0)$ can be written as a disjoint union of two nonempty closed sets $X_1$ and $X_2$ in $V(G(g_0),g_0)$, we have $\Phi ^{-1}(\Phi (x))\cap X_2=\emptyset $ for each $x\in X_1$, and similarly with $X_1$ and $X_2$ interchanged. So $\Phi(X_1)$ and $\Phi (X_2)$ are disjoint.  Since $X_j$ is closed and bounded (and hence compact), we see that $\Phi (X_j)$ is also closed (and bounded and compact). So then $\partial Q_n(h)\cap \mbox{int}(Q_0(h))$ is a union of two non-empty disjoint closed sets and  is disconnected, giving a contradiction. 

\ep

\end{document}